\DeclareMathOperator{\Ima}{im}
\newcommand{\bb}{\mathbb}
\newcommand{\lp}{\ell}
\newcommand{\G}{G(V)}
\newcommand{\Po}{\bb{P}_{0}}
\newcommand{\ov}{\overline}
\theoremstyle{plain}
\newtheorem{theorem}{Theorem}
\newtheorem{lemma}[theorem]{Lemma}
\newtheorem{defi}[theorem]{Definition}
\newtheorem*{theorem*}{Theorem}
\theoremstyle{remark} \newtheorem*{remark}{Remark}
\theoremstyle{remark} \newtheorem{example}{Example}
\def\paragraph{\@startsection{paragraph}{4}%
  \z@\z@{-\fontdimen2\font}%
  {\normalfont\bfseries}}
\begin{document}
\title{Smith and Critical groups of Polar graphs.}
\date{}
\author{Venkata Raghu Tej Pantangi}
\email{pvrt1990@ufl.edu}
\author{Peter Sin.}
\email{sin@ufl.edu}
\address{Department of Mathematics, University of Florida, Gainesville, FL 32611-8105, USA.}
\thanks{This work was partially supported by a grant from the Simons Foundation (\#204181 to Peter Sin).}
\keywords{invariant factors, elementary divisors, Smith normal form, critical group, sandpile group, adjacency matrix, Laplacian, Polar graph.
}
\begin{abstract}
We compute the elementary divisors of the adjacency and Laplacian matrices of families of polar graphs. These graphs have as vertices the isotropic one-dimensional subspaces of finite vector spaces with respect to non-degenerate forms, with adjacency given by orthogonality. 
\end{abstract}

\maketitle

\section{Introduction}
Let $\Gamma=(\tilde{V},\tilde{E})$ be an undirected simple connected graph. Let $A$ be the adjacency matrix of $\Gamma$ with respect to some arbitrary ordering of $\tilde{V}$. Let $D$ be the diagonal matrix with $D_{ii}$ being the degree of the $i$th vertex of $\Gamma$. Then $L:=D-A$ is called the Laplacian matrix of $\Gamma$. Multiplication by $A$ and $L$ are endomorphisms of $\bb{Z}{\tilde{V}}$, the free $\bb{Z}$ module with $\tilde{V}$ as a basis set. The cokernel of $A$ is called the \emph{Smith group} $S(\Gamma)$ of $\Gamma$. The finite part of cokernel of $L$ is called the \emph{critical group} $K(\Gamma)$ of $\Gamma$. As a consequence of Kirchhoff's Matrix Tree Theorem, the order of $K(\Gamma)$ is equal to the number of spanning trees of $\Gamma$ (cf. \cite{Sta}). The critical groups of various graphs arise in combinatorics in the context of chip firing games (cf. \cite{Biggs}), as the abelian sandpile group in statistical mechanics (cf. \cite{Dhar}), and also in arithmetic geometry. One may refer to \cite{Lor1} for a discussion on these connections. It is therefore of some interest to compute the Smith groups and critical groups of graphs.

In this paper, we calculate the Smith groups and critical groups of families of {\it polar graphs}. 
 These graphs are strongly regular graphs (SRGs) associated with finite classical polar spaces. Given a finite classical polar space $\mathcal{P}$ associated with a quadratic, symplectic or hermitian space $V$, the polar graph $\Gamma(V)$ is the graph whose vertex set is the set $\Po$ of points of  $\mathcal{P}$ (isotropic $1$-dimensional subspaces of $V$) and  whose adjacency is defined by orthogonality with respect to the underlying form. We have six infinite families of polar graphs arising from the six families of finite classical polar spaces.  
 Throughout the paper the Smith and critical groups of $\Gamma(V)$ will be denoted by $S$ and $K$ respectively. The computation of $S$ and $K$ is equivalent 
to finding the elementary divisors of $A$ and $L$, so can be carried out one prime at a time. The group $\G$ of form-preserving isomorphisms on $V$ acts a group of automorphisms of $\Gamma(V)$. We use properties of strongly regular graphs and modular representation theory of $\G$ to compute $S$ and $K$. This methodology is an extension of the methodologies used for other computations in the literature, as found in \cite{SDB}, \cite{CSX}, \cite{S} and \cite{SD}.  

Our results giving the elementary divisors of $S$ and $K$ for every polar graph
are stated in \S\ref{main_results}. The structures of $K$ and $S$ depend on the
relationship of primes to various combinatorial parameters of $\Gamma(V)$
which we treat first in \S\ref{nota}. Then in \S\ref{main_results} the 
multiplicity of elementary divisors are given in twelve tables, corresponding
to $S$ and $K$ for each of the six families of polar spaces.

We now describe our approach in more detail.
Let us consider a matrix $X \in M_{n \times n}(\bb{Z})$, a prime $\lp$, and a positive integer $a\in \bb{Z}_{\geq 0}$. By $e_{0}$ we denote the $\lp$-rank of $X$ and for $a\neq 0$ let $e_{a}$ be the multiplicity of $\lp^{a}$ as an elementary divisor of the finite part of $\bb{Z}^n/ X\left(\bb{Z}^n\right)$. Let $\bb{Z}_{\lp}$ be the ring of $\lp$-adic integers and $\bb{F}_{\lp}=\bb{Z}_{\lp}/ \lp \bb{Z}_{\lp}$ the field of $\ell$-elements. 
 In \S\ref{snf}, we establish some general results on elementary divisors by relating them to
the $\bb{Z}_{\lp}$-modules $M_{a}:=\{y \in \bb{Z}_{\lp}^{n}| Xy \in \lp^{a}\bb{Z}_{\lp}^{n}\}$ and the $\bb{F}_{\lp}$-modules $\ov{M}_{a}:= (M_{a}+\lp \bb{Z}_{\lp}^{n})/ \lp \bb{Z}_{\lp}^{n}$. By the structure theorem
for finitely generated modules over a principal ideal domain, we have $e_{a}= \dim(\ov{M}_{a}/ \ov{M}_{a+1})$. 

Suppose $X$ is now either the adjacency matrix $A$ or the Laplacian matrix $L$ of $\Gamma(V)$. As $\Gamma(V)$ is a strongly regular graph, $X$ has three distinct eigenvalues. When a prime $\lp$ divides at most two of the three eigenvalues,  the reduction $\ov{X}_{\lp}$ of $X$ modulo $\lp$ has non-zero eigenvalues. If $\lp$ divides all three eigenvalues, then $\ov{X}_{\lp}$ is nilpotent. The division into the nilpotent and non-nilpotent cases
is an important one from a technical point of view, and is also reflected in structure of $S$ and $K$. 
The nilpotence of $\ov{X}_{\lp}$ is characterized by a few arithmetic conditions together with the geometry on $V$. Table \ref{nilconst} of \S\ref{primes} encodes this characterization. 

The non-nilpotent case turns out to be much easier to handle
than the nilpotent case; general properties of strongly regular graphs and elementary linear algebra suffice.  In \S\ref{easy} we compute the $\lp$-elementary divisors of $S$ and $K$ when $\ov{X}_{\lp}$ is not nilpotent.  In this case, we  show that each $\ov{M_{i}}$ is either $\Ima(\ov{X}_{\lp}-\ov{\alpha} I)$ or $\ov{\ker(X-\alpha I)}$ for some eigenvalue $\alpha$ of $X$. 

When  $\ov{X}_{\lp}$ is nilpotent, we use more representation theory.
As $\G$ preserves adjacency, $X$ may be viewed as a $\bb{Z}_{\lp}\G$-endomorphism of the $\bb{Z}_{\lp}\G$-permutation module $\bb{Z}_{\lp}\Po$, and the modules $\ov{M}_{a}$ introduced above are $\bb{F}_{\lp}\G$-submodules of the $\bb{F}_{\lp}\G$-permutation module
$\bb{F}_{\lp}\Po$. 
The action of $\G$ on $\Po$ has permutation rank $3$. The submodule structure of the permutation module $\bb{F}_{\lp}\Po$ has been determined in \cite{Lie1}, \cite{Lie2}, \cite{LST}, and \cite{ST} in {\it cross-characteristics}, that is, when
$\lp$ is not equal to the characteristic of the underlying field of the polar space. 
From the submodule structures determined in \cite{Lie1}, \cite{Lie2}, \cite{LST}, and \cite{ST} it can be seen that the length of $\bb{F}_{\lp}\Po$ is at most $6$. Thus we expect at most five  distinct $\lp$-elementary divisors for $S$ and $K$.  Using the information
about submodules we locate the submodules $\ov{M}_{i}$ in the submodule lattice and thereby determine the dimensions of the submodules $\ov{M}_{i}$. This detailed analysis, which forms the bulk of the paper, is carried out in \S\ref{act} to \S\ref{last}. The identification of the submodules $\ov{M}_{i}$ of $\bb{F}_{\lp}\Po$ is a further refinement of
the detailed picture of the submodule structure of permutation modules for classical groups, and may be of independent interest.

\section{Definitions and Notation}\label{nota}
\begin{defi}
A strongly regular graph (SRG) with parameters $(\tilde{v},\tilde{k},\tilde{\lambda},\tilde{\mu})$ is a $\tilde{k}$-regular graph on $\tilde{v}$ vertices such that:
\begin{enumerate}
\item Any two adjacent vertices have $\tilde{\lambda}$ neighbours;
\item Any two non-adjacent vertices have $\tilde{\mu}$ neighbours.
\end{enumerate}
\end{defi}

Let $q=p^{t}$ be a power of a prime, and let $\bb{F}_{q}$, $\bb{F}_{q^{2}}$ be finite fields of order $q$, and $q^{2}$ respectively. Let $V$ be either a vector space over $\bb{F}_{q}$ endowed with a non-degenerate symplectic form, a quadratic form, or a vector space over $\bb{F}_{q^{2}}$ carrying a non-degenerate Hermitian form. By $\tilde{q}$, we denote the size of the underlying field associated with $V$. We note that $\tilde{q}=q^{2}$ in the Hermitian case and is $q$ in the other cases.

Let $\Po$ be the set of all singular $1$-spaces in $V$. Given two distinct $\mathbf{v},\mathbf{u} \in \Po$, we say $\mathbf{v} \sim \mathbf{u}$ if and only if $\mathbf{v}$ and  $\mathbf{u}$ are orthogonal. Let $\Gamma(V)$ be the graph on $\Po$, whose adjacency is defined by the relation $\sim$.
Based on the parity of $\dim(V)$ and the geometry on $V$, we classify $\Gamma(V)$ into six families. We associate a parameter $h \in\{0,\frac{1}{2},1,\frac{3}{2},2\}$ with each family. Another parameter associated with each family is the dimension of the maximal totally isotropic subspace of $V$, denoted by $z$. The graph $\Gamma(V)$ is a graph on $(\tilde{q}^{z-1+h}+1)\dfrac{\tilde{q}^{z}-1}{\tilde{q}-1}$ vertices, where $\tilde{q}$ is the size of the underlying field. Any strongly regular $\Gamma(V)$ is isomorphic to one of the following graphs.   
\begin{enumerate}
\item When $V=\bb{F}_{q}^{2m}$ is a symplectic space (with $m\geq 2$), we denote $\Gamma(V)$ by $\Gamma_{s}(q,m)$. In this case $h=1$, $z=m$, and $\mathrm{Sp}(2m,q)\leq Aut(\Gamma_{s}(q,m))$. 
\item When $V= \bb{F}_{q}^{2m+1}$ is an endowed with a non-degenerate quadratic form  (with $m\geq 2$), we denote $\Gamma(V)$ by $\Gamma_{o}(q,m)$. In this case $h=1$, $z=m$, and $\mathrm{O}(2m,q) \leq Aut(\Gamma_{o}(q,m))$. 
\item When $V=\bb{F}_{q}^{2m}$  (with $m\geq 3$) is endowed with a non-degenerate elliptic quadratic form, we denote $\Gamma(V)$ by $\Gamma_{o-}(q,m)$. In this case $h=2$, $z=m-1$, and $\mathrm{O}^{-}(2m,q) \leq Aut(\Gamma_{o-}(q,m))$. 
 \item When $V=\bb{F}_{q}^{2m}$ (with $m\geq 3$) is endowed with a non-degenerate hyperbolic quadratic form, we denote $\Gamma(V)$ by $\Gamma_{o+}(q,m)$. In this case $h=0$, $z=m$, and $\mathrm{O}^{+}(2m,q) \leq Aut(\Gamma_{o+}(q,m))$.  
\item When $V=\bb{F}_{q^{2}}^{2m}$ (with $m\geq 2$) is endowed with a non-degenerate Hermitian quadratic form, we denote $\Gamma(V)$ by $\Gamma_{ue}(q,m)$. In this case $h=\frac{1}{2}$, $z=m$, and $\mathrm{U}(2m,q^2)\leq Aut(\Gamma_{ue}(q,m))$. 
 \item When $V=\bb{F}_{q^{2}}^{2m+1}$ (with $m\geq 2$) is endowed with a non-degenerate Hermitian quadratic form, we denote $\Gamma(V)$ by $\Gamma_{uo}(q,m)$. In this case $h=\frac{3}{2}$, $z=m$, and $\mathrm{U}(2m+1,q^2) \leq Aut(\Gamma_{ue}(q,m))$. 
\end{enumerate} 
From now on, we assume that $\Gamma(V)$ is one of the six graphs described above.
A \emph{polar graph} is a graph of the form $\Gamma(V)$.
By $\G$, we denote the group of form-preserving automorphisms of $V$. For example when $\Gamma(V)=\Gamma_{s}(q,m)$, we have $\G =\mathrm{Sp}(2m,q)$. 

We denote the number of $j$-dimensional subspaces of $\bb{F}_{q}^{d}$, by ${d \brack j}_{q}$. By standard counting arguments we have ${d \brack j}_{q} =\prod_{i=1}^{j} \frac{q^{d-i+1}-1}{q-1}$. Note that $q$ may be replaced by a variable $\mathfrak{z}$ to define ${d \brack j}_{\mathfrak{z}} :=\prod_{i=1}^{j} \frac{\mathfrak{z}^{d-i+1}-1}{\mathfrak{z}-1}$. 

By standard arguments (cf. \cite{BCN} \S 9.5), we can deduce the following result.
\begin{lemma}\label{par}
 The graph $\Gamma(V)$ is a strongly regular graph with parameters 
$$\left(
\begin{aligned}
v&:=(\tilde{q}^{z-1+h}+1){z \brack 1}_{\tilde{q}},\quad  k:=\tilde{q}{z-1 \brack 1}_{\tilde{q}}(\tilde{q}^{z-2+h}+1),\\
\lambda&:=(\tilde{q}-1)+\tilde{q}^{2}(\tilde{q}^{z-3+h}+1){z-2 \brack 1}_{\tilde{q}},\quad  \mu:=\frac{k}{\tilde{q}}
\end{aligned} \right).
$$  Here $\tilde{q}=q^{2}$ when $\Gamma(V)$ is either $\Gamma_{ue}(q,m)$ or $\Gamma_{uo}(q,m)$; and $\tilde{q}=q$ in other cases.  
\end{lemma}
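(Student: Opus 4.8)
The plan is to verify the four strongly regular parameters $(v,k,\lambda,\mu)$ directly by counting, exploiting the transitivity of the polar space geometry on points, on ordered pairs of orthogonal points, and on ordered pairs of non-orthogonal points. First I would establish the vertex count $v$. The vertices are the singular (isotropic) $1$-spaces $\Po$ of $V$; the maximal totally isotropic subspaces have dimension $z$, and every finite classical polar space of rank $z$ over a field of size $\tilde q$ has exactly $(\tilde q^{z-1+h}+1)\,{z \brack 1}_{\tilde q}$ points, where the factor $\tilde q^{z-1+h}+1$ is the number of singular points through a fixed singular point in a hyperbolic line plus one, encoding the type of the space via $h$. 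I would quote this standard count of points of a polar space (cf.\ \cite{BCN} \S 9.5), matching the parameter $h$ to each of the six families as tabulated in \S\ref{nota}.

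Next I would compute the valency $k$. Since $\G$ acts transitively on $\Po$, fix a singular point $\mathbf v$; its neighbours are the singular points $\mathbf u \neq \mathbf v$ with $\mathbf u \perp \mathbf v$, i.e.\ the singular points in $\mathbf v^{\perp}$ other than $\mathbf v$ itself. The quotient $\mathbf v^{\perp}/\mathbf v$ carries a non-degenerate form of the same type but rank one lower (rank $z-1$), so the singular points of $\mathbf v^{\perp}$ distinct from $\mathbf v$ are counted by combining the points of this rank-$(z-1)$ residual polar space with the points of $\mathbf v^{\perp}$ collinear lines through $\mathbf v$; carrying out this count yields $k=\tilde q\,{z-1 \brack 1}_{\tilde q}(\tilde q^{z-2+h}+1)$. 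The two intersection numbers then follow in the same spirit: $\lambda$ counts common neighbours of two adjacent (orthogonal) singular points $\mathbf v,\mathbf u$, which reduces to counting singular points in the rank-$(z-2)$ residual space $(\langle\mathbf v,\mathbf u\rangle)^{\perp}/\langle\mathbf v,\mathbf u\rangle$ together with the points on the singular line $\langle\mathbf v,\mathbf u\rangle$, giving $\lambda=(\tilde q-1)+\tilde q^{2}(\tilde q^{z-3+h}+1){z-2 \brack 1}_{\tilde q}$; and $\mu$ counts common neighbours of two non-adjacent points, which a short computation identifies with $k/\tilde q$.

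The main obstacle is organising the residual-space counts uniformly across all six families so that the single parameter $h\in\{0,\tfrac12,1,\tfrac32,2\}$ correctly interpolates the ``$+1$'' terms. The cleanest route is to use that in a rank-$r$ polar space the residue of a point is a rank-$(r-1)$ polar space of the \emph{same} type (same $h$), so all the counts are governed by the recursive structure $\text{(points of rank }r) = {r \brack 1}_{\tilde q} + \tilde q\,(\tilde q^{\,r-1+h}+1)\cdot(\text{lower-rank data})$; once this recursion is set up, each of $v,k,\lambda,\mu$ is a routine specialisation. Verifying that $h$ takes the stated value in each case is the one family-by-family check that cannot be avoided, but it is bookkeeping rather than genuine difficulty. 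Finally, I would note that the identity $\mu=k/\tilde q$ can be double-checked against the global SRG relation $k(k-\lambda-1)=(v-k-1)\mu$, which provides a useful consistency test on the algebra.
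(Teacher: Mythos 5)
Your proposal is correct and follows essentially the same route as the paper, which simply invokes the ``standard arguments'' of \cite{BCN} \S 9.5 --- namely the residual polar-space point counts you spell out (points of $\mathbf v^{\perp}/\mathbf v$ for $k$, of $L^{\perp}/L$ for a singular line $L$ for $\lambda$, and of the nondegenerate $\langle\mathbf v,\mathbf u\rangle^{\perp}$ for $\mu$). The paper gives no further detail, so your sketch is if anything more explicit than the source.
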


Given a matrix $X \in M_{n \times m}(\bb{Z})$, let $\mathrm{Ab}(X)$ be the finite part of $\bb{Z}^{n}/X(\bb{Z}^{m})$. Now if we are given a prime $\lp$ and a positive integer $a$, by $e_{a}$ we denote the multiplicity of $\lp^{a}$ as an elementary divisor of the matrix $X$. Then $e_{0}$ is the $\lp$-rank of $X$ and for $a>0$, the multiplicity of $\lp^{a}$ as an elementary divisor of $\mathrm{Ab}(X)$ is $e_{a}$, by the structure theorem
for finitely generated abelian groups.

Throughout the paper, the adjacency and Laplacian matrices of $\Gamma(V)$ will be denoted by $A$ and $L$. The \emph{Smith group} of $\Gamma(V)$ is $\mathrm{Ab}(A)$ and the \emph{critical} group is $\mathrm{Ab}(L)$. Throughout the paper, $S$ and $K$ will denote the Smith and critical groups of $\Gamma(V)$ respectively. 

It follows from the definition of strongly regular graphs (cf. \cite{BH} Theorem $8.1.2$) that $A$ satisfies $A^{2}-(\mu-\lambda)A+(\mu-k)I=\mu J$. Here $J$ is the matrix of all ones.
By observing that $\mathbf{1}:=\sum\limits_{y \in \Po}y$ is an eigenvector for $A$, corresponding to the eigenvalue $k$, we have
$(A-kI)(A^{2}-(\mu-\lambda)A+(\mu-k)I)=0.$
Using this relation, the following Lemma can be derived using elementary linear algebra.
\begin{lemma}\label{rel}
Let $A$ be an adjacency matrix of $\Gamma(V)$, then $L=kI-A$ is the Laplacian. Let $r$ be the positive root of $\mathfrak{z}^{2}-(\mu-\lambda)\mathfrak{z}+(\mu-k)$, and $s$ the negative root. Let $t=k-r$, and $u=k-s$. Then the following hold.   
\begin{enumerate} 
\item We have $r=\tilde{q}^{z-1}-1$, $s=-(\tilde{q}^{z-2+h}+1)$,\\ $t=\ {z-1 \brack 1}_{\tilde{q}}(\tilde{q}^{z-1+h}+1)$,
 and $u=(\tilde{q}^{z-2+h}+1){z \brack 1}_{\tilde{q}}$.
\item $A$ has three eigenvalues, $(k,r,s)$ with multiplicities $(1,f,g)$. Where,

 $f=\dfrac{\tilde{q}^{h}(\tilde{q}^{z-2+h}+1)(\tilde{q}^{z}-1)}{(\tilde{q}-1)(\tilde{q}^{h-1}+1)}$, and $g=\dfrac{\tilde{q}(\tilde{q}^{z-1+h}+1)(\tilde{q}^{z-1}-1)}{(\tilde{q}-1)(\tilde{q}^{h-1}+1)}$.   
\item $L$ has eigenvalues $(0,t,u):=(0,k-r,k-s)$ with multiplicities $(1,f,g)$. 
\item $(\mathfrak{z}-k)(\mathfrak{z}-r)(\mathfrak{z}-s)$ is the minimal polynomial of $A$ and $(\mathfrak{z})(\mathfrak{z}-t)(\mathfrak{z}-u)$ is the minimal polynomial of $L$.
\item $(A-rI)(A-sI)=\mu J$.
\item $(L-tI)(L-uI)=-\mu J$.
\item $(\mathfrak{z}-kI)(\mathfrak{z}-rI)^{f}(\mathfrak{z}-sI)^{g}$ is the characteristic polynomial of $A$.
\item $\mathfrak{z}(\mathfrak{z}-tI)^{f}(\mathfrak{z}-uI)^{g}$ is the characteristic polynomial of $L$.
\end{enumerate} 
\end{lemma}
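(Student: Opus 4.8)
The plan is to extract the whole lemma from two ingredients: the quadratic relation $A^2-(\mu-\lambda)A+(\mu-k)I=\mu J$ stated just before the lemma, and the explicit values of $v,k,\lambda,\mu$ from Lemma~\ref{par}. The spectral statements (parts (1)--(4), (7), (8)) follow from linear algebra applied to this relation, while the two factored identities (parts (5), (6)) come from expanding a product and substituting. The first assertion is immediate: since $\Gamma(V)$ is $k$-regular its degree matrix is $kI$, so $L=D-A=kI-A$.

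For part~(1), I would first note that $\mathbf 1$ spans the $k$-eigenspace (the graph is connected, as $\mu=k/\tilde q>0$), and that $q(A):=A^2-(\mu-\lambda)A+(\mu-k)I$ equals $\mu J$, which annihilates the orthogonal complement of $\mathbf 1$. Hence the eigenvalues of $A$ other than $k$ are exactly the roots of the quadratic factor $q$, namely the claimed $r$ and $s$. To confirm the closed forms $r=\tilde q^{z-1}-1$ and $s=-(\tilde q^{z-2+h}+1)$ I would simply verify $q(r)=q(s)=0$ by substituting the parameters of Lemma~\ref{par} and expanding the Gaussian binomials ${z\brack1}_{\tilde q}=(\tilde q^{z}-1)/(\tilde q-1)$; equivalently, one checks the two Vieta identities for the sum $r+s$ and product $rs$ of the roots. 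The formulas $t=k-r$ and $u=k-s$ then follow by the same routine simplification.

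The multiplicities in part~(2) are pinned down by two linear conditions: $1+f+g=v$, since $A$ is a diagonalizable $v\times v$ matrix with exactly the three eigenvalues $k,r,s$, and $k+fr+gs=\operatorname{tr}(A)=0$, since a simple graph has zero diagonal. Solving this $2\times2$ system expresses $f$ and $g$ over the common denominator $r-s=\tilde q^{z-1}(1+\tilde q^{h-1})$, which is exactly the origin of the factor $(\tilde q^{h-1}+1)$ in the stated formulas; substituting $v$ and simplifying should reproduce them. Part~(3) is then immediate, because $L=kI-A$ shares the eigenvectors of $A$ and has eigenvalues $0,t,u$ with the same multiplicities $1,f,g$. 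Since $A$ is real symmetric, hence diagonalizable, with three pairwise distinct eigenvalues ($s<0<r<k$), its minimal polynomial is $(\mathfrak z-k)(\mathfrak z-r)(\mathfrak z-s)$; the identical argument for $L$ gives part~(4), and parts~(7) and (8) are read off directly from the eigenvalues and their multiplicities.

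Finally, parts~(5) and (6) are formal. Expanding $(A-rI)(A-sI)=A^2-(r+s)A+rsI$ and inserting the Vieta values of $r+s$ and $rs$ recovers $q(A)=\mu J$, giving (5). For (6) I would substitute $L=kI-A$ to obtain $L-tI=rI-A$ and $L-uI=sI-A$, so that the product $(L-tI)(L-uI)$ reduces at once to the product evaluated in (5). The only real work in the lemma is arithmetic: the main obstacle is carrying out the Gaussian-binomial algebra cleanly enough to confirm the closed forms for $r,s,t,u$ and, above all, the multiplicities $f,g$, where one must track the cancellation producing the $(\tilde q^{h-1}+1)$ denominator uniformly across all five admissible values of $h$.
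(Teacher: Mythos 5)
Your proposal is correct and is essentially the paper's own argument: the paper gives no details beyond quoting the strongly regular graph relation $A^{2}-(\mu-\lambda)A+(\mu-k)I=\mu J$ and asserting the lemma ``can be derived using elementary linear algebra,'' and your trace/dimension count for $f,g$, the Vieta substitutions, and the diagonalizability argument for the minimal polynomial are exactly that derivation. One caveat your method exposes: carrying out (6) as you describe gives $(L-tI)(L-uI)=(rI-A)(sI-A)=(A-rI)(A-sI)=\mu J$, i.e.\ $+\mu J$ rather than the stated $-\mu J$, and similarly the roots of the quadratic as printed sum to $\mu-\lambda$ whereas the actual $r+s$ equals $\lambda-\mu$; these are sign typos in the paper's statement (harmless later, since only $\lp$-adic valuations are used) that you should flag rather than reproduce.
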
 
As $A$ is a non-singular matrix, the order of the Smith group $S=|det(A)|$ and thus, 
$|S|=kr^{f}s^{g}$.
As a consequence of Kirchhoff's Matrix Tree Theorem (cf. \cite{Sta}), we have $|K|=\dfrac{t^{f}u^{g}}{v}$.

\section{Main Results}\label{main_results}
Our main results are presented in Theorems \ref{S} and \ref{K} below. A Polar graph as defined in the previous section is isomorphic to one of $\Gamma_{s}(q,m)$ (with $m \geq 2$), $\Gamma_{o}(q,m)$ (with $m \geq 2$), $\Gamma_{o-}(q,m)$ (with $m \geq 3$), $\Gamma_{o+}(q,m)$ (with $m \geq 3$), $\Gamma_{ue}(q,m)$ ( with $m \geq 2$), and $\Gamma_{uo}(q,m)$ (with $m \geq 2$). Theorem \ref{S}  describes the Smith groups of polar graphs and Theorem \ref{K} the critical groups. Corresponding to the six families of Polar graphs, the six tables following Theorem \ref{S} (respectively Theorem \ref{K}) encode the multiplicities of elementary divisors of the Smith (resp. critical) groups of these families of graphs. 

Given a prime $\lp$, the multiplicity $e_{i}$ of $\lp^{i}$ as an elementary divisor of $A$ (respectively $L$) is given in terms of parameters defined in the first two rows of the Tables \ref{so} to \ref{uoo} (respectively Tables \ref{sk} to \ref{uok}). The parameters $x,\ f,\ g$ defined in the first rows of the tables are dimensions of certain $\G$ representations. In particular, $f$ and $g$ are the multiplicities of the eigenvalues $r$ and $s$ (of $A$) respectively.  
The second rows define parameter $a,\ d,\ w$ (respectively $a,\ b,\ c,\ d$ in the case of $L$)  as $\lp$-adic valuations of certain divisors of eigenvalues $k,\ r, \ s$ of $A$ (respectively $t,\ u$ of $K$). We note that $v_{\lp}(r)=a+w$, $v_{\lp}(s)=d$, $v_{\lp}(k)=a+d$, $v_{\lp}(t)=a+c$, and $v_{\lp}(u)=b+d$. 

\begin{example}
The $4$th and $5$th rows of table \ref{sk} show that the $2$-elementary divisors $L$ when $\Gamma(V)=\Gamma_{s}(q,m)$ (with $q$ odd)

i) are $2^{0}$, $2^1$, $2^{d+1}$, $2^{d+b+1}$ with multiplicities $g+1$, $f-g-1$, $1$, and $g-1$ respectively, when $m$ is even;

ii) and are $2^{0}$, $2^1$, $2^{a+c}$, $2^{a+c+1}$ with multiplicities $g$, $1$, $f-g-1$, and $g$ respectively, when $m$ is odd.

Parameters $a,\ b,\ c,\ d, \ f$ and $g$ are as defined in the first two rows of table \ref{sk}.  
\end{example}

\begin{theorem}\label{S}
Let $V$ be a either a vector space over $\bb{F}_{q}$ endowed with a non-degenerate symplectic form, quadratic form, or a vector space over $\bb{F}_{q^{2}}$ carrying a non-degenerate Hermitian form.
 Further assume $\dim(V)\geq 4$ when $V$ is carrying a symplectic/Hermitian form, and $\dim(V)\geq 5$ when $V$ is endowed with a non-degenerate quadratic form.
Consider the graph $\Gamma(V)$, its Smith group $S$ and a prime $\lp \mid |S|$. 
If $\lp=p$, the $\lp$-part of $S$ is $\bb{Z}/q^{2}\bb{Z}$ when $\Gamma(V)$ is either $\Gamma_{ue}(q,m)$ or$\Gamma_{uo}(q,m)$, and $\bb{Z}/q\bb{Z}$ in other cases.
If $\lp \neq p$, the elementary divisors of $S$ are as described in Tables \ref{so}, \ref{oo}, \ref{omo}, \ref{opo}, \ref{ueo}, and \ref{uoo}. In these, $\delta_{ij}$ is $1$ if $i=j$ and $0$ otherwise.  
\end{theorem}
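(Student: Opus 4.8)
The plan is to compute the $\lp$-elementary divisors of $A$ one prime at a time, using the identity $e_{a}=\dim(\ov{M}_{a}/\ov{M}_{a+1})$ from \S\ref{snf}, and to separate the defining characteristic $\lp=p$ from the cross characteristics $\lp\neq p$. \textbf{The prime $\lp=p$.} Since $\tilde{q}$ is a power of $p$, I would reduce the relation $(A-rI)(A-sI)=\mu J$ of Lemma \ref{rel}(5) modulo $p$. By Lemma \ref{rel}(1), $r=\tilde{q}^{z-1}-1\equiv -1$ and $s=-(\tilde{q}^{z-2+h}+1)\equiv -1$ modulo $p$, while $\mu={z-1\brack 1}_{\tilde{q}}(\tilde{q}^{z-2+h}+1)$ is a unit modulo $p$, and a short valuation check on the formula for $k$ in Lemma \ref{par} gives $v_{p}(k)=v_{p}(\tilde{q})$. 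Hence over $\bb{F}_{p}$ one has $(\ov{A}+I)^{2}=\ov{\mu}\,\ov{J}$. Feeding any $y\in\ker\ov{A}$ into this relation gives $(\ov{A}+I)^{2}y=y$, so $y=\ov{\mu}(\mathbf{1}^{\top}y)\mathbf{1}$ lies in $\langle\mathbf{1}\rangle$; since $\mathbf{1}\in\ker\ov{A}$ (as $p\mid k$), we conclude $\ker\ov{A}=\langle\mathbf{1}\rangle$ is one-dimensional. Thus exactly one invariant factor of $A$ is divisible by $p$. As $|S|=kr^{f}s^{g}$ and $p$ divides neither $r$ nor $s$, the $p$-valuation of $|S|$ equals $v_{p}(k)=v_{p}(\tilde{q})$, so that single invariant factor is $p^{v_{p}(\tilde{q})}$, namely $q$ in the symplectic and orthogonal cases and $q^{2}$ in the Hermitian cases. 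This is the first assertion.

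\textbf{The primes $\lp\neq p$.} Here I would invoke the dichotomy recorded in Table \ref{nilconst}: $\ov{A}_{\lp}$ is non-nilpotent precisely when $\lp$ divides at most two of $k,r,s$, and nilpotent when $\lp$ divides all three. In the non-nilpotent case I would apply \S\ref{easy}, where each $\ov{M}_{i}$ is shown to equal $\Ima(\ov{A}_{\lp}-\ov{\alpha}I)$ or the reduction of $\ker(A-\alpha I)$ for an eigenvalue $\alpha\in\{k,r,s\}$; the dimensions of these spaces are the multiplicities $1,f,g$ of Lemma \ref{rel}(2). Comparing successive $\ov{M}_{i}$ through $e_{a}=\dim(\ov{M}_{a}/\ov{M}_{a+1})$ then expresses every multiplicity in terms of the valuations $a,d,w$ fixed by $v_{\lp}(r)=a+w$, $v_{\lp}(s)=d$, $v_{\lp}(k)=a+d$, and I would match the result against the appropriate rows of Tables \ref{so}--\ref{uoo}.

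\textbf{The nilpotent case, and the main obstacle.} When $\ov{A}_{\lp}$ is nilpotent, $A$ is a $\bb{Z}_{\lp}\G$-endomorphism of $\bb{Z}_{\lp}\Po$ and each $\ov{M}_{i}$ is an $\bb{F}_{\lp}\G$-submodule of the rank-$3$ permutation module $\bb{F}_{\lp}\Po$. I would use the explicit submodule lattices of $\bb{F}_{\lp}\Po$ (of length at most $6$) determined in \cite{Lie1,Lie2,LST,ST} to locate each $\ov{M}_{i}$: the chain $\ov{M}_{0}\supseteq\ov{M}_{1}\supseteq\cdots$, together with the relation $(A-rI)(A-sI)=\mu J$ and the action of $A$ on the composition factors, constrains each $\ov{M}_{i}$ to a single node of the lattice, and reading off the dimensions of those nodes yields $e_{i}=\dim(\ov{M}_{i}/\ov{M}_{i+1})$. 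The main obstacle is exactly this identification: the lattice contains submodules of equal dimension and its shape depends delicately on the congruence class of $\lp$ relative to powers of $\tilde{q}$ and on the family parameters $h$ and $z$, so the argument must proceed case by case over the six families in \S\ref{act}--\S\ref{last}, which is where the bulk of the representation-theoretic bookkeeping, and the final matching with Tables \ref{so}--\ref{uoo}, takes place.
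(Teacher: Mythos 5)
Your proposal follows the paper's strategy essentially verbatim: the nilpotent/non-nilpotent dichotomy of Table \ref{nilconst}, the linear-algebra argument of \S\ref{easy} built on Lemmas \ref{eigenval} and \ref{main}, and the location of the chain $\ov{M}_0\supseteq\ov{M}_1\supseteq\cdots$ inside the cross-characteristic submodule lattices of \cite{Lie1}, \cite{Lie2}, \cite{LST}, \cite{ST}, carried out family by family in \S\ref{s}--\S\ref{last}. The only (harmless) deviation is at $\lp=p$, where you compute $\ker\ov{A}_{p}=\langle\mathbf{1}\rangle$ directly from $(\ov{A}+I)^{2}=\ov{\mu}\,\ov{J}$, whereas the paper reaches the same conclusion in Case 3 of \S\ref{easy} by combining the eigenvector $\mathbf{1}$ (Lemma \ref{eigenval}) with the counting Lemma \ref{main}; both yield the single nontrivial elementary divisor $p^{v_{p}(\tilde{q})}=\tilde{q}$.
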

\footnotesize
\begin{table}[H]
\begin{tabular}{|c|c|p{7cm}|} 
\hline
\multicolumn{3}{|c|}{$(f,g):=\left(\ \frac{q(q^{m}-1)(q^{m-1}+1)}{2(q-1)},\ \frac{q(q^{m}+1)(q^{m-1}-1)}{2(q-1)}\right)$}  \\
\hline
\multicolumn{3}{|c|} {$(a,d,w):=\left(v_{\lp}({m-1 \brack 1}_{q}), \ v_{\lp}(q^{m-1}+1) , \ v_{\lp}(q-1)\right)$ }\\
\cline{1-3}
Prime & Arithmetic conditions & Non-zero divisor multiplicities \\\hline
\multirow{2}{*}{$\lp=2$} & $m$ is even and $q$ is odd  & 
$e_{0}=g+1$, $e_{w}=f-g-1$, and $e_{d+w}=g+1$. \\
\cline{2-3}
& $m$ is odd and $q$ is odd &  $e_{0}=g$, $e_{a}=1$, $e_{a+w}=f-g-1$, and $e_{a+w+1}=g+1$.\\
\cline{1-3}
\multirow{2}{*}{$\lp\neq2$} &  $d=0$  & $e_{0}=g+\delta_{a,0}$,  $e_{a}=\delta_{w,0}(f)+1+\delta_{a,0}(g)$, and $e_{a+w}=f+\delta_{w,0}$. \\
\cline{2-3}
&  $a=w=0$ & $e_{0}=f$, and $e_{d}=g+1$.\\
\cline{1-3}
\end{tabular}
\caption{Smith group of $\Gamma_{s}(q,m)$.}
\label{so}
\end{table}
 \normalsize
 See \S\ref{easy} (set $h=1$ and $z=m$) and \S\ref{s} for computation of the Smith group of $\Gamma_{s}(q,m)$.
 \footnotesize
 \begin{table}[H]

 \begin{tabular}{|c|c|p{8cm}|} 
\hline
\multicolumn{3}{|c|}{$(x,f,g):=\left(\dfrac{q^{2m}-q^{2}}{q^{2}-1},\ \frac{q(q^{m}-1)(q^{m-1}+1)}{2(q-1)},\ \frac{q(q^{m}+1)(q^{m-1}-1)}{2(q-1)}\right)$}\\
\hline
\multicolumn{3}{|c|}{$(a,d,w):=\left(v_{\lp}({m-1 \brack 1}_{q}), \ v_{\lp}(q^{m-1}+1) , \ v_{\lp}(q-1)\right)$}\\
\hline
Prime & Arithmetic conditions & Non-zero divisor multiplicities \\\hline
\multirow{2}{*}{$\lp=2$} & $m$ is even and $q$ is odd  & 
$e_{0}=x+1$, $e_{d}=g-x$, $e_{w}=f-x-1$, and $e_{d+w}=x+1$. \\
\cline{2-3}
& $m$ is odd and $q$ is odd & $e_{0}=x$, $e_{1}=g-x+\delta_{a,1}$, $e_{a}=1+\delta_{a,1}(g-x)$, $e_{a+w}= f-x-1$, and $e_{a+w+1}=x+1$.\\
\cline{1-3}
\multirow{2}{*}{$\lp\neq2$} &  $d=0$  & $e_{0}=g+\delta_{a,0}$,  $e_{a}=\delta_{w,0}(f)+1+\delta_{a,0}(g)$, and $e_{a+w}=f+\delta_{w,0}$. \\
\cline{2-3}
&  $a=w=0$ & $e_{0}=f$ and $e_{d}=g+1$.\\
\cline{1-3}
\end{tabular} 
\caption{Smith group of $\Gamma_{o}(q,m)$.}
\label{oo}
 \end{table}
 \normalsize
 See \S\ref{easy} (set $h=1$ and $z=m$) and \S\ref{o} for the computation of the Smith group of $\Gamma_{o}(q,m)$.
\footnotesize
 \begin{table}[H]
 \begin{tabular}{|c|c|p{7cm}|} 
\hline
\multicolumn{3}{|c|}{$(f,g):=\left( \frac{q^2(q^{m-1}-1)(q^{m-1}+1)}{(q^2-1)},\ \frac{q(q^{m}+1)(q^{m-2}-1)}{(q^2-1)}\right)$}\\
\hline
\multicolumn{3}{|c|}{$(a,d,w):=\left(v_{\lp}({m-2 \brack 1}_{q}), \ v_{\lp}(q^{m-1}+1) , \ v_{\lp}(q-1)\right)$}\\
\hline
Prime & Arithmetic conditions & Non-zero divisor multiplicities \\\hline
\multirow{2}{*}{$\lp=2$} & $m$ is even and $q$ odd.  & 
$e_{0}=g$, $e_{a}=1$, $e_{a+w}=f-g-1$, and $e_{a+d+w}=g+1$. \\
\cline{2-3}
& $m$ is odd and $q$ is odd. &  $e_{0}=g+1$, $e_{w}=f-g-1$, $e_{w+1}=g+1$.\\
\cline{1-3}
\multirow{3}{*}{$\lp\neq2$} & $q+1 \equiv 0 \pmod{\lp}$ and $m$ is even& $e_{0}=g$, $e_{a}=f-g$, and $e_{a+d}=g+1$ \\
\cline{2-3}
 &  $q \not\equiv -1 \pmod{\lp}$ and $d=0$  & $e_{0}=g+\delta_{a,0}$,  $e_{a}=\delta_{w,0}(f)+1+\delta_{a,0}(g)$, and $e_{a+w}=f+\delta_{w,0}$. \\
\cline{2-3}
&  $q \not\equiv -1 \pmod{\lp}$ and $a=w=0$ & $e_{0}=f$, and $e_{d}=g+1$.\\
\cline{1-3}
\end{tabular} 
\caption{Smith group of $\Gamma_{o-}(q,m)$.}
\label{omo}
 \end{table}
 \normalsize
See \S\ref{easy} (set $h=2$ and $z=m-1$) and \S\ref{om} for computation of the Smith group of $\Gamma_{o-}(q,m)$.
\footnotesize
\begin{table}[H]
 \begin{tabular}{|c|c|p{7cm}|}
\hline
\multicolumn{3}{|c|}{
$(f,g):=\left( \frac{q(q^{m}-1)(q^{m-2}+1)}{(q^2-1)},\ \frac{q^2(q^{m-1}+1)(q^{m-1}-1)}{(q^2-1)}\right)$}\\
\hline
\multicolumn{3}{|c|}{
$(a,d,w):=\left(v_{\lp}({m-1 \brack 1}_{q}), \ v_{\lp}(q^{m-2}+1) , \ v_{\lp}(q-1)\right)$}\\
\hline
Prime & Arithmetic conditions & Non-zero divisor multiplicities \\\hline
\multirow{2}{*}{$\lp=2$} & $m$ is even and $q$ odd.  & 
$e_{0}=f$, $e_{1}=g+1-f$, and $e_{w+1}=f$. \\
\cline{2-3}
& $m$ is odd and $q$ is odd. &  $e_{0}=f-1$, $e_{d}=g-f+1+\delta_{a,d}$, $e_{a}=\delta_{a,d}(g-f+1)+1$, and $e_{a+w+d}=f$.\\
\cline{1-3}
\multirow{3}{*}{$\lp\neq2$} & $q+1 \equiv 0 \pmod{\lp}$ and $m$ is odd& $e_{0}=f-1$, $e_{a}=1+\delta_{a,d}(g-f+1)$, $e_{d}=\delta_{a,d}+(g-f+1)$, and $e_{a+b}=f$. \\
\cline{2-3}
 &  $q \not\equiv -1 \pmod{\lp}$ and $d=0$  & $e_{0}=g+\delta_{a,0}$,  $e_{a}=\delta_{w,0}(f)+1+\delta_{a,0}(g)$, and $e_{a+w}=f+\delta_{w,0}$. \\
\cline{2-3}
&  $q \not\equiv -1 \pmod{\lp}$ and $a=w=0$ & $e_{0}=f$ and $e_{d}=g+1$.\\
\cline{1-3}
\end{tabular} 
\caption{Smith group of $\Gamma_{o+}(q,m)$.}
\label{opo}
 \end{table}
 \normalsize
See \S\ref{easy} (set $h=0$ and $z=m$) and \S\ref{op} for the computation of the Smith group of $\Gamma_{o+}(q,m)$.
\footnotesize
\begin{table}[H]
\begin{tabular}{|c|c|p{8cm}|}
\hline
\multicolumn{3}{|c|}{
$(x,f,g):= \left( \dfrac{(q^{2m}-1)(q^{2m-1}+1)}{(q^{2}-1)(q-1)}, \ \dfrac{q^{2}{m \brack 1}_{q^{2}}(q^{2m-3}+1)}{q+1},  \ \dfrac{q^{3}{m-1 \brack 1}_{q^{2}}(q^{2m-1}+1)}{q-1}\right)$}
\\
\hline
\multicolumn{3}{|c|}{
$(a,d,w):=\left(v_{\lp}({m-1 \brack 1}_{q^2}), \ v_{\lp}(q^{2m-3}+1),\ v_{\lp}(q^2-1) \right)$}\\
\hline
Prime & Arithmetic conditions & Non-zero divisor multiplicities \\\hline
\multirow{3}{*}{$\lp\mid q+1$} & $\lp \nmid m$ and $\lp\nmid m-1$ & $e_{0}=x+1$, $e_{w}=f-x-1+\delta_{w,d}(g-x)$, $e_{d}=\delta_{w,d}(f-x-1)+g-x$, and $e_{w+d}=x+1$. \\
\cline{2-3}
& $\lp \mid m-1$ & $e_{0}=x$, $e_{a}=1+\delta_{a,d}(g-x)$, $e_{d}=g-x+\delta_{a,d}$, $e_{w+a}=f-x-1$, and $e_{w+a+d}=x+1$.\\
\cline{2-3}
& $\lp \mid m$ & $e_{0}=x+1$, $e_{d}=g-x$, $e_{w}=f-x-1$, and $e_{w+d}=x+1$.\\
\cline{1-3}
\cline{1-3}
\multirow{2}{*}{$\lp\nmid q+1$} &  $d=0$  & $e_{0}=g+\delta_{a,0}$,  $e_{a}=\delta_{w,0}(f)+1+\delta_{a,0}(g)$, and $e_{a+w}=f+\delta_{w,0}$. \\
\cline{2-3}
&  $a=w=0$ & $e_{0}=f$ and $e_{d}=g+1$.\\
\cline{1-3}
\end{tabular}
\caption{Smith group of $\Gamma_{ue}(q,m)$.}
\label{ueo}
\end{table}
\normalsize
 See \S\ref{easy} (set $h=1/2$ and $z=m$) and \S\ref{ue} for the computation of the Smith group of $\Gamma_{ue}(q,m)$.
\footnotesize
\begin{table}[H]
\begin{tabular}{|c|c|p{8cm}|}
\hline
\multicolumn{3}{|c|}{
$(f,g):=\left(\dfrac{q^{3}{m \brack 1}_{q^{2}}(q^{2m-1}+1)}{q+1},\ \dfrac{q^{2}{m-1 \brack 1}_{q^{2}}(q^{2m-2}-1)}{q-1}\right)$} \\
\hline
\multicolumn{3}{|c|}{
$(a,d,w):=\left(v_{\lp}({m-1 \brack 1}_{q^2}), \ v_{\lp}(q^{2m+1}+1),\ v_{\lp}(q^2-1) \right)$}\\
\hline
Prime & Arithmetic conditions & Non-zero divisor multiplicities \\\hline
\multirow{2}{*}{$\lp\mid q+1$} & $\lp \nmid m$ & $e_{0}=g$, $e_{a}=1$, $e_{w+a}=f-g-1$, and $e_{w+a+d}=g+1$.\\
\cline{2-3}
& $\lp \mid m$ & $e_{0}=g+1$, $e_{w}=f-g-1$, and $e_{w+d}=g+1$.\\
\cline{1-3}
\multirow{2}{*}{$\lp\nmid q+1$} &  $d=0$  & $e_{0}=g+\delta_{a,0}$,  $e_{a}=\delta_{w,0}(f)+1+\delta_{a,0}(g)$, and $e_{a+w}=f+\delta_{w,0}$. \\
\cline{2-3}
&  $a=w=0$ & $e_{0}=f$ and $e_{d}=g+1$.\\
\cline{1-3}
 
\end{tabular}
\caption{Smith group of $\Gamma_{uo}(q,m)$.}
\label{uoo}
\end{table}
\normalsize
See \S\ref{easy} (set $h=3/2$ and $z=m$) and \S\ref{last} for the computation of the Smith group of $\Gamma_{uo}(q,m)$.
\begin{theorem}\label{K}
Let $V$ be a either a vector space over $\bb{F}_{q}$ endowed with a non-degenerate symplectic form, quadratic form, or a vector space over $\bb{F}_{q^{2}}$ carrying a non-degenerate Hermitian form.
 Further assume $\dim(V)\geq 4$ when $V$ is carrying a symplectic/Hermitian form, and $\dim(V)\geq 5$ when $V$ is endowed with a non-degenerate quadratic form.
Consider the graph $\Gamma(V)$, its critical group $K$ and a prime $\lp \mid |K|$. The $\lp$-elementary divisors of $K$ are as described in Tables \ref{sk}, \ref{ok}, \ref{omk}, \ref{opk}, \ref{uek}, and \ref{uok}. In these, $\delta_{ij}$ is $1$ if $i=j$ and $0$ otherwise.  
\end{theorem}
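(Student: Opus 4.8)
\textbf{Plan of proof for Theorem \ref{K}.}
The overall strategy mirrors the one laid out in the introduction and used for Theorem \ref{S}, now applied to $L=kI-A$ in place of $A$. Since the computation of $K=\mathrm{Ab}(L)$ can be carried out one prime $\lp$ at a time, I would fix a prime $\lp\mid |K|$ and split into the non-nilpotent and nilpotent cases according to whether the reduction $\ov{L}_{\lp}$ is nilpotent, exactly as recorded in the nilpotence table of \S\ref{primes}. By Lemma \ref{rel}, $L$ has eigenvalues $(0,t,u)$ with multiplicities $(1,f,g)$, minimal polynomial $\mathfrak{z}(\mathfrak{z}-t)(\mathfrak{z}-u)$, and satisfies $(L-tI)(L-uI)=-\mu J$. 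The prime $\lp$ divides $|K|=t^{f}u^{g}/v$, so $\lp$ divides at least one of $t,u$; the case $\ov L_\lp$ nilpotent is precisely when $\lp$ divides all of $0$, $t$, $u$, i.e. $\lp\mid\gcd(t,u)$, and the non-nilpotent case is when $\lp$ divides exactly one of $t,u$.

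For the non-nilpotent case I would invoke the general machinery of \S\ref{easy} directly. There, using only properties of strongly regular graphs and the relation in Lemma \ref{rel}(6), one shows each $\ov{M}_{i}$ equals either $\Ima(\ov{L}_{\lp}-\ov\alpha I)$ or $\ov{\ker(L-\alpha I)}$ for an eigenvalue $\alpha\in\{0,t,u\}$; combined with $e_{a}=\dim(\ov M_a/\ov M_{a+1})$ this yields the elementary divisors from the eigenvalue multiplicities $f,g$ and the valuations $v_\lp(t)=a+c$, $v_\lp(u)=b+d$ recorded in \S\ref{main_results}. This reproduces the rows of Tables \ref{sk}--\ref{uok} in which $\lp$ is coprime to one root; it requires essentially no new work beyond plugging the family-specific parameters $h,z$ into the generic formulas.

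The substance of the theorem lies in the nilpotent case, handled family-by-family in \S\ref{s} through \S\ref{last}. Here I would pass to the $\bb{Z}_{\lp}\G$-module structure: $L$ acts as a $\bb{Z}_{\lp}\G$-endomorphism of $\bb{Z}_{\lp}\Po$, so each $\ov M_a$ is an $\bb{F}_{\lp}\G$-submodule of the rank-$3$ permutation module $\bb{F}_{\lp}\Po$, whose submodule lattice is known from \cite{Lie1,Lie2,LST,ST} and has length at most $6$. The key chain is $M_{a}=\{y\mid Ly\in\lp^{a}\bb{Z}_{\lp}^{n}\}$, giving a decreasing filtration of $\bb{F}_\lp\Po$ by the $\ov M_a$; since $L$ and $A$ differ only by the scalar matrix $kI$, the submodules arising for $L$ are closely tied to those for $A$, and one can transport much of the analysis from the Smith-group computation, adjusting valuations by the substitutions $v_\lp(t)=a+c$ and $v_\lp(u)=b+d$ in place of $v_\lp(r)=a+w$, $v_\lp(s)=d$. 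The main obstacle is the precise \emph{location} of each $\ov M_a$ in the submodule lattice: because the lattice is non-uniserial for several families, identifying which composition factors lie in $\ov M_a$ (and hence computing $\dim\ov M_a$) requires exploiting the explicit $\G$-module maps given by $A$, $J$, and $L-\alpha I$ together with the relation $(L-tI)(L-uI)=-\mu J$ to pin down images and kernels modulo $\lp$. Once each $\dim\ov M_a$ is determined, the elementary-divisor multiplicities follow by taking successive differences, and assembling the six families yields the tables.
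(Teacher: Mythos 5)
Your plan follows essentially the same route as the paper: split by prime into the non-nilpotent case (handled in \S\ref{easy} via the quadratic relation $(L-tI)(L-uI)=-\mu J$ and eigenvalue valuations) and the nilpotent case (handled family-by-family in \S\ref{s}--\S\ref{last} by locating the $\ov{M}_{i}(L)$ in the known $\bb{F}_{\lp}\G$-submodule lattice of $\bb{F}_{\lp}\Po$). The one ingredient worth naming explicitly is Lemma \ref{main}: the paper never computes the $\dim\ov{M}_{i}$ directly but only establishes lower bounds from module inclusions, and it is the counting identity $v_{\lp}(|K|)=\sum_{i}ie_{i}$ in that lemma that forces these bounds to be equalities before one can ``take successive differences.''
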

\footnotesize
\begin{table}[H]
\begin{tabular}{|c|c|p{7cm}|} 
\hline
\multicolumn{3}{|c|}{$(f,g):=\left(\ \frac{q(q^{m}-1)(q^{m-1}+1)}{2(q-1)},\ \frac{q(q^{m}+1)(q^{m-1}-1)}{2(q-1)}\right)$}  \\
\hline
\multicolumn{3}{|c|} {$(a,b,c,d):=\left(v_{\lp}({m-1 \brack 1}_{q}),\ v_{\lp}({m \brack 1}_{q}), \ v_{\lp}(q^{m}+1), \ v_{\lp}(q^{m-1}+1) \right)$}\\
\cline{1-3}
Prime & Arithmetic conditions & Non-zero divisor multiplicities \\\hline
\multirow{2}{*}{$\lp=2$} & $m$ is even and $q$ is odd  & 
$e_{0}=g+1$, $e_{1}=f-g-1$, $e_{d+1}=1$, and $e_{d+b+1}=g-1$. \\
\cline{2-3}
& $m$ is odd and $q$ is odd &  $e_{0}=g$, $e_{a}=1$,  $e_{a+c}=f-g-1$, and $e_{a+c+1}=g$.\\
\cline{1-3}
\multirow{2}{*}{$\lp\neq2$} &  $b=d=0$  & $e_{0}=g+\delta_{a,0}$,  $e_{a}=\delta_{c,0}(f-1)+1+\delta_{a,0}(g)$, and $e_{a+c}=f-1+\delta_{c,0}$. \\
\cline{2-3}
&  $a=c=0$ & $e_{0}=f+\delta_{d,0}$, $e_{d}=\delta_{b,0}(g)+1+\delta_{d,0}(f)$, and $e_{b+d}= g-1+\delta_{b,0}$\\
\cline{1-3}
\end{tabular}
\caption{Critical group of $\Gamma_{s}(q,m)$.}
\label{sk}
\end{table}
\normalsize
See \S\ref{easy} (set $h=1$ and $z=m$) and \S\ref{s} for computation of the critical group of 
\footnotesize
$\Gamma_{s}(q,m)$.
 \begin{table}[H]
\begin{tabular}{|c|c|p{7cm}|} 
\hline
\multicolumn{3}{|c|}{$(x,f,g):=\left(\dfrac{q^{2m}-q^{2}}{q^{2}-1},\ \frac{q(q^{m}-1)(q^{m-1}+1)}{2(q-1)},\ \frac{q(q^{m}+1)(q^{m-1}-1)}{2(q-1)}\right)$}  \\
\hline
\multicolumn{3}{|c|} {$(a,b,c,d):=\left(v_{\lp}({m-1 \brack 1}_{q}),\ v_{\lp}({m \brack 1}_{q}), \ v_{\lp}(q^{m}+1), \ v_{\lp}(q^{m-1}+1) \right)$}\\
\cline{1-3}
Prime & Arithmetic conditions & Non-zero divisor multiplicities \\\hline
\multirow{2}{*}{$\lp=2$} & $m$ is even and $q$ is odd & 
$e_{0}=x+1$, $e_{1}=f-x-1$, $e_{d+1}=1+\delta_{b,1}(g-x)$, $e_{b+d}=g-x+\delta_{b,1}$, and $e_{d+b+1}=x-1$. \\
\cline{2-3}
& $m$ is odd and $q$ is odd &  $e_{0}=x$, $e_{1}=g-x+\delta_{a,1}$, $e_{a}=\delta_{a,1}(g-x)+1$,  $e_{a+c}=f-x-1$, and $e_{a+c+1}=x$.\\
\cline{1-3}
\multirow{2}{*}{$\lp\neq2$} &  $b=d=0$  & $e_{0}=g+\delta_{a,0}$,  $e_{a}=\delta_{c,0}(f-1)+1+\delta_{a,0}(g)$, and $e_{a+c}=f-1+\delta_{c,0}$. \\
\cline{2-3}
&  $a=c=0$ & $e_{0}=f+\delta_{d,0}$, $e_{d}=\delta_{b,0}(g)+1+\delta_{d,0}(f)$, and $e_{b+d}= g-1+\delta_{b,0}$\\
\cline{1-3}
\end{tabular} 
\caption{Critical group of $\Gamma_{o}(q,m)$.}
\label{ok}
 \end{table}
 \normalsize
 See \S\ref{easy} (set $h=1$ and $z=m$) and \S\ref{o} for the computation of the critical group of $\Gamma_{o}(q,m)$.
\footnotesize
 \begin{table}[H]
 \begin{tabular}{|c|c|p{7cm}|} 
 \hline
\multicolumn{3}{|c|}{$(f,g):=\left( \frac{q^2(q^{m-1}-1)(q^{m-1}+1)}{(q^2-1)},\ \frac{q(q^{m}+1)(q^{m-2}-1)}{(q^2-1)}\right)$}\\
\hline
\multicolumn{3}{|c|}{$(a,b,c,d):=\left(v_{\lp}({m-2 \brack 1}_{q}),\ v_{\lp}({m-1 \brack 1}_{q}), \ v_{\lp}(q^{m}+1), \ v_{\lp}(q^{m-1}+1) \right)$}\\
\hline
\
Prime & Arithmetic conditions & Non-zero divisor multiplicities \\\hline
\multirow{2}{*}{$\lp=2$} & $m$ is even and $q$ odd.  & 
$e_{0}=g$, $e_{a}=1$, $e_{a+1}=f-g-1$, and $e_{a+d+1}=g$. \\
\cline{2-3}
& $m$ is odd and $q$ is odd. &  $e_{0}=g+1$, $e_{c}=f-g-1$, $e_{c+1}=1$, and $e_{b+c+1}=g-1$.\\
\cline{1-3}
\multirow{4}{*}{$\lp\neq2$} & $q+1 \equiv 0 \pmod{\lp}$ and $m$ is even& $e_{0}=g$, $e_{a}=f-g$, and $e_{a+d}=g$ \\
\cline{2-3}
& $q+1 \equiv 0 \pmod{\lp}$ and $m$ is odd & $e_{0}=g+1$, $e_{c}=f-g$, and $e_{b+c}=g-1$ \\
\cline{2-3}
 &  $q \not\equiv -1 \pmod{\lp}$ and $b=d=0$  & $e_{0}=g+\delta_{a,0}$,  $e_{a}=\delta_{c,0}(f-1)+1+\delta_{a,0}(g)$, and $e_{a+c}=f-1+\delta_{c,0}$. \\
\cline{2-3}
&  $q \not\equiv -1 \pmod{\lp}$ and $a=c=0$ & $e_{0}=f+\delta_{d,0}$, $e_{d}=\delta_{b,0}(g)+1+\delta_{d,0}(f)$, and $e_{b+d}= g-1+\delta_{b,0}$\\
\cline{1-3}
\end{tabular} 
\caption{Critical group of $\Gamma_{o-}(q,m)$.}
\label{omk}
 \end{table}
\normalsize
See \S\ref{easy} (set $h=2$ and $z=m-1$) and \S\ref{om} for computation of the \emph{critcal} group of $\Gamma_{o-}(q,m)$.
\footnotesize
\begin{table}[H]
 \begin{tabular}{|c|c|p{7cm}|} 
\hline
\multicolumn{3}{|c|}{
$(f,g):=\left( \frac{q(q^{m}-1)(q^{m-2}+1)}{(q^2-1)},\ \frac{q^2(q^{m-1}+1)(q^{m-1}-1)}{(q^2-1)}\right)$}\\
\hline
\multicolumn{3}{|c|}{
$(a,b,c,d):=\left(v_{\lp}({m-1 \brack 1}_{q}),\ v_{\lp}({m \brack 1}_{q}), \ v_{\lp}(q^{m-1}+1), \ v_{\lp}(q^{m-2}+1) \right)$}\\
\hline
Prime & Arithmetic conditions & Non-zero divisor multiplicities \\\hline
\multirow{2}{*}{$\lp=2$} & $m$ is even and $q$ odd.  & 
$e_{0}=f$, $e_{c+1}=\delta_{b,c}(g-f+1)+1$, $e_{b+1}=g-f+1+\delta_{b,c}$, and $e_{b+c+1}=f-2$. \\
\cline{2-3}
& $m$ is odd and $q$ is odd. &  $e_{0}=f-1$, $e_{a}=1+\delta_{a,d}(g+1-f)$, $e_{d}=g+1-f+\delta_{a,d}$,  and $e_{a+d+1}=f-1$.\\
\cline{1-3}
\multirow{4}{*}{$\lp\neq2$} & $q+1 \equiv 0 \pmod{\lp}$ and $m$ is odd& $e_{0}=f-1$, $e_{a}=1+\delta_{a,d}(g-f+1)$, $e_{d}=\delta_{a,d}+(g-f+1)$ and $e_{a+d}=f-1$. \\
\cline{2-3}
& $q+1 \equiv 0 \pmod{\lp}$ and $m$ is even & $e_{0}=f$, $e_{c}=1+\delta_{b,c}(g-f+1)$, $e_{b}=g-f+1+\delta_{b,c}$, and $e_{b+c}=f-2$.\\
\cline{2-3}
 &  $q \not\equiv -1 \pmod{\lp}$ and $b=d=0$  & $e_{0}=g+\delta_{a,0}$,  $e_{a}=\delta_{c,0}(f-1)+1+\delta_{a,0}(g)$, and $e_{a+c}=f-1+\delta_{c,0}$. \\
\cline{2-3}
&  $q \not\equiv -1 \pmod{\lp}$ and $a=c=0$ & $e_{0}=f+\delta_{d,0}$, $e_{d}=\delta_{b,0}(g)+1+\delta_{d,0}(f)$, and $e_{b+d}= g-1+\delta_{b,0}$\\
\cline{1-3}
\end{tabular} 
\caption{Critical group of $\Gamma_{o+}(q,m)$.}
\label{opk}
 \end{table}
\normalsize
See \S\ref{easy} (set $h=0$ and $z=m$) and \S\ref{op} for the computation of the critical group of $\Gamma_{o+}(q,m)$.
\footnotesize
\begin{table}[H]
\begin{tabular}{|c|c|p{8cm}|}
\hline
\multicolumn{3}{|c|}{
$(x,f,g):= \left( \dfrac{(q^{2m}-1)(q^{2m-1}+1)}{(q^{2}-1)(q-1)}, \ \dfrac{q^{2}{m \brack 1}_{q^{2}}(q^{2m-3}+1)}{q+1},  \ \dfrac{q^{3}{m-1 \brack 1}_{q^{2}}(q^{2m-1}+1)}{q-1}\right)$}
\\
\hline
\multicolumn{3}{|c|}{$(a,b,c,d):=\left(v_{\lp}({m-1 \brack 1}_{q^2}),\ v_{\lp}({m \brack 1}_{q^2}), \ v_{\lp}(q^{2m-1}+1), \ v_{\lp}(q^{2m-3}+1) \right)$}\\
\hline
Prime & Arithmetic conditions & Non-zero divisor multiplicities \\\hline
\multirow{3}{*}{$\lp\mid q+1$} & $\lp \nmid m$ and $\lp\nmid m-1$ & $e_{0}=x+1$, $e_{d}=f-x-1+\delta_{c,d}(g-x)$, $e_{c}=\delta_{c,d}(f-x-1)+g-x$, and $e_{c+d}=x$. \\
\cline{2-3}
& $\lp \mid m-1$ & $e_{0}=x$, $e_{a}=1+\delta_{a,c}(g-x)$, $e_{c}=\delta_{a,c}+(g-x)$, $e_{a+d}=f-x-1$, and $e_{c+d}=x$.\\
\cline{2-3}
& $\lp \mid m$ & $e_{0}=x+1$, $e_{d}=f-x-1$, $e_{b+d}=g-x+\delta_{b,d}$, $e_{2d}=1+\delta_{b,d}(g-x)$, and $e_{b+2d}=x-1$.\\
\cline{1-3}
\cline{1-3}
\multirow{2}{*}{$\lp\nmid q+1$} &  $b=d=0$  & $e_{0}=g+\delta_{a,0}$,  $e_{a}=\delta_{c,0}(f-1)+1+\delta_{a,0}(g)$, and $e_{a+c}=f-1+\delta_{c,0}$. \\
\cline{2-3}
&  $a=c=0$ & $e_{0}=f+\delta_{d,0}$, $e_{d}=\delta_{b,0}(g)+1+\delta_{d,0}(f)$, and $e_{b+d}= g-1+\delta_{b,0}$\\
\cline{1-3}
 
\end{tabular}
\caption{Critical group of $\Gamma_{ue}(q,m)$.}
\label{uek}
\end{table}
\normalsize
 See \S\ref{easy} (set $h=1/2$ and $z=m$) and \S\ref{ue} for the computation of the critical group of $\Gamma_{ue}(q,m)$.
\footnotesize
\begin{table}[H]
\begin{tabular}{|c|c|p{8cm}|}
\hline
\multicolumn{3}{|c|}{
$(f,g):=\left(\dfrac{q^{3}{m \brack 1}_{q^{2}}(q^{2m-1}+1)}{q+1},\ \dfrac{q^{2}{m-1 \brack 1}_{q^{2}}(q^{2m-2}-1)}{q-1}\right)$} \\
\hline
\multicolumn{3}{|c|}{
$(a,b,c,d):=\left(v_{\lp}({m-1 \brack 1}_{q^2}),\ v_{\lp}({m \brack 1}_{q^2}), \ v_{\lp}(q^{2m-1}+1), \ v_{\lp}(q^{2m+1}+1) \right)$}\\
\hline
Prime & Arithmetic conditions & Non-zero divisor multiplicities \\\hline
\multirow{2}{*}{$\lp\mid q+1$} & $\lp \nmid m$ & $e_{0}=g$, $e_{a}=1$, $e_{a+d}=f-g-1$, and $e_{a+c+d}=g$.\\
\cline{2-3}
& $\lp \mid m$ & $e_{0}=g+1$, $e_{d}=f-g-1$, $e_{2d}=1$, $e_{b+2d}=g-1$\\
\cline{1-3}
\multirow{2}{*}{$\lp\nmid q+1$} &  $b=d=0$  & $e_{0}=g+\delta_{a,0}$,  $e_{a}=\delta_{c,0}(f-1)+1+\delta_{a,0}(g)$, and $e_{a+c}=f-1+\delta_{c,0}$. \\
\cline{2-3}
&  $a=c=0$ & $e_{0}=f+\delta_{d,0}$, $e_{d}=\delta_{b,0}(g)+1+\delta_{d,0}(f)$, and $e_{b+d}= g-1+\delta_{b,0}$\\
\cline{1-3}
 
\end{tabular}
\caption{Critical group of $\Gamma_{uo}(q,m)$.}
\label{uok}
\end{table}
\normalsize
See \S\ref{easy} (set $h=3/2$ and $z=m$) and \S\ref{last} for the computation of the critical group of $\Gamma_{uo}(q,m)$.

\begin{remark}
We observe that the two families of polar graphs $\Gamma_{s}(q,m)$ and $\Gamma_{o}(q,m)$ are SRGs with the same parameters but different Smith and critical groups. This is an example where Smith and critical groups are distinguishing invariants for two families of isospectral graphs.  
\end{remark}

\section{Smith normal form}\label{snf}
Let $R$ be any PID and $T:R^{m} \to R^{n}$ be a linear transformation. By the structure theorem for finitely generated modules over PIDs, we have $\{\alpha_{i}\}_{i=1}^{s} \subset R \setminus \{0\}$ such that $\alpha_{i} \mid \alpha_{i+1}$ and 
$$\mathrm{coker}(T)\cong R^{n-s}\oplus \bigoplus\limits_{i=1}^{s} R /\alpha_{i}R.$$  
By $[T]$ we denote the matrix representation of $T$ with respect to standard bases. Then the above equation tells us that we can find $P \in \mathrm{GL}(R^{n})$, and $Q \in \mathrm{GL}(R^{m})$ such that 
$$P[T]Q=\left[
\begin{array}{c|c}
Y & 0_{(s \times n-s)} \\
\hline
0_{(m- \times s)} & 0_{(n-s \times n-s)}
\end{array}
\right],$$ 
where $Y=\mathrm{diag}(\alpha_{1}, \ldots ,\alpha_{s})$. The matrix $P[T]Q$ is called the Smith normal form of $T$.

Let $\lp \in R$ be a prime dividing $\alpha_{s}$. Given $j \in \bb{Z}_{\geq 0}$, we define $e_{j}(\lp):=|\{\alpha_{i}| \ v_{\lp}(\alpha_{i})=j \}|$. Now $e_{j}(\lp)$ is the multiplicities of $\lp ^{j}$ as an $\lp$-elementary divisors of $\mathrm{coker}(T)$. If $R=\bb{Z},$ then $e_{j}(\lp)$ is the multiplicity of $ \lp^{j}$ as an elementary divisor of the abelian group $\mathrm{coker}(T)$.

Let $R_{\lp}$ be the $\lp$-adic completion of $R$. We have 
$$R_{\lp}^{n}/T(R_{\lp}^{m})\cong R_{\lp}^{n-s}\oplus \bigoplus\limits_{j>0} \left(R_{\lp}/\lp^{j} R_{\lp}\right)^{e_{j}(\lp)}.$$
Define $M_{j}(T):=\{\mathfrak{z} \in R_{\lp}^{m}| \ T(\mathfrak{z}) \in \lp^{j}R_{\lp}^{n}\}$. For ease of notation, we denote $M_{j}(T)$ by $M_{j}$ and $e_{j}(\lp)$ by $e_{j}$. 
We have $R^{m}_{\lp}=M_{0}(T)\supset M_{1}(T) \supset \ldots \supset M_{n}(T) \supset \cdots$. 

Let $\bb{F}=R_{\lp}/ \lp R_{\lp}$.
If $M \subset R_{\lp}^{m}$ is a submodule, define $\ov{M}=(M+\lp R_{\lp}^{m})/ \lp R_{\lp}^{m}$. Then $\ov{M}$ is an $\bb{F}$-vector space.

The following Lemma follows from the structure theorem.
\begin{lemma}\label{eldivcal}
$e_{j}:=\dim(\ov{M_{j}(T)}/\ov{M_{j+1}(T)})$.
\end{lemma}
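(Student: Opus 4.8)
The plan is to transport the entire computation to the Smith normal form, where the modules $M_j(T)$ become transparent. The essential observation is that the change-of-basis matrices $P\in \mathrm{GL}(R^n)$ and $Q\in \mathrm{GL}(R^m)$ producing $P[T]Q=D$, where $D=\left[\begin{smallmatrix} Y & 0 \\ 0 & 0\end{smallmatrix}\right]$ and $Y=\mathrm{diag}(\alpha_1,\dots,\alpha_s)$, have unit determinant in $R$ and hence remain invertible over the completion $R_{\lp}$. Consequently $P^{-1}$ and $Q^{-1}$ preserve each lattice $\lp^j R_{\lp}^n$, and $Q$ descends to an automorphism $\ov{Q}$ of $\mathbb{F}^m=R_{\lp}^m/\lp R_{\lp}^m$. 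This lets me replace $T$ by its diagonal form at every stage without affecting the quantities being measured.

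Writing $[T]=P^{-1}DQ^{-1}$ and using that $P^{-1}$ maps $\lp^j R_{\lp}^n$ onto itself, the membership condition $T(z)\in \lp^j R_{\lp}^n$ is equivalent to $D(Q^{-1}z)\in \lp^j R_{\lp}^n$. Substituting $w=Q^{-1}z$, so that $z=Qw$, the condition decouples coordinatewise: the first $s$ rows of $Dw$ are $\alpha_i w_i$ and the rest vanish, so $T(z)\in\lp^jR_{\lp}^n$ holds precisely when $v_{\lp}(w_i)\ge j-v_{\lp}(\alpha_i)$ for $i\le s$, with $w_{s+1},\dots,w_m$ unconstrained. Thus $M_j(T)=Q(\widetilde{M}_j)$, where $\widetilde{M}_j$ is the ``box'' submodule of $R_{\lp}^m$ whose $i$-th factor is $\lp^{\max(0,\,j-v_{\lp}(\alpha_i))}R_{\lp}$ for $i\le s$ and all of $R_{\lp}$ for $i>s$.

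Next I reduce modulo $\lp$. Since $\ov{Q}$ is an isomorphism of $\mathbb{F}^m$ carrying $\ov{\widetilde{M}_j}$ onto $\ov{M_j(T)}$, the two have equal dimension. Because $\widetilde{M}_j$ is a direct product of submodules of $R_{\lp}$, its reduction is the span of those coordinate lines whose factor surjects onto $\mathbb{F}$: the $i$-th coordinate with $i\le s$ contributes exactly when $\max(0,j-v_{\lp}(\alpha_i))=0$, i.e. when $v_{\lp}(\alpha_i)\ge j$, while each coordinate $i>s$ always contributes. Hence
$$\dim\ov{M_j(T)}=(m-s)+\bigl|\{\,i\le s: v_{\lp}(\alpha_i)\ge j\,\}\bigr|.$$
Finally, the inclusions $M_{j+1}(T)\subset M_j(T)$ give $\ov{M_{j+1}(T)}\subseteq\ov{M_j(T)}$, so $\dim(\ov{M_j}/\ov{M_{j+1}})=\dim\ov{M_j}-\dim\ov{M_{j+1}}$; telescoping the count yields $\bigl|\{i:v_{\lp}(\alpha_i)\ge j\}\bigr|-\bigl|\{i:v_{\lp}(\alpha_i)\ge j+1\}\bigr|=\bigl|\{i:v_{\lp}(\alpha_i)=j\}\bigr|=e_j$, as claimed. (The case $j=0$ is worth checking separately: it recovers $\dim\ov{M_0}=m$ and $e_0=|\{i:v_{\lp}(\alpha_i)=0\}|$, the $\lp$-rank.)

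The argument is essentially formal, so I do not expect a serious obstacle; the only points requiring care are the two places where reduction modulo $\lp$ must be handled correctly. First, that the change of basis descends compatibly — which works only because $Q$ is a genuine $R_{\lp}$-automorphism, so $\ov{\,\cdot\,}$ commutes with applying $Q$. Second, that $\ov{\,\cdot\,}$ of a product lattice is computed factor-by-factor, which is what turns the diagonal description of $\widetilde{M}_j$ into the clean dimension count above.
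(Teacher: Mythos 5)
Your proof is correct and is exactly the computation the paper has in mind: the paper offers no written proof beyond the remark that the lemma ``follows from the structure theorem,'' and the standard way to make that precise is precisely your reduction to the diagonal form $P[T]Q$, where $M_{j}(T)$ becomes a product of ideals and the dimension count $\dim\ov{M_{j}(T)}=(m-s)+|\{i\le s: v_{\lp}(\alpha_{i})\ge j\}|$ telescopes to give $e_{j}$. The two points you flag as needing care (that $Q$ commutes with reduction mod $\lp$, and that reduction of a product lattice is computed factorwise) are handled correctly.
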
  
So we have,
\begin{equation}\label{dim}
\dim(\ov{M_{j}(T)})-\dim(\ov{\ker(T)})=\sum\limits_{t\geq j} e_{t}.  
\end{equation}
Following the notation in $\S\ref{nota}$, given a matrix $C \in M_{n \times m}(\bb{Z})$, the finite part of $\bb{Z}^{n}/ C(\bb{Z}^{m})$ is denoted by $\mathrm{Ab}(C)$.
The following lemma, which will be applied frequently, is Lemma 3.1 of \cite{SD}. We include a short proof for the convenience of the reader.
\begin{lemma}\label{main}
Let $C$ be an $n\times m$ integer matrix. Fix a prime $\lp$ and let $d=v_{\lp}(\left|\mathrm{Ab}(C)\right|)$. Let $M_{i}:=M_{i}(C)$ be as defined above and $e_{i}:=e_{i}(\lp)$ be the $\lp$-elementary divisors of $C$. Suppose that we have two sequences of integers $0<t_{1}<t_{2} \ldots <t_{j}$ and $s_{1}>s_{2} \ldots >s_{j}>s_{j+1}=\dim(\ov{\ker(C)})$ satisfying the following conditions.
\begin{enumerate} [label=(\Alph*)]
\item $\dim(\ov{M_{t_{i}}})\geq s_{i}$, for all $1\leq i \leq j$. 
\item $d=\sum \limits_{i=1}^{j}(s_{i}-s_{i+1})t_{i}$.
\end{enumerate}
Then the following hold.
\begin{enumerate}[label=(\alph*)]
\item $e_{0}=m-s_{1}$.
\item $e_{t_{i}}=s_{i}-s_{i+1}$.
\item $e_{a}=0$ for $a \notin \{t_{1} \ldots t_{i}, \ldots t_{j}\}$.
\end{enumerate}
\end{lemma}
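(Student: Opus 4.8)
The plan is to extract the $\lp$-elementary divisor multiplicities directly from the filtration $\ov{M_{0}}\supseteq\ov{M_{1}}\supseteq\cdots$ using Lemma \ref{eldivcal}, combined with the mass constraint coming from $d=v_{\lp}(|\mathrm{Ab}(C)|)$. The starting point is the identity \eqref{dim}, which for each index $t$ gives $\dim(\ov{M_{t}})-\dim(\ov{\ker(C)})=\sum_{s\geq t}e_{s}$. Since the multiplicities $e_{s}$ are nonnegative, the partial sums $\sum_{s\geq t}e_{s}$ are nonincreasing in $t$, and $e_{0}=m-\dim(\ov{M_{1}})$ together with $\dim(\ov{M_{0}})=m$. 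The two hypotheses (A) and (B) will be shown to force equality in the obvious lower bounds, which is what pins down the multiplicities exactly.

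First I would record the elementary bounds. From \eqref{dim} applied at $t=t_{i}$ and $t=t_{i+1}$ (with the convention $t_{j+1}=+\infty$, so that the sum $\sum_{s\geq t_{j+1}}e_{s}=0$), we obtain
\[
\sum_{t_{i}\le s<t_{i+1}}e_{s}=\dim(\ov{M_{t_{i}}})-\dim(\ov{M_{t_{i+1}}})\geq s_{i}-s_{i+1},
\]
where the inequality uses hypothesis (A), namely $\dim(\ov{M_{t_{i}}})\geq s_{i}$, together with the upper bound $\dim(\ov{M_{t_{i+1}}})\leq\dim(\ov{M_{t_{i}}})$; more precisely one compares $\dim(\ov{M_{t_{i}}})\geq s_i$ against $\dim(\ov{M_{t_{i+1}}})$, whose largest value consistent with the total mass is what we control next. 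The key weighting step is to form the total $\lp$-valuation $\sum_{s>0}s\,e_{s}=d$, which equals $v_{\lp}(|\mathrm{Ab}(C)|)$ by definition of the elementary divisors and the structure theorem. The strategy is then to bound $d$ from below by grouping the indices $s$ into the intervals $[t_{i},t_{i+1})$ and using $s\geq t_{i}$ on each interval, yielding
\[
d=\sum_{s>0}s\,e_{s}\ \geq\ \sum_{i=1}^{j}t_{i}\!\!\sum_{t_{i}\le s<t_{i+1}}\!\!e_{s}\ \geq\ \sum_{i=1}^{j}t_{i}(s_{i}-s_{i+1}),
\]
and hypothesis (B) says the right-hand side already equals $d$. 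Hence both inequalities must be equalities throughout.

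The second inequality being an equality forces $\sum_{t_{i}\le s<t_{i+1}}e_{s}=s_{i}-s_{i+1}$ for every $i$, and the first being an equality forces the $e_{s}$ in each interval to be concentrated at the left endpoint $s=t_{i}$ (since any mass placed at $s>t_{i}$ would make $\sum s\,e_s$ strictly larger); this gives conclusion (b), $e_{t_{i}}=s_{i}-s_{i+1}$, and conclusion (c), $e_{a}=0$ for $a\notin\{t_{1},\dots,t_{j}\}$ with $a>0$. Finally, summing all the $e_{t_i}$ gives $\sum_{s>0}e_{s}=s_{1}-s_{j+1}=s_{1}-\dim(\ov{\ker(C)})$, and since $\dim(\ov{M_{1}})-\dim(\ov{\ker(C)})=\sum_{s\geq 1}e_{s}$ by \eqref{dim}, we get $e_{0}=m-\dim(\ov{M_{1}})=m-s_{1}$, which is conclusion (a). I expect the main subtlety to be the bookkeeping that an equality in a sum of the form $\sum_i t_i(\sum_{s\in I_i}e_s)$ with $s\ge t_i$ on $I_i$ forces all the weight onto the endpoints; this is the step where one must be careful that the index ranges partition $\{s:s>0,\ e_s\neq 0\}$ without overlap and that no positive mass can escape to indices above $t_j$, which is guaranteed because the mass below $t_1$ is also pinned (the intervals start at $t_1>0$ and the total count $s_1-\dim(\ov{\ker(C)})$ is exactly accounted for).
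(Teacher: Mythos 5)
Your overall strategy is the same as the paper's: bound $d=\sum_{i>0} i\,e_{i}$ from below by grouping the indices into the intervals $[t_{i},t_{i+1})$, replace $i$ by $t_{i}$ on each interval, compare with hypothesis (B), and conclude that every inequality in the chain is an equality, which pins down the $e_{i}$. Your equality analysis and the derivation of (a), (b), (c) from it are correct and match the paper's.

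There is, however, one step that does not hold as written: the displayed claim
\[
\sum_{t_{i}\le s<t_{i+1}}e_{s}=\dim(\ov{M_{t_{i}}})-\dim(\ov{M_{t_{i+1}}})\geq s_{i}-s_{i+1}
\]
is not a consequence of hypothesis (A). Condition (A) only bounds each $\dim(\ov{M_{t_{i}}})$ from below; it gives no upper bound on $\dim(\ov{M_{t_{i+1}}})$, so the difference can be strictly smaller than $s_{i}-s_{i+1}$ (take $\dim(\ov{M_{t_{i}}})=s_{i}$ and $\dim(\ov{M_{t_{i+1}}})=s_{i+1}+2$, say). You partially acknowledge this, but the weighted inequality $\sum_{i}t_{i}\sum_{t_{i}\le s<t_{i+1}}e_{s}\ \geq\ \sum_{i}t_{i}(s_{i}-s_{i+1})$ that you actually use still needs a justification that does not pass through the false termwise bound. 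The correct argument is summation by parts: writing $D_{i}=\dim(\ov{M_{t_{i}}})$ for $1\le i\le j$, $D_{j+1}=s_{j+1}=\dim(\ov{\ker(C)})$ and $t_{0}=0$, one has $\sum_{i=1}^{j}t_{i}(D_{i}-D_{i+1})=\sum_{i=1}^{j}(t_{i}-t_{i-1})D_{i}-t_{j}s_{j+1}$, and since every increment $t_{i}-t_{i-1}$ is positive, hypothesis (A) gives $\geq\sum_{i=1}^{j}(t_{i}-t_{i-1})s_{i}-t_{j}s_{j+1}=\sum_{i=1}^{j}t_{i}(s_{i}-s_{i+1})$. (This is also the step the paper's proof leaves implicit when it asserts that its inequality follows from (A) and (B).) With that repair your argument is complete and coincides with the paper's; note also that forcing equality in this step yields $D_{i}=s_{i}$ exactly, which is what makes your later assertion $\sum_{t_{i}\le s<t_{i+1}}e_{s}=s_{i}-s_{i+1}$ legitimate a posteriori.
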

\begin{proof}
We have 
\begin{equation}\label{1}
d=\sum\limits_{i\geq 1}ie_{i} 
  \geq \sum \limits_{k=1}^{j-1}\left(\sum\limits_{t_{k}\leq i < t_{k+1}} ie_{i}\right)+\sum_{i \geq t_{j}}ie_{i}
 \geq \sum \limits_{k=1}^{j-1}\left(t_{k}\sum\limits_{t_{k}\leq i < t_{k+1}} e_{i}\right)+t_{j}\sum_{i \geq t_{j}}e_{i}.
\end{equation}

Application of equation \eqref{dim} given above the lemma yields 

\begin{equation*} 
 \sum \limits_{k=1}^{j-1}\left(t_{k}\sum\limits_{t_{k}\leq i < t_{k+1}} e_{i}\right)+t_{j}\sum_{i \geq t_{j}}e_{i}
 = \sum \limits_{k=1}^{j-1}\left(t_{k}(\dim(\ov{M_{t_{k}}})-\dim(\ov{M_{t_{k+1}}}))\right)+t_{j}\left(\dim(\ov{M_{t_{j}}})-\dim(\ov{\ker(C)})\right).
 \end{equation*}
 Now application of conditions (A) and (B) in the statement gives us
\begin{equation}\label{2}
 \sum \limits_{k=1}^{j-1}\left(t_{k}(\dim(\ov{M_{t_{k}}})-\dim(\ov{M_{t_{k+1}}}))\right)+t_{j}\left(\dim(\ov{M_{t_{j}}})-\dim(\ov{\ker(C)})\right)\geq \sum \limits_{i=1}^{j}(s_{i}-s_{i+1})t_{i}=d.
\end{equation}
So the inequalities \eqref{1} and \eqref{2} are in fact equations and thus the lemma follows.    
\end{proof}
The following result is $12.8.4$ of \cite{BH}.
\begin{lemma}\label{eigenval}
Let $C$ be an $n \times n$ integer matrix with an integer eigenvalue $\phi$ with geometric multiplicity $c$. Fix a prime $\lp$ dividing both $\left| \mathrm{Ab}\left(C\right)\right|$ and $\phi$, with $v_{\lp}(\phi)=d$. Then $\dim(\ov{M_{d}}(C)) \geq c$. 
\end{lemma}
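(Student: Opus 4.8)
The plan is to exhibit inside $\ov{M_{d}}$ an $\bb{F}_{\lp}$-subspace of dimension $c$ obtained by reducing the $\phi$-eigenspace of $C$ modulo $\lp$. First I would work over the PID $\bb{Z}_{\lp}$ and set
$$N:=\{z\in \bb{Z}_{\lp}^{n}\mid (C-\phi I)z=0\},$$
the $\lp$-adic $\phi$-eigenspace. The key structural claim is that $N$ is a \emph{direct summand} of $\bb{Z}_{\lp}^{n}$ of rank $c$. Indeed, $\bb{Z}_{\lp}^{n}/N\cong (C-\phi I)\bb{Z}_{\lp}^{n}$ is a submodule of the free module $\bb{Z}_{\lp}^{n}$, hence torsion-free, hence free over the PID $\bb{Z}_{\lp}$; so the exact sequence $0\to N\to \bb{Z}_{\lp}^{n}\to \bb{Z}_{\lp}^{n}/N\to 0$ splits and $N$ is a free direct summand. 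Its rank is $n-\mathrm{rank}_{\bb{Q}_{\lp}}(C-\phi I)=n-\mathrm{rank}_{\bb{Q}}(C-\phi I)=c$, where the middle equality holds because the rank of the rational matrix $C-\phi I$ is unchanged under the field extension $\bb{Q}\subseteq \bb{Q}_{\lp}$, and the last equality is the definition of the geometric multiplicity $c$.

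Next I would pass to the reduction modulo $\lp$. Writing a splitting $\bb{Z}_{\lp}^{n}=N\oplus N'$, purity of the summand $N$ gives $N\cap \lp\bb{Z}_{\lp}^{n}=\lp N$, so
$$\ov{N}=(N+\lp\bb{Z}_{\lp}^{n})/\lp\bb{Z}_{\lp}^{n}\cong N/(N\cap \lp\bb{Z}_{\lp}^{n})=N/\lp N,$$
which has $\bb{F}_{\lp}$-dimension equal to the rank of $N$, namely $c$. This is the step where one must be careful: a general submodule can lose dimension upon reduction modulo $\lp$, and it is precisely the fact that $N$ is a direct summand (equivalently, a pure submodule) that guarantees $\dim_{\bb{F}_{\lp}}\ov{N}=c$ rather than something strictly smaller. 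I expect this purity argument to be the only genuine obstacle in the proof.

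Finally I would verify the inclusion $N\subseteq M_{d}(C)$. For $z\in N$ we have $Cz=\phi z$, and since $v_{\lp}(\phi)=d$ it follows that $\phi z\in \lp^{d}\bb{Z}_{\lp}^{n}$; hence $Cz\in \lp^{d}\bb{Z}_{\lp}^{n}$, that is, $z\in M_{d}(C)$. Reducing modulo $\lp$ yields $\ov{N}\subseteq \ov{M_{d}(C)}$, and therefore $\dim(\ov{M_{d}(C)})\geq \dim(\ov{N})=c$, as required. Once the purity step is established, both the inclusion $N\subseteq M_{d}(C)$ and the concluding dimension count are immediate.
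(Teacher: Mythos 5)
Your proof is correct and follows essentially the same route as the paper: both identify the $\lp$-adic $\phi$-eigenspace $V_{\phi}\cap\bb{Z}_{\lp}^{n}$ as a pure (direct-summand) submodule of rank $c$ contained in $M_{d}(C)$, and use purity to see that its reduction modulo $\lp$ retains dimension $c$ inside $\ov{M_{d}(C)}$. The only difference is that you spell out the splitting argument (torsion-freeness of the quotient) that the paper leaves implicit when asserting purity.
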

\begin{proof}
Let $V_{\phi}$ be the eigenspace of $\bb{Q}_{\lp}^{n}$. Then $V_{\phi}\cap \bb{Z}_{\lp}^{n}$ is a pure $\bb{Z}_{\lp}$-submodule ($\bb{Z}_{\lp}$-direct summand) of
$\bb{Z}_{\lp}^{n}$ of rank $c$. It is clear that $V_{\phi}\cap \bb{Z}_{\lp}^{n} \subset M_{d}(C)$. As $V_{\phi}\cap \bb{Z}_{\lp}^{n}$ is pure, we have $\ov{V_{\phi}\cap \bb{Z}_{\lp}^{n}} \subset \ov{M_{d}(C)}$. 
\end{proof}
\section{Nilpotence of $A$ and $K$ modulo $\lp$.}\label{primes}
We recall from \S\ref{nota} that $\Gamma(V)$ is an SRG with parameters $(v,k,\lambda,\mu)$ specified in Lemma \ref{par}. Following notations fixed in \S\ref{nota}, $A$ will denote the adjacency matrix of $\Gamma(V)$ and $L=kI-A$ will denote the Laplacian matrix. By $J$, we denote the all-one matrix of same size as $A$. We also recall from Lemma \ref{rel} that $A$ has eigenvalues $k$, $r$, $s$, with multiplicities $1$, $f$ , and $g$ respectively; and that $L$ has eigenvalues $0$, $t=k-r$, $u=k-s$, with multiplicities $1$, $f$ , and $g$ respectively. The values of $r$, $s$, $t$, $u$, $f$, and $g$ are specified in Lemma \ref{rel}. We also observed that $|S|=kr^{f}s^{g}$ and that $|K|=\dfrac{t^{f}u^{g}}{v}$.    

Deducing from Lemma \ref{rel} that $k= -\tilde{q} s\dfrac{r}{\tilde{q}-1}$, we see that $\lp \mid |S|$ if and only if $\lp \mid \tilde{q}rs$. Since $\dfrac{tu}{v}$ is an integer, we see that $\lp \mid |K|$ if and only if $\lp \mid tu$. In the context of Lemma \ref{eigenval} and Lemma \ref{main}, it is useful to investigate the $\lp$-adic valuations of eigenvalues $r$, $s$ of $A$; and those of eigenvalues $t$ and $u$ of $L$. 

Given $X \in M_{n \times n}(\bb{Z})$, by $\ov{X}_{\lp}$ we denote the reduction of $X$ modulo $\lp$. The matrix $\ov{X}_{\lp}$ is nilpotent if and only if all eigenvalues of $X$ are divisible by $\lp$. Now the discussion in the above paragraph and enables us to make the following observations.

1) Since the $\tilde{q}$ is coprime to both $r$ and $s$, we see that $\ov{A}_{\lp}$ is nilpotent if and only if $\lp \mid r$ and $\lp \mid s$.

2) $\ov{L}_{\lp}$ is nilpotent if and only if $\lp \mid t$ and $\lp \mid u$.
      
The following Lemma completely classifies all the pairs $(\Gamma(V), \lp)$ for which $\ov{A}_{\lp}$ or $\ov{L}_{\lp}$ is nilpotent. 
\begin{lemma}
Consider the graph $\Gamma(V)$ and let $X$ be either the adjacency matrix or the Laplacian matrix of $\Gamma(V)$. Let $\lp$ be a prime and $\ov{X}_{\lp}$ be the reduction of $X$ $\pmod{\lp}$. Then conditions for nilpotence of $\ov{X}_{\lp}$ are encoded in Table \ref{nilconst}. 
\end{lemma}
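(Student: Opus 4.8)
The plan is to reduce the nilpotence of $\ov{X}_{\lp}$ to purely arithmetic divisibility conditions and then match these conditions against the entries of Table \ref{nilconst} one family at a time. By observations (1) and (2) preceding the statement, $\ov{A}_{\lp}$ is nilpotent if and only if $\lp \mid r$ and $\lp \mid s$, while $\ov{L}_{\lp}$ is nilpotent if and only if $\lp \mid t$ and $\lp \mid u$. First I would record that in every case $\lp \neq p$: from the closed forms in Lemma \ref{rel} we have $r = \tilde{q}^{z-1}-1 \equiv -1$ and $s = -(\tilde{q}^{z-2+h}+1) \equiv -1 \pmod{p}$, since the relevant exponents of $\tilde{q}$ are positive and $\tilde{q}$ is a power of $p$; the same reduction of the factors of $t$ and $u$ gives $t \equiv u \equiv 1 \pmod{p}$, so $p \nmid tu$. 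Hence throughout the analysis $q$ (equivalently $\tilde{q}$) is a unit modulo $\lp$.

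Next I would set up the elementary number theory. Although $h$ is a half-integer in the Hermitian families, the quantities $\tilde{q}^{z-1}$, $\tilde{q}^{z-2+h}$, and $\tilde{q}^{z-1+h}$ are genuine integer powers of $q$ there, since $\tilde{q}=q^{2}$; so I would express every eigenvalue as an integer power of $q$ and set $d=\mathrm{ord}_{\lp}(q)$ for odd $\lp$. The engine of the proof is the standard trio of criteria: $\lp \mid q^{n}-1$ iff $d \mid n$; $\lp \mid q^{n}+1$ iff $d$ is even and $n \equiv d/2 \pmod{d}$; and, for the factors ${z-1 \brack 1}_{\tilde{q}}$ and ${z \brack 1}_{\tilde{q}}$ appearing in $t$ and $u$, the divisibility $\lp \mid \frac{\tilde{q}^{N}-1}{\tilde{q}-1}$ holds iff $\lp \mid N$ when $\lp \mid \tilde{q}-1$ (because the sum $1+\tilde{q}+\cdots+\tilde{q}^{N-1}\equiv N \pmod{\lp}$), and iff $\mathrm{ord}_{\lp}(\tilde{q}) \mid N$ otherwise. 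For $\lp=2$ these criteria degenerate: with $q$ odd every $q^{n}\pm 1$ is even, so nilpotence at $2$ is governed only by the parity of $m$ and the oddness of $q$, and I would treat this as a separate branch.

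With these tools in hand I would substitute the explicit values $r=\tilde{q}^{z-1}-1$, $s=-(\tilde{q}^{z-2+h}+1)$, $t={z-1 \brack 1}_{\tilde{q}}(\tilde{q}^{z-1+h}+1)$, and $u=(\tilde{q}^{z-2+h}+1){z \brack 1}_{\tilde{q}}$ from Lemma \ref{rel} and run through the six families, each with its prescribed $h$ and $z$, splitting into $\lp$ odd and $\lp=2$. For the adjacency matrix the simultaneous conditions $\lp \mid r$ and $\lp \mid s$ force $\tilde{q}^{z-1}\equiv 1$ and $\tilde{q}^{z-2+h}\equiv -1 \pmod{\lp}$; combining these pins down $\tilde{q}$ modulo $\lp$ (for instance in the elliptic case one obtains $\lp \mid q+1$ together with $m$ even, and in the symplectic case one sees the two conditions are incompatible for odd $\lp$), from which I would read off each entry. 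For the Laplacian the conditions $\lp \mid t$ and $\lp \mid u$ are disjunctions, as $t$ and $u$ are each a product of a repunit factor and a $(\,\cdot\,+1)$ factor, and I would enumerate which combination of factors $\lp$ divides in each family.

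The main obstacle is not any single deep step but the sheer bookkeeping. The Laplacian conditions are disjunctive and the resulting cases must be reconciled across all six families, while the $\lp=2$ branch behaves qualitatively differently and has to be analysed by hand. Verifying that every combination produced by this case analysis matches exactly the entries tabulated in Table \ref{nilconst}, with no case omitted or double-counted, is where the care is required.
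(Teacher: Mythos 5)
Your proposal is correct and takes essentially the same approach as the paper: the paper's entire proof is the one-line observation that $\ov{X}_{\lp}$ is nilpotent if and only if $\lp$ divides all three eigenvalues of $X$, with the verification of Table \ref{nilconst} left implicit. Your reduction to divisibility of $r,s$ (resp.\ $t,u$), the remark that $p\nmid rstu$, and the family-by-family arithmetic check simply carry out in detail what the paper leaves to the reader.
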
 
\small 
\begin{table}[H]
\begin{tabular}{|c|c|c|c|}
\hline
$\Gamma(V)$ & $\lp$ &  Arithmetic conditions & Nilpotence of $\ov{X}_{\lp}$\\
\hline
\multirow{2}{*}{$\Gamma_{s}(q,m)$ or $\Gamma_{o}(q,m)$} & $\lp=2$ & $q$ is odd & True \\
\cline{2-4}
& $\lp \neq 2$ & & False\\
\hline
\multirow{4}{*}{$\Gamma_{o-}(q,m)$}  & $\lp=2$ & $q$ is odd & True \\
\cline{2-4}
& $\lp \neq 2$ & ${\lp} \mid q+1$ and $m$ is even & True\\
\cline{2-4}
& $\lp \neq 2$ & ${\lp} \mid q+1$ and $m$ is odd & True for $\ov{L}_{\lp}$ and False for $\ov{A}_{\lp}$\\
\cline{2-4}
& $\lp \neq 2$ & ${\lp}\nmid q+1$ & False\\
\hline
\multirow{4}{*}{$\Gamma_{o+}(q,m)$}  & $\lp=2$ & $q$ is odd & True \\
\cline{2-4}
& $\lp \neq 2$ & ${\lp} \mid q+1$ and $m$ is odd & True\\
\cline{2-4}
& $\lp \neq 2$ & ${\lp} \mid q+1$ and $m$ is even & True for $\ov{L}_{\lp}$ and False for $\ov{A}_{\lp}$\\
\cline{2-4}
& $\lp \neq 2$ & ${\lp} \nmid q+1$ & False\\
\hline
\multirow{2}{*}{$\Gamma_{ue}(q,m)$ or $\Gamma_{uo}(q,m)$} & \multirow{2}{*}{$\lp$} & $\lp \mid q+1$ & True \\
\cline{3-4}
& & $\lp \nmid q+1$ & False\\
\hline
Any polar graph & $\lp=2$ & $q$ is even & False.\\
\hline
\end{tabular}
\caption{Conditions on $\lp$.}
\label{nilconst}
\end{table}
\normalsize
The proof follows by observing that $\ov{X}_{\lp}$ is nilpotent if and only if all three eigenvalues of $X$ are divisible by $\lp$.

Finding $\lp$-elementary divisors of $S$ and $K$ in the ``non-nilpotent'' cases is a bit easier. Lemma \ref{rel} gives us $(A-rI)(A-sI)=\mu J$ and  $(L-tI)(L-uI)=-\mu J$.
In this case Lemma \ref{eigenval} and the equations above help us construct two integer sequences satisfying the hypothesis of Lemma \ref{main}. We will do these computations in \S\ref{easy}.  

In the ``nilpotent'' case, we use representation theory of $\G$, the group of form preserving linear isomorphisms of $V$. Let us consider the case when $\lp$ is a ``nilpotent'' prime. 
We may treat $A$ and $L$ as elements of $\mathrm{End}_{\bb{Z}_{\lp}}(\bb{Z}_{\lp}\Po)$, where $\bb{Z}_{\lp}\Po$ is the free $\bb{Z}_{\lp}$ module with $\Po$ (vertex set of $\Gamma(V)$) as a basis. The action on $\Po$ by elements of the group $\G$ preserves adjacency and thus commutes with the actions of $A$ and $L$.
This implies that the $\bb{F}_{\lp}\Po$ subspaces $\ov{M}_{i}(A)$ and $\ov{M}_{i}(L)$ constructed as in \S\ref{snf} are also $\bb{F}_{\lp}\G$-submodules of the permutation module $\bb{F}_{\lp}\Po$. The action of $\G$ on $\Po$ has permutation rank $3$. The submodule structure of the permutation module $\bb{F}_{\lp}\Po$ has been determined in \cite{Lie1}, \cite{Lie2}, \cite{LST}, and \cite{ST} in {\it cross-characteristics}, that is, when
$\lp\nmid \tilde{q}$. We use these results along with Lemma \ref{main} to finish our computations.

\section{ When $\ov{A}_{\lp}$ and $\ov{L}_{\lp}$ are not nilpotent.}\label{easy}
In this section we deal with $\Gamma(V)$ and a prime $\lp$ such that $\ov{A}_{\lp}$ and $\ov{L}_{\lp}$ are not nilpotent. Table \ref{nilconst} can be used to look up all possible pairs $(V,\lp)$ such that $\ov{A}_{\lp}$ (equivalently $\ov{L}_{\lp}$) are not nilpotent.
\subsection{Elementary divisors of $S$}
The graph $\Gamma(V)$ is one of $\Gamma_{s}(q,m)$, $\Gamma_{o}(q,m)$, $\Gamma_{o-}(q,m)$, $\Gamma_{o+}(q,m)$, $\Gamma_{uo}(q,m)$ and $\Gamma_{ue}(q,m)$. Following the notation in Lemma \ref{rel}, we have $r={z-1 \brack 1}_{\tilde{q}}(\tilde{q}-1)$, $s=-(\tilde{q}^{z-2+h}+1)$, $\mu={z-1 \brack 1}_{\tilde{q}}(\tilde{q}^{z-2+h}+1)$, and $k=\tilde{q} \mu$. Here $\tilde{q}=q^2$  for $\Gamma_{uo}(q,m)$ and $\Gamma_{uo}(q,m)$; and $\tilde{q}=q$ for other graphs. 

If $\lp \mid |S|$, we saw in \S\ref{primes} that $\ov{A}_{\lp}$ is not nilpotent if and only if $\lp$ does not divide $r$ and $s$ simultaneously. Assume that $\lp$ does not divide $r$ and $s$ simultaneously, and that $\lp \mid |S|$. As $|S|=kr^{f}s^{g}$ and $k=\tilde{q}s \dfrac{r}{\tilde{q}-1}$, we see that $\lp$ divides exactly one of $\tilde{q}$, $r$, and $s$.

In this subsection, we identity $\ov{A}_{\lp}$ with $\ov{A}$ and $\ov{M}_{i}(A)$ with $\ov{M}_{i}$. 
\paragraph{Case 1: $\lp \mid r$ and $\lp \nmid s\tilde{q}$.}

 We set $v_{\lp}({z-1 \brack 1}_{\tilde{q}})=a$ and $v_{\lp}(\tilde{q}-1)=w$. Then $v_{\lp}(s)=0$, $v_{\lp}(k)=v_{\lp}(\mu)=a$, $v_{\lp}(r)=a+w$, and $v_{\lp}(|S|)=(a+w)f+a$. As $\lp \mid r$, one of $a$ and $w$ is necessarily non-zero.

By Lemma \ref{rel}, $(\mathfrak{z}-k)(\mathfrak{z}-s)^{g}(\mathfrak{z}-r)^{f}$ is the characteristic equation of $A$. Reducing modulo $\lp$, we see that $\mathfrak{z}^{f}(\mathfrak{z}-\ov{k})(\mathfrak{z}-\ov{s})^{g}$ is the characteristic polynomial of $\ov{A}$. By Lemma \ref{rel}, we observe that minimal polynomial of $\ov{L}$ divides $(\mathfrak{z}-\ov{k})(\mathfrak{z}-\ov{s})(\mathfrak{z})$, and thus all the Jordan blocks of $\ov{L}$ associated with $\ov{s}$ have size $1$. Therefore, the geometric multiplicity of $\ov{s}$ as an eigenvalue of $\ov{A}$ is $g$. We can now conclude that $\dim(\Ima(\ov{A}-\ov{s}I))= f+1$.

Lemma \ref{rel} give us $A(A-sI)=-r(A-sI)+\mu J$. Since $a=v_{\lp}(\mu) \leq v_{\lp}(r)$, we see that $\Ima(\ov{A}-\ov{s}I) \subset \ov{M}_{a}$. Thus $\dim(\ov{M}_{a}) \geq f+1$.

As $r$ is an eigenvalue of valuation $a+w$, Lemma \ref{eigenval} implies that $\dim(\ov{M}_{a+w}) \geq f$.

We apply Lemma \ref{main} to conclude the following.
\begin{enumerate}
\item Assume that $w=0$, then $a\neq 0$. As $A$ is non-singular, $\ker(A)=\{0\}$. Now by Lemma \ref{main}, setting $j=1$, $t_{1}=a$, $s_{1}=f+1 \leq \dim(\ov{M}_{a})$, $s_{2}=0=\dim(\ov{\ker(A)})$, we have
 $e_{a}=f+1$, $e_{0}=g$, and $e_{i}=0$ for all other $i$.
\item Assume $a=0$, then $w \neq 0$. As $A$ is non-singular, $\ker(A)=\{0\}$. Now by Lemma \ref{main}, setting $j=1$, $t_{1}=w$, $s_{1}=f \leq \dim(\ov{M}_{w})$, $s_{2}=0$, we have
 $e_{w}=f$, $e_{0}=g+1$ and $e_{i}=0$ for all other $i$. 
\item Assume $aw\neq 0$. As $A$ is non-singular, $\ker(A)=\{0\}$. Now by Lemma \ref{main}, setting $j=2$, $t_{1}=a+w,t_{2}=w$ $s_{1}=f+1 \leq \dim(\ov{M}_{w})$, $s_{2}=f \leq \dim(\ov{M}_{w})$, $s_{3}=0$, we have
 $e_{a+w}=1$, $e_{a}=f,$ $e_{0}=g$, and $e_{i}=0$ for all other $i$. 
\end{enumerate}

\paragraph{Case 2: $\lp \mid s$ and $\lp \nmid r\tilde{q}$.}

Set $v_{\lp}(s)=d$. As $\lp \nmid r$, we have $v_{\lp}(r)=0$. Then $v_{\lp}(k)=v_{\lp}(\mu)=d$, and $v_{\lp}(|S|)=dg+d$.

Lemma \ref{rel} gives us $A(A-rI)=-s(A-rI)+\mu J$. This shows that $\Ima(\ov{A}-\ov{r}I) \subset \ov{M}_{d}$. 
By Lemma \ref{rel}, $\mathfrak{z}(\mathfrak{z}-s)^{g}(\mathfrak{z}-r)^{f}$ is the characteristic polynomial of $A$. Reducing mod $\lp$, we see that $\ov{\mathfrak{z}}^{g+1}(\mathfrak{z}-\ov{r})^{f}$ is the characteristic polynomial of $\ov{A}$. Also Lemma \ref{rel}, we can deduce that $\mathfrak{z}(\mathfrak{z}-\ov{r})$ is the minimal polynomial of $\ov{A}$, and thus the geometric multiplicity of $\ov{r}$ as an eigenvalue of $\ov{A}$ is $f$. We can now conclude that $\dim(\Ima(\ov{A}-\ov{r}I))= g+1$.
Therefore $\dim(\ov{M}_{d})\geq g+1$. So by Lemma \ref{main}, setting $j=1$, $s_{1}=g+1$, $s_{2}=0$, and $t_{1}=d$, we have $e_{d}=g+1$, $e_{0}=f$ and $e_{i}=0$ for all other $i$. 

\paragraph{Case 3: $\lp \mid \tilde{q}$ and $\lp \nmid rs$.}

Set $v_{\lp}(\tilde{q})=\epsilon$. Then $v_{\lp}(k)=v_{\lp}(|S|)=a$. 
As $k$ is an eigenvalue of valuation $\epsilon$, Lemma \ref{eigenval} shows that $\dim(\ov{M}_{\epsilon}) \geq 1$. Thus by Lemma \ref{main}, we deduce that  $e_{\epsilon}=1$, $e_{0}=f+g$, and $e_{i}=0$ for all other $i$.

\subsection{Elementary divisors of $K$}
The graph $\Gamma(V)$ is one of $\Gamma_{s}(q,m)$, $\Gamma_{o}(q,m)$, $\Gamma_{o-}(q,m)$, $\Gamma_{o+}(q,m)$, $\Gamma_{uo}(q,m)$ and $\Gamma_{ue}(q,m)$. Following the notation in Lemma \ref{rel}, we have $\mu={z-1 \brack 1}_{\tilde{q}}(\tilde{q}^{z-2+h}+1)$, $t={z-1 \brack 1}_{\tilde{q}}(\tilde{q}^{z-1+h}+1)$,
  $u=(\tilde{q}^{z-2+h}+1){z \brack 1}_{\tilde{q}}$. and $v={z \brack 1}_{\tilde{q}}(\tilde{q}^{z-1+h}+1)$. Here $\tilde{q}=q^2$ for $\Gamma_{uo}(q,m)$ and $\Gamma_{uo}(q,m)$;  and $\tilde{q}=q$  for other graphs. 
  
If $\lp \mid |K|$, we saw in \ref{primes} that $\ov{L}_{\lp}$ is not nilpotent if and only if $\lp$ does not divide $t$ and $u$ simultaneously. Assume that $\lp$ does not divide $t$ and $u$ simultaneously, and that $\lp \mid |K|$. We recall that $|K|=t^{f}u^{g}/v$. 

In this subsection, we identity $\ov{L}_{\lp}$ with $\ov{L}$ and $\ov{M}_{i}(L)$ with $\ov{M}_{i}$. 

\paragraph{Case 1: $\lp \mid t$ and $\lp \nmid u$.}
In this case, $v_{\lp}(t)>0$ and $v_{\lp}(u)=0$. We set $v_{\lp}({z-1 \brack 1}_{\tilde{q}})=a$ and $v_{\lp}((\tilde{q}^{z-1+h}+1))=c$. Now, we have $v_{\lp}(t)=a+c$, $v_{\lp}(\mu)=a$, and $v_{\lp}(v)=c$. Since $|K|=t^{f}u^{g}/v$, we have $v_{\lp}(|K|)=(a+c)f-c$.
As $L$ is a matrix of nullity $1$, we have $\dim(\ov{\ker(L)})=1$.
As $t$ is an eigenvalue of $\lp$-valuation $a+c$ and geometric multiplicity $f$. So Lemma \ref{eigenval} implies that $\dim(\ov{M}_{a+c}) \geq f$.

Lemma \ref{rel} gives us $L(L-uI)=-t(L-uI)-\mu J$. So $\Ima(\ov{L}-\ov{u}I)\subset \ov{M}_{a}$. Again by Lemma \ref{rel}, $\mathfrak{z}(\mathfrak{z}-u)^{g}(\mathfrak{z}-t)^{f}$ is the characteristic polynomial of $L$. Reducing mod $\lp$, we see that $\mathfrak{z}^{f+1}(\mathfrak{z}-\ov{u})^{g}$ is the characteristic polynomial of $\ov{L}$. From Lemma \ref{rel} we deduce that minimal polynomial of $\ov{L}$ divides $\mathfrak{z}^2(\mathfrak{z}-\ov{u})$. Thus all the Jordan blocks of $\ov{L}$ associated with $\ov{u}$ are of size $1$. Therefore the geometric multiplicity of $\ov{u}$ as an eigenvalue of $\ov{L}$ is $g$. We can now conclude that $\dim(\Ima(\ov{L}-\ov{u}I))=f+1$, and thus $\dim(\ov{M}_{c}) \geq f+1$.

Using Lemma \ref{main}, we arrive at the following conclusions.
\begin{enumerate}
\item Assume that $a=0$, then $c \neq 0$. So by Lemma \ref{main}, setting $j=1$, $s_{1}=f$, $s_{2}=\dim(\ov{\ker(L)})=1$ and $t_{1}=c$, we have $e_{c}=f-1$, $e_{0}=g+1$, and $e_{i}=0$ for all other $i$.
\item Assume $c=0$, then $a \neq 0$. So by Lemma \ref{main}, setting $j=1$, $s_{1}=f+1$, $s_{2}=\dim(\ov{\ker(L)})=1$ and $t_{1}=a$, we have $e_{a}=f$, $e_{0}=g$, and $e_{i}=0$ for all other $i$.
\item Assume $ac \neq 0$. By Lemma \ref{main}, setting $j=2$, $s_{1}=f+1$, $s_{2}=f$, $s_{3}=\dim(\ov{\ker(L)})=1$ and $t_{1}=c$, $t_{2}=a+c$ we have $e_{0}=g$, $e_{a}=1$, $e_{a+c}=f-1$ and $e_{i}=0$ for all other $i$.   
\end{enumerate}
\paragraph{Case 2: $\lp \mid u$ and $\lp \nmid t$.}
In this case, $v_{\lp}(u)>0$, and $v_{\lp}(t)=0$. We set $v_{\lp}({z \brack 1}_{\tilde{q}})=b$ and $v_{\lp}((\tilde{q}^{z-2+h}+1))=d$.  We have $v_{\lp}(u)=b+d$, $v_{\lp}(\mu)=d$, and $v_{\lp}(v)=b$. Since $|K|=t^{f}u^{g}/v$, we have $v_{\lp}(|K|)=(b+d)g-b$.
As $L$ is a matrix of nullity $1$, we have $\dim(\ov{\ker(L)})=1$.
Since $u$ is an eigenvalue of valuation $d+b$ and geometric multiplicity $g$, Lemma \ref{eigenval} implies $\dim(\ov{M}_{d+b}) \geq g$.

By Lemma \ref{rel}, we have $L(L-tI)=-u(L-tI)-\mu J$. So $\Ima(\ov{L}-\ov{t}I)\subset \ov{M}_{d}$. Lemma \ref{rel} tells that $\mathfrak{z}(\mathfrak{z}-u)^{g}(\mathfrak{z}-t)^{f}$ is the characteristic polynomial of $L$.  Reducing modulo $\lp$, we see that $\ov{\mathfrak{z}}^{f+1}(\mathfrak{z}-\ov{t})^{g}$ is the characteristic polynomial of $\ov{L}$. From Lemma \ref{rel} we deduce that minimal polynomial of $\ov{L}$ divides $\mathfrak{z}^2(\mathfrak{z}-\ov{t})$. Thus all the Jordan blocks of $\ov{L}$ associated with $\ov{t}$ are of size $1$. Therefore the geometric multiplicity of $\ov{t}$ as an eigenvalue of $\ov{L}$ is $f$. We can now conclude $\dim(\Ima(\ov{L}-\ov{t}I))=g+1$ and $\dim(\ov{M}_{d}) \geq g+1$.

We now apply Lemma \ref{main} to conclude the following.
\begin{enumerate}
\item Assume $b=0$, then $d \neq 0$. So by Lemma \ref{main}, setting $j=1$, $s_{1}=g+1$, $s_{2}=\dim(\ov{\ker(L)})=1$ and $t_{1}=d$, we have $e_{0}=f$, $e_{d}=g$, and $e_{i}=0$ for all other $i$.
\item Assume $d=0$, then $b \neq 0$. So by Lemma \ref{main}, setting $j=1$, $s_{1}=g$, $s_{2}=\dim(\ov{\ker(L)})=1$ and $t_{1}=b$, we have $e_{0}=f+1$, $e_{b}=g-1$, and $e_{i}=0$ for all other $i$.
\item Assume $bd \neq 0$. Then by Lemma \ref{main}, setting $j=2$, $s_{1}=g+1$, $s_{2}=g$,  $s_{3}=\dim(\ov{\ker(L)})=1$ and $t_{1}=d$, $t_{2}=d+b$ we have $e_{0}=f$, $e_{d}=1$, $e_{d+b}=g-1$ and $e_{i}=0$ for all other $i$.   
\end{enumerate}

\section{When $\ov{A}_{\lp}$ and $\ov{L}_{\lp}$ are nilpotent.}\label{act}
Let $\lp$ be a prime and $\Gamma(V)$ be a polar graph such that $\ov{A}_{\lp}$ or $\ov{L}_{\lp}$ is nilpotent. In this case, we use representation theory of $\G$ to compute the $\lp$-elementary divisors of $S$ and $K$.  

The action of $\G$ on $\Gamma(V)$ commutes with $A$ and $L$. Thus the vector spaces $\ov{M}_{i}(A)$ and $\ov{M}_{i}(L)$ are in fact $\G$-submodules of $\bb{F}_{\lp}\Po$. We recall that the set $\Po$ which is the set of all singular $1$-spaces in $V$ is the vertex set of $\Gamma{V}$.

The action of $\G$ on $\Po$ is a rank $3$ permutation action. When $\lp$ is not the characteteristic of the field associated with the underlying vector space $V$, the submodule structure of $\bb{F}_{\lp}\Po$ is given in \cite{LST}, \cite{ST}, \cite{Lie1} and \cite{Lie2}. We use the submodule structures present in literature to determine $\ov{M}_{i}(A)$ and $\ov{M}_{i}(L)$ and consequently find the elementary divisors of $S$ and $K$.

We now define some submodules of $\bb{F}_{\lp}\Po$. These are some important submodules of $\bb{F}_{\lp}\Po$ defined in \cite{LST}, \cite{ST}, \cite{Lie1} and \cite{Lie2}.

1) Given any subspace $Z$ of $V$, we denote $[Z]$ to be the sum of all isotropic one-
dimensional subspace of $Z$. We denote $[V]$ by $\mathbf{1}$, henceforth known as the all-one vector.

2) Consider $A$ and $L$ to be elements of $\mathrm{End}(\bb{Q}_{\lp}\Po)$. Define $V_{r}=\ker(A-rI)=\ker(L-tI)$, and $V_{s}=\ker(A-sI)=\ker(L-uI)$. Then define $\ov{V}_{r}$ to be the subspace $ \ov{V_{r} \cap \bb{Z}_{\lp}\Po}$ of $\bb{F}_{\lp}\Po$, and $\ov{V}_{s}$ to be the subspace $ \ov{V_{s} \cap \bb{Z}_{\lp}\Po}$ of $\bb{F}_{\lp}\Po$. As $V_{r} \cap \bb{Z}_{\lp}\Po$ and $V_{s} \cap \bb{Z}_{\lp}\Po$ are pure submodules of $\bb{Z}_{\lp}\Po$, we have $\dim(\ov{V}_{r})=\dim(V_{r})=f$ and $\dim(\ov{V}_{s})=\dim(V_{s})=g$. 
 
3) We define $C$ to be the linear subspace of $\bb{F}_{\lp}\Po$ spanned by 
$$
\{[W]| W\ \text{is a maximal totally isotropic subspace of V}\}.
$$

4) We define $C^{'}$ to be the linear subspace of $\bb{F}_{\lp}\Po$ spanned by
$$
\{[W]-[W']| W, W' \ \text{are maximal totally isotropic subspaces of V}\}.
$$ 

5) We define $U$ to be $(J-\ov{A}_{\lp})(\bb{F}_{\lp}\Po)$, where $J$ is the matrix of all $1'$s.

6) We define $U'$ to be the subspace spanned by $\{(J-\ov{A}_{\lp})(\mathbf{v})-(J-\ov{A}_{\lp})(\mathbf{u})| \ \mathbf{v},\ \mathbf{u} \ \in \Po\}$.

7) Let $(\ ,\ )$ be the symmetric bilinear form on $\bb{F}_{\lp}\Po$ with $\Po$ as an orthonormal basis. If $Z$ is a subspace, then $Z^{\perp}$ denotes the orthogonal complement of $Z$ with respect to $(\ ,\ )$.

In the following Lemma, we collect some inclusion relations involving the modules defined above, $\ov{M}_{i}(A)$'s, and $\ov{M}_{i}(L)$'s.
\begin{lemma}\label{subair1}
Let $\lp$ be a prime and $\Gamma(V)$ be one of \{$\Gamma_{s}(q,m)$,\ $\Gamma_{o}(q,m)$,\ $\Gamma_{o-}(q,m)$,\ $\Gamma_{o+}(q,m)$,\ $\Gamma_{uo}(q,m)$,\ $\Gamma_{ue}(q,m)$\} such  that $\ov{A}_{\lp}$ or $\ov{L}_{\lp}$ is nilpotent. Also let $\tilde{q}$ be the size of the field associated with $V$. Then the following hold.
\begin{enumerate}
\item We have $\ov{V}_{s} \subset \ov{M}_{v_{\lp}(s)}(A)$, $\ov{V}_{r} \subset \ov{M}_{v_{\lp}(r)}(A)$, $\ov{V}_{s} \subset \ov{M}_{v_{\lp}(u)}(L)$, and $\ov{V}_{t} \subset \ov{M}_{v_{\lp}(t)}(L)$.
\item Given $\alpha=v_{\lp}({z-1 \brack 1}_{\tilde{q}})$, we have $C \subset \ov{M}_{\alpha}(A)$ and $C \subset \ov{M}_{\alpha}(L)$. 
\item Given $\beta=v_{\lp}(rs)$, we have $U \subset \ov{M}_{\beta}(A)$.  
\item Given $\gamma=v_{\lp}(tu)$, we have $U'\subset \ov{M}_{\gamma}(L)$
\item If $\lp\mid s$, then we have  $U \subset \ov{M}_{\delta}(L)$. Here $\delta=v_{\lp}(ts)$.
\end{enumerate}
\end{lemma}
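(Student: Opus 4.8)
The plan is to dispatch the five inclusions in roughly increasing order of difficulty, leaning throughout on the quadratic identities of Lemma \ref{rel} together with the explicit factorizations $r={z-1 \brack 1}_{\tilde{q}}(\tilde{q}-1)$, $s=-(\tilde{q}^{z-2+h}+1)$, $\mu={z-1 \brack 1}_{\tilde{q}}(\tilde{q}^{z-2+h}+1)$, $k=\tilde{q}\mu$, $t={z-1 \brack 1}_{\tilde{q}}(\tilde{q}^{z-1+h}+1)$, $u={z \brack 1}_{\tilde{q}}(\tilde{q}^{z-2+h}+1)$ recorded there. Part (1) is immediate from (the proof of) Lemma \ref{eigenval}: since $V_{s}=\ker(A-sI)=\ker(L-uI)$ and $V_{r}=\ker(A-rI)=\ker(L-tI)$ meet $\bb{Z}_{\lp}\Po$ in pure submodules, applying Lemma \ref{eigenval} to the four pairs $(A,s)$, $(A,r)$, $(L,u)$, $(L,t)$ with $\phi$ equal to $s,r,u,t$ gives the four containments verbatim (here $\ov{V}_{t}$ in the statement is $\ov{V}_{r}$, the $t$-eigenspace of $L$).

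For part (2) the key is to compute the action of $A$ and $L$ on a generator $[W]$, where $W$ is a maximal totally isotropic subspace of dimension $z$. Every point of $W$ is orthogonal to every other point of $W$; while for a point $x\notin W$, maximality forces $W\not\subseteq x^{\perp}$, so $x^{\perp}\cap W$ is a hyperplane of $W$. The resulting incidence count gives
\[
A[W]=\left({z \brack 1}_{\tilde{q}}-1-{z-1 \brack 1}_{\tilde{q}}\right)[W]+{z-1 \brack 1}_{\tilde{q}}\mathbf{1}=r[W]+{z-1 \brack 1}_{\tilde{q}}\mathbf{1},
\]
using ${z \brack 1}_{\tilde{q}}-{z-1 \brack 1}_{\tilde{q}}=\tilde{q}^{z-1}$ and $r=\tilde{q}^{z-1}-1$. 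Both terms on the right have $\lp$-valuation at least $\alpha=v_{\lp}({z-1 \brack 1}_{\tilde{q}})$ (indeed $v_{\lp}(r)\geq\alpha$), so $[W]\in M_{\alpha}(A)$ and hence $C\subset\ov{M}_{\alpha}(A)$. Then $L[W]=k[W]-A[W]=t[W]-{z-1 \brack 1}_{\tilde{q}}\mathbf{1}$ with $v_{\lp}(t)\geq\alpha$, giving $C\subset\ov{M}_{\alpha}(L)$.

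Parts (3)--(5) I would handle uniformly by a ``corrected lift'' built from the SRG quadratic relations. For (3), Lemma \ref{rel}(5) gives $A^{2}=(r+s)A-rsI+\mu J$, whence the operator identity $A\bigl(J-A+(r+s)I\bigr)=(k-\mu)J+rsI$. Since $k-\mu=(\tilde{q}-1)\mu$, one checks $v_{\lp}(k-\mu)=v_{\lp}(rs)=\beta$, so the right-hand side vanishes mod $\lp^{\beta}$; thus $\bigl(J-A+(r+s)I\bigr)y\in M_{\beta}(A)$ for every $y$, and as $\lp\mid r+s$ its reduction is $(J-\ov{A}_{\lp})\ov{y}$, proving $U\subset\ov{M}_{\beta}(A)$. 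Parts (4) and (5) concern $L$ and the \emph{difference} module $U'$, so I would restrict to the augmentation submodule $W_{0}=\{y:\sum y_{i}=0\}$ spanned by the vectors $\mathbf{v}-\mathbf{u}$; on $W_{0}$ the matrix $J$ acts as $0$, which is exactly what annihilates the otherwise obstructive $\mu J$ term. Using Lemma \ref{rel}(6) and $t+u+r+s-2k=0$, the analogous manipulation yields
\[
L\bigl(J-A+(r+s-k)I\bigr)x=-tu\,x\qquad(x\in W_{0}),
\]
and since $v_{\lp}(tu)=\gamma$ this lift lies in $M_{\gamma}(L)$ with reduction $(J-\ov{A}_{\lp})\ov{x}$, giving $U'\subset\ov{M}_{\gamma}(L)$. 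Claim (5) then follows from (4): when $\lp\mid s$ one has $\delta=v_{\lp}(ts)\leq v_{\lp}(tu)=\gamma$, so $\ov{M}_{\gamma}(L)\subset\ov{M}_{\delta}(L)$, while the single extra generator $(J-\ov{A}_{\lp})\mathbf{1}\in\langle\mathbf{1}\rangle\subset\ov{\ker(L)}\subset\ov{M}_{\delta}(L)$ accounts for $U$ over $U'$.

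The conceptual steps above are short; the genuine work, and the step I expect to be the main obstacle, is the $\lp$-adic valuation bookkeeping: verifying family-by-family and prime-by-prime that the valuations of $(\tilde{q}-1)\mu$, $rs$, $tu$, $ts$ match $\alpha,\beta,\gamma,\delta$, and especially that the scalar corrections $r+s$ and $r+s-k$ are divisible by $\lp$. This last divisibility is the delicate point, since $\lp\mid r+s-k$ is equivalent to $\lp\mid k$; this holds comfortably when both $\ov{A}_{\lp}$ and $\ov{L}_{\lp}$ are nilpotent, but the regime where only $\ov{L}_{\lp}$ is nilpotent must be examined carefully, and pinning it down through the factorizations of Lemma \ref{rel} is where I anticipate the real difficulty.
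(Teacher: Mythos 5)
Your parts (1), (2) and (3) line up with the paper: (1) is the purity argument from Lemma \ref{eigenval}, (2) is the same incidence count giving $A[W]={z-1 \brack 1}_{\tilde{q}}\mathbf{1}+r[W]$ and $L[W]=-{z-1 \brack 1}_{\tilde{q}}\mathbf{1}+t[W]$, and your identity $A\bigl(J-A+(r+s)I\bigr)=rs(I-J)$ is an equivalent repackaging of the paper's $A\bigl(A-\tfrac{1}{\tilde{q}}J-(r+s)I\bigr)=-rsI$; the worry about $\lp\mid r+s$ is harmless for (3), since in the only-$\ov{L}_{\lp}$-nilpotent cases one computes $r\equiv s\equiv -2\pmod{\lp}$, so $\beta=0$ and the claim is vacuous. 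The trouble starts at (4). You correctly isolate the obstruction: your lift reduces to $(J-\ov{A}_{\lp}+\ov{k}I)\ov{x}$, which equals $(J-\ov{A}_{\lp})\ov{x}$ only if $\lp\mid k$. But $\lp\nmid k$ in exactly the regime you flag ($\Gamma_{o-}$ with $m$ odd and $\Gamma_{o+}$ with $m$ even, $\lp$ odd dividing $q+1$, where $k\equiv -2$), and (4) is invoked there. In that regime what your identity actually places inside $\ov{M}_{\gamma}(L)$ is $\ov{L}(\ov{W_{0}})$ rather than $(J-\ov{A}_{\lp})(\ov{W_{0}})$, so the step you deferred is not a routine valuation check; it does not close as written, and one must work with the module that \cite{ST} identifies there.

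The more serious gap is (5), which you present as complete. You deduce it from (4) by adjoining the ``extra generator'' $(J-\ov{A}_{\lp})\mathbf{1}=(v-k)\mathbf{1}$. But modulo $U'$ one has $(J-\ov{A}_{\lp})\mathbf{1}\equiv v\cdot(J-\ov{A}_{\lp})(\mathbf{v}_{0})$ for a fixed point $\mathbf{v}_{0}$, so this generator spans $U/U'$ only when $\lp\nmid v$. In cases where (5) is actually used one has $\lp\mid v$ (e.g.\ $\Gamma_{s}(q,m)$ with $m$ odd and $\lp=2$, where $v_{2}(v)=v_{2}(q^{m}+1)\geq 1$); then $(J-\ov{A}_{\lp})\mathbf{1}$ already lies in $U'$ and your argument says nothing about the single-point generator $(J-\ov{A}_{\lp})(\mathbf{v})$. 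The paper's proof of (5) supplies precisely the missing ingredient: it exhibits the explicit lift $L(\mathbf{v})-(t+u)\mathbf{v}+J(\mathbf{v})+s[W]$, whose image under $L$ is $-tu\,\mathbf{v}+ts[W]$ (valuation at least $\delta=v_{\lp}(ts)$, using $v_{\lp}(u)\geq v_{\lp}(s)$) and whose reduction is $(J-\ov{A}_{\lp})(\mathbf{v})$ because $\lp\mid s$ forces $\lp\mid k$ and kills the coefficient $-\mu/{z-1 \brack 1}_{\tilde{q}}=s$ of $[W]$. That use of the auxiliary vector $[W]$ together with $L[W]=-{z-1 \brack 1}_{\tilde{q}}\mathbf{1}+t[W]$ is the one idea in the paper's proof that your proposal is missing.
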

\begin{proof}
1. The eigenspace associated with an eigenvalue $\alpha $ of $A$ is the same as the eigenspace associated with eigenvalue $k -\alpha$ of $L$. The proof of (1) now follows from the proof of Lemma \ref{eigenval}.

2.  Let $W$ be a maximal totally isotropic subspace of $V$ and let $\mathbf{v} \in \Po$. 
As $W$ is an isotropic subspace, if $\mathbf{v} \subset W$, then $\mathbf{v}$ is adjacent to every 
other $1$-space of $W$, a total of ${z \brack 1}_{\tilde{q}}-1$.
Assume that $\mathbf{v} \not\subset W$. Let $\mathbf{u} \in \Po$ and $\mathbf{u} \subset W$, then $\mathbf{v}$ is adjacent to $\mathbf{u}$ if and only if $\mathbf{u}$ is one of the ${z-1 \brack 1}_{\tilde{q}}$ $1$-dimensional subspaces of $\mathbf{v}^{\perp} \cap W$.  Here $\mathbf{v}^{\perp}$ is the orthogonal complement of $\mathbf{v}$ with respect to the form on $V$. 
Thus we have $A([W])={z-1 \brack 1}_{\tilde{q}}\mathbf{1}+r[W]$. Since $L=kI-A$, we also have $L([W])=-{z-1 \brack 1}_{\tilde{q}}\mathbf{1}+t[W]$. Using ${z-1 \brack 1}_{\tilde{q}} \mid t$, we arrive at 2. 

3. From Table \ref{nilconst}, we observe that nilpotence of $\ov{A}_{\lp}$ or $\ov{L}_{\lp}$ implies $\lp \mid q+1$. Therefore we have $\ov{A}-\ov{1/q}J\equiv\ov{A}+J \pmod{\lp}$. 
We note that $\Ima(J)=\bb{F}_{\lp}\mathbf{1}$, and thus $\Ima(\ov{A}+J)=\Ima(J-\ov{A})=U$. Using $AJ=kJ$, and $\mu=k/q$ and Lemma \ref{rel}, we have
$A(A-1/qJ-(r+s)I)=-rsI$. As $\lp \mid r$ and $\lp \mid s$, we can conclude 3.

4. This follows by using $LJ=0$, Lemma \ref{rel}, and calculations similar to those above. 

5. Let $\mathbf{v} \in \Po$ and let $W$ be any maximal totally isotropic subspace of $V$. From Lemma \ref{rel}, we have $L(L-(t+u)I)=-tuI-\mu J$. From the computations above, we have $L([W])=-{z-1 \brack 1}_{\tilde{q}}\mathbf{1}+t[W]$. These two observations together with $LJ=0$ give us
\begin{equation}\label{p1}
L\left(L(\mathbf{v})-(t+u)\mathbf{v}+J(\mathbf{v})-\dfrac{\mu}{{z-1 \brack 1}_{\tilde{q}}}[W]\right)=-tuL(\mathbf{v})-\dfrac{t\mu}{{z-1 \brack 1}_{\tilde{q}}}[W].
\end{equation}

Lemma \ref{par} and Lemma \ref{rel} show that $\dfrac{\mu}{{z-1 \brack 1}_{\tilde{q}}} =-s$. Since $\lp \mid s$, we have 
\begin{equation}\label{p2}
L(\mathbf{v})-(t+u)\mathbf{v}+J(\mathbf{v})-\dfrac{\mu}{{z-1 \brack 1}_{\tilde{q}}}[W]\equiv L(\mathbf{v})+J(\mathbf{v}) \pmod{\lp} \equiv J(\mathbf{v}) -A(\mathbf{v}) \pmod{\lp}.
\end{equation}
Now \eqref{p1} and \eqref{p2} yield 5.
\end{proof}

\section{$2$-elementary divisors of $S$ and $K$ when $\Gamma(V)=\Gamma_{s}(q,m)$.} \label{s} 
Given the graph $\Gamma_{s}(q,m)$ and a prime $\lp$, table \ref{nilconst} shows that $\ov{A}_{\lp}$ (equivalently $\ov{L}_{\lp}$) is nilpotent if and only if $\lp=2$ and $q$ is odd. 
In this section we will compute the $2$-elementary divisors of $S$ and $K$ when $\Gamma(V)=\Gamma_{s}(q,m)$ and $q$ is odd. We set $h=1$, and $z=m$ in Lemma \ref{rel} and Lemma \ref{par} to get the parameters for this graph. The graph $\Gamma_{s}(q,m)$ is an SRG with parameters $v={2m \brack 1}_{q}$, $k= q{m-1 \brack 1}_{q}(1+q^{m-1})$, $\lambda= {2m-2 \brack 1}_{q}-2$, and $\mu={2m-2 \brack 1}_{q}$. 

The adjacency matrix $A$ has eigenvalues $(k,r,s)=(k,q^{m-1}-1,-(1+q^{m-1}))$
with multiplicities $(1,f,g)=\left(1,\ \frac{q(q^{m}-1)(q^{m-1}+1)}{2(q-1)},\ \frac{q(q^{m}+1)(q^{m-1}-1)}{2(q-1)}\right)$. So the Laplacian $L$ has eigenvalues $(0,t,u)=(0,k-r,k-s)=\left(0,{m-1 \brack 1}_{q}(1+q^{m}), {m \brack 1}_{q}(1+q^{m-1})\right)$
with multiplicities $(1,f,g)$.

From now on in this section, we denote $\Gamma_{s}(q,m)$ by $\Gamma_{s}$.
\subsection{Submodule Structure}
We now recall from \S\ref{act} the definitions of $C$, $C'$ $U$, $U'$, $\ov{V}_{r}$ and $\ov{V}_{s}$ in the context of the graph $\Gamma_{s}$. In this case $\G=\mathrm{Sp}(2m,q)$.
From Theorem 2.13 and Remark 2.15 of \cite{LST}, we have the following result.
\begin{theorem} \label{lst}
The $\bb{F}_{2}\mathrm{Sp}(2m,q)$ submodule structure for $\bb{F}_{2}\Po$ is given by the following Hasse diagrams.

{\begin{minipage}[t]{0.48\textwidth}
m is even

$\xymatrix@=0.3cm{\bb{F}_{2}\Po \ar@{-}[d]\\ <\mathbf{1}>^{\perp} \ar@{-}[d] \\ C \ar@{-}[d] \\ C' \ar@{-}[d]\\ (C')^{\perp} \ar@{-}[d] \\ C^{\perp} \ar@{-}[d] \\ <\mathbf{1}> \ar@{-}[d] \\ \{0\}} $

\end{minipage}
\begin{minipage}[t]{0.48\textwidth}
m is odd

$\xymatrix@=0.3cm{
 & \bb{F}_{2}\Po \\
 C \ar@{-}[ur] \ar@{-}[dr] &  & <\mathbf{1}>^{\perp}\ar@{-}[ul] \ar@{-}[dl]\\
 & C' \ar@{-}[d] & \\
 & (C')^{\perp} \ar@{-}[dr] \ar@{-}[dl] & \\
 <\mathbf{1}>  \ar@{-}[dr] & & C^{\perp}  \ar@{-}[dl]\\
 & \{0\} &
}$
\end{minipage}}
We have $U=(C')^{\perp}$, $U'=C^{\perp}$, $\dim(C)=f+1$, $\dim(C')=f$, $\dim((C^{'})^{\perp})=g+1$, and $\dim(C^{\perp})=g$.
\end{theorem}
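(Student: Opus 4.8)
The plan is to treat the two Hasse diagrams, together with the dimension equalities $\dim C=f+1$ and $\dim C'=f$, as the data imported from \cite{LST} (Theorem 2.13 and Remark 2.15), and to prove from scratch the two module identifications $U=(C')^{\perp}$ and $U'=C^{\perp}$, deducing the remaining dimensions at the end. Throughout I would use that the form $(\ ,\ )$ with $\Po$ as orthonormal basis is nondegenerate, so $Z\mapsto Z^{\perp}$ is a perfect duality on $\bb{F}_{2}\Po$ with $(Z^{\perp})^{\perp}=Z$ and $\dim Z+\dim Z^{\perp}=v=1+f+g$. Since $A$ and $J$ are symmetric, so is $T:=J-\ov{A}$ (which equals $J+\ov{A}$ over $\bb{F}_{2}$, where $\ov{A}$ is the reduction modulo $2$); hence $T$ is self-adjoint, giving $\Ima(T)^{\perp}=\ker(T)$ and, for any subspace $Y$, $\bigl(T(Y)\bigr)^{\perp}=\{x:T(x)\in Y^{\perp}\}$. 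Each of $U=\Ima(T)$, $U'=T(\langle\mathbf{1}\rangle^{\perp})$ and $\ker(T)$ is a $\G$-submodule, hence a node of the lattice in the statement.

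First I would record the key computation. From the proof of Lemma \ref{subair1}(2), $A[W]={m-1\brack 1}_{q}\mathbf{1}+r[W]$ for every maximal totally isotropic $W$, and clearly $J[W]={m\brack 1}_{q}\mathbf{1}$. As $q$ is odd, $r=q^{m-1}-1$ is even, so $\ov{r}=0$ and
\[
T[W]=\left({m\brack 1}_{q}-{m-1\brack 1}_{q}\right)\mathbf{1}=q^{m-1}\,\mathbf{1}=\mathbf{1}\pmod 2 .
\]
Two consequences are immediate: $T([W]-[W'])=0$, so $C'\subseteq\ker(T)$; and $T[W]=\mathbf{1}\in\langle\mathbf{1}\rangle$, so $C\subseteq\{x:T(x)\in\langle\mathbf{1}\rangle\}=(U')^{\perp}$.

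Next I would pin down $\ker(T)$ in the lattice. We already have $\ker(T)\supseteq C'$, and I claim equality. Indeed $T[W]=\mathbf{1}\neq 0$ with $[W]\in C$ shows $C\not\subseteq\ker(T)$; and for an adjacent pair $\mathbf{u}\sim\mathbf{v}$ one computes $T(\mathbf{u}-\mathbf{v})=\ov{A}(\mathbf{v}-\mathbf{u})\neq 0$, since $\mathbf{u}\in N(\mathbf{v})\setminus N(\mathbf{u})$ makes the rows of $A$ at $\mathbf{u},\mathbf{v}$ distinct modulo $2$, so $U'\neq 0$ and thus $\langle\mathbf{1}\rangle^{\perp}\not\subseteq\ker(T)$. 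In either Hasse diagram the only submodules covering $C'$ lie in $\{C,\langle\mathbf{1}\rangle^{\perp}\}$, so a submodule properly containing $C'$ must contain $C$ or $\langle\mathbf{1}\rangle^{\perp}$; both are excluded, whence $\ker(T)=C'$. Self-adjointness then gives $U=\Ima(T)=\ker(T)^{\perp}=(C')^{\perp}$.

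Then I would prove $U'=C^{\perp}$, i.e. $(U')^{\perp}=C$. We have $C\subseteq(U')^{\perp}$ from the key computation. Conversely $T$ maps $(U')^{\perp}$ into the one-dimensional space $\langle\mathbf{1}\rangle$ by definition, so rank–nullity applied to $T|_{(U')^{\perp}}$, together with $\ker(T)\cap(U')^{\perp}=C'$ (using $\ker(T)=C'\subseteq(U')^{\perp}$), yields $\dim(U')^{\perp}\leq\dim C'+1=\dim C$; here $\dim C=\dim C'+1$ because $C=C'+\langle[W]\rangle$ with $C\neq C'$ (distinct nodes). Combined with $C\subseteq(U')^{\perp}$ this forces $(U')^{\perp}=C$, hence $U'=C^{\perp}$. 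Finally the dimension formulas follow: $\dim C=f+1$ and $\dim C'=f$ are read off from \cite{LST} after matching parameters (corroborated by $(A-rI)([W]-[W'])=0$, which gives $C'\subseteq\ov{V}_{r}$ and hence $\dim C'\leq f$), and then $\dim(C')^{\perp}=v-f=g+1$ and $\dim C^{\perp}=v-(f+1)=g$ by the perfect duality and $v=1+f+g$. The main obstacle I anticipate is twofold: translating the dimension data of \cite{LST}, expressed in terms of $q$ and $m$, into the eigenvalue multiplicities $f,g$ used here, and handling the branched lattice in the $m$ odd case, where $C'$ is covered by both $C$ and $\langle\mathbf{1}\rangle^{\perp}$, so that identifying $\ker(T)$ genuinely requires both non-containment facts.
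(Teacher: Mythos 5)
Your proof is correct, but it is doing something the paper does not: the paper's entire ``proof'' of Theorem \ref{lst} is the citation of Theorem 2.13 and Remark 2.15 of \cite{LST}, from which it imports the lattice, the identifications $U=(C')^{\perp}$ and $U'=C^{\perp}$, and all four dimensions without further argument. You instead import only the two Hasse diagrams and $\dim C=f+1$, $\dim C'=f$, and derive the identifications yourself, and the derivation is sound: $T=J-\ov{A}$ is self-adjoint for the form with $\Po$ orthonormal; the computation $T[W]=q^{m-1}\mathbf{1}\equiv\mathbf{1}\pmod 2$ (using that $q$ odd makes $\ov{r}=0$) gives $C'\subseteq\ker(T)$ and $C\subseteq(U')^{\perp}$; the two non-containments ($T[W]=\mathbf{1}\neq 0$, and $T(\mathbf{u}-\mathbf{v})=\ov{A}(\mathbf{u}-\mathbf{v})\neq 0$ for adjacent $\mathbf{u},\mathbf{v}$) locate $\ker(T)=C'$ among the four submodules containing $C'$ in either diagram (in the $m$ even case the lattice is a chain and the first non-containment already suffices); self-adjointness converts this to $U=\Ima(T)=\ker(T)^{\perp}=(C')^{\perp}$; and the rank--nullity bound $\dim\bigl((U')^{\perp}\bigr)\leq\dim C'+1=\dim C$ closes $(U')^{\perp}=C$. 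What your route buys is an explicit verification that the graph-theoretically defined $U$ and $U'$ of \S\ref{act} coincide with the submodules $(C')^{\perp}$ and $C^{\perp}$ in the lattice of \cite{LST} --- a translation between the two sets of definitions that the paper leaves entirely to the reference. The cost is that you still rest on \cite{LST} for the lattice itself and for $\dim C$, $\dim C'$; note in particular that your parenthetical corroboration $C'\subseteq\ov{V}_{r}$ only yields the upper bound $\dim C'\leq f$, not the equality, so that dependence is genuinely irreducible in your scheme.
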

\subsection{$2$-elementary divisors when $m$ is even}
\paragraph{Elementary divisors of $S$.}
We identify $\ov{M}_{i}(A)$ with $\ov{M}_{i}$ and $\ov{A}_{2}$ with $\ov{A}$.
Since $m$ is even, ${m-1 \brack 1}_{q}$ is odd. Thus have $v_{2}(q-1)=v_{2}(q^{m-1}-1)=v_{2}(r)$. We  set $v_{2}(q-1)=w$ and $v_{2}(s)=v_{2}(1+q^{m-1})=d$. So we have $v_{2}(k)=d=v_{2}(\mu)$. As $|S|=kr^{f}s^{g}$, we obtain $v_{2}(|S|)=d+wf+dg$.
Lemma \ref{subair1} implies that $U \subset \ov{M}_{d+w}$. By Theorem \ref{lst}, we conclude that $\dim(\ov{M}_{d+w})\geq \dim(U)=g+1$.
As $r$ is an eigenvalue of $2$-valuation $w$ and geometric multiplicity $f$, Lemma \ref{eigenval} implies that $\dim(\ov{M}_{w}) \geq f$.
So by Lemma \ref{main}, setting $j=2$, $s_{1}=f$, $s_{2}=g+1$, $s_{3}=\dim(\ov{\ker(A)})=0$, $t_{1}=w$, and $t_{2}=d+w$, we obtain
$e_{0}=g+1$, $e_{w}=f-g-1$, $e_{d+w}=g+1$, and $e_{i}=0$ for all other $i$.

\paragraph{Elementary divisors of $K$.} 
We identify $\ov{M}_{i}(L)$ with $\ov{M}_{i}$ and $\ov{L}_{2}$ with $\ov{L}$.
In this case ${m \brack 1}_{q}$ is even and thus $v_{2}(q^{m}-1)>1$. This implies that $v_{2}(q^{m}+1)=1$. Set $v_{2}({m \brack 1}_{q})=b$ and $v_{2}(s)=v_{2}(q^{m-1}+1)=d$. Then $v_{2}(t)=1$, $v_{2}(u)=b+d$, 
$v_{2}(k)=v_{2}(\mu)=b$, and $v_{2}(|K|)=f+(b+d)g-(b+1)$ (as $|K|=t^{f}u^{g}/v$).

Lemma \ref{subair1} gives us $U' \subset \ov{M}_{b+d+1}$ and $U \subset \ov{M}_{d+1}$. Now by Theorem \ref{lst}, we can see that 
$\dim(\ov{M}_{b+d+1})$  $\geq g$ and $\dim(\ov{M_{d+1}})\geq g+1$. As $t$ is an eigenvalue of valuation $1$ and geometric multiplicity $f$, Lemma \ref{eigenval} gives us $\dim(\ov{M}_{1}) \geq f$.

So by Lemma \ref{main}, setting $j=3$, $s_{1}=f$, $s_{2}=g+1$, $s_{3}=g$, $s_{4}=\dim(\ov{\ker(L)})=1$, and $t_{1}=1$, $t_{2}=d+1$, and $t_{3}=d+b+1$, we conclude that $e_{0}=g+1$, $e_{1}=f-g-1$, $e_{d+1}=1$, $e_{b+d+1}=g-1$, and $e_{i}=0$ for all other $i$.

\subsection{$2$-elementary divisors when $m$ is odd}
\paragraph{Elementary divisors of $S$.}
We identify $\ov{M}_{i}(A)$ with $\ov{M}_{i}$ and $\ov{A}_{2}$ with $\ov{A}$.
In this case $m$ is odd and thus ${m-1 \brack 1}_{q}$ is even, and therefore $v_{2}(q^{m-1}-1)>1$ and $v_{2}(q^{m-1}+1)=1$. We set $v_{2}({m-1 \brack 1}_{q})=a$ and $v_{2}(q-1)=w$. Now $v_{2}(r)=a+w$, $v_{2}(s)=v_{2}(q^{m-1}+1)=1$, and $v_{2}(k)=v_{2}(\mu)=a+1$. As $|S|=kr^{f}s^{g}$, we have $v_{2}(|S|)=(a+w)f+g+a+1$.
By Lemma \ref{subair1}, we have $C \subset \ov{M}_{a}$ and $U \subset \ov{M}_{a+w+1}$. Theorem \ref{lst} implies $\dim(\ov{M}_{a}) \geq f+1$ and $\dim(\ov{M}_{a+w+1})\geq g+1$.
 As $r$ is an eigenvalue of valuation $a+w$ and geometric multiplicity $f$, Lemma \ref{eigenval} implies that $\dim(\ov{M}_{a+w}) \geq f$.

So by Lemma \ref{main}, setting $j=3$, $s_{1}=f+1$, $s_{2}=f$, $s_{3}=g+1$, $s_{4}=\dim(\ov{\ker(A)})=0$, $t_{1}=a$, $t_{2}=a+w$, and $t_{3}=a+w+1$, we have
$e_{0}=g$, $e_{a}=1$, $e_{a+w}=f-g-1$, $e_{a+w+1}=g+1$, and $e_{i}=0$ for all other $i$. 

\paragraph{Elementary divisors of $K$.}
We identify $\ov{M}_{i}(L)$ with $\ov{M}_{i}$ and $\ov{L}_{2}$ with $\ov{L}$.
As ${m-1 \brack 1}_{q}$ is even we have $v_{2}(-1+q^{m-1})>1$, and thus $v_{2}(s)=v_{2}(1+q^{m-1})=1$. Set $v_{2}(q^{m}+1)=c$ and $v_{2}({m-1 \brack 1}_{q})=a$. We then have $v_{2}(t)=a+c$, $v_{2}(u)=1$, $v_{2}(v)=c$, $v_{2}(k)=v_{2}(\mu)=a+1$, and $v_{2}(|K|)=(a+c)f+g-c$.

By Lemma \ref{subair1}, we have $C \subset \ov{M}_{a}$ and $U \subset \ov{M}_{a+c+1}$.   Now application of Theorem \ref{lst} gives us $\dim(\ov{M}_{a})\geq f+1$ and $\dim(\ov{M}_{a+c+1})\geq g+1$.
 As $t$ is an eigenvalue of valuation $a+c$ and geometric multiplicity $f$, Lemma \ref{eigenval} implies that $\dim(\ov{M}_{a+c}) \geq f$.
 
So by Lemma \ref{main}, setting $j=3$, $s_{1}=f+1$, $s_{2}=f$, $s_{3}=g+1$, $s_{4}=\dim(\ov{\ker(L)})=1$, $t_{1}=a$, $t_{2}=a+c$, and $t_{3}=a+c+1$, we may conclude that
$e_{0}=g$, $e_{a}=1$, $e_{a+c}=f-g-1$, $e_{a+c+1}=g$, and $e_{i}=0$ for all other $i$. 
 
\section{$2$-elementary divisors of $S$ and $K$ when $\Gamma(V)=\Gamma_{o}(q,m)$.}\label{o}
Given the graph $\Gamma_{o}(q,m)$ and a prime $\lp$, table \ref{nilconst} shows that $\ov{A}_{\lp}$ (equivalently $\ov{L}_{\lp}$) is nilpotent if and only if $\lp=2$ and $q$ is odd. 
In this section we compute the $2$-elementary divisors of $S$ and $K$ when $\Gamma(V)=\Gamma_{o}(q,m)$. We set $h=1$, and $z=m$ in Lemma \ref{rel} and Lemma \ref{par} to get parameters for this graph.
The graph $\Gamma_{o}(q,m)$ is an SRG with parameters $v={2m \brack 1}_{q}$, $k= q{m-1 \brack 1}_{q}(1+q^{m-1})$, $\lambda= {2m-2 \brack 1}_{q}-2$, and $\mu={2m-2 \brack 1}_{q}$.

The Adjacency matrix $A$ has eigenvalues $(k,r,s)=(k,q^{m-1}-1,-(1+q^{m-1}))$
with multiplicities $(1,f,g)=\left(1,\ \frac{q(q^{m}-1)(q^{m-1}+1)}{2(q-1)},\ \frac{q(q^{m}+1)(q^{m-1}-1)}{2(q-1)}\right)$. So the Laplacian $L$ has eigenvalues $(0,t,u)=(0,k-r,k-s)=\left(0,{m-1 \brack 1}_{q}(1+q^{m}), {m \brack 1}_{q}(1+q^{m-1})\right)$
with multiplicities $(1,f,g)$.

From now on in this section, we denote $\Gamma_{o}(q,m)$ by $\Gamma_{o}$.
\subsection{Submodule structure}
We now recall from \S\ref{act} the definitions of $C$, $C'$ $U$, $U'$, $\ov{V}_{r}$ and $\ov{V}_{s}$ in the context of the graph $\Gamma_{o}$. In this case $\G=\mathrm{O}(2m+1,q)$.
From Theorem $1.1$, Corollary $7.5$, and Lemma $7.6$ of \cite{ST}, we have the following result.
\begin{theorem}\label{sto} The module $U'^{\perp}$ has a submodule $M$ containing $\ov{V}_{s}$ such that $\dim(M/\ov{V}_{s})=1$. The relative positions of $M$, $\ov{V}_{s}$, $C$, $\ov{V}_{r}$, $U^{\perp}$, $U'$, and $U$ in the $\bb{F}_{2}\mathrm{O}(2m+1,q)$ submodule lattice of $(U')^{\perp}$ are given by the following diagrams.

\small{\begin{minipage}[t][][c]{0.45 \textwidth}
$m$ is odd.\\
\xymatrix{&U'^{\perp}\ar@{-}[dr] \ar@{-}[dl]&  \\
C \ar@{-}[d]& & U^{\perp} \ar@{-}[dll] \ar@{-}[d]   \\
\ov{V}_{r} \ar@{-}[d] & & M \ar@{-}[dll] \ar@{-}[d] \\
U \ar@{-}[drr] \ar@{-}[d]  & & \ov{V}_{s} \ar@{-}[d]&  \\
\left\langle \mathbf{1} \right\rangle & & U' 
}
\end{minipage}
\begin{minipage}[t][][c]{0.45 \textwidth}
$m$ is even.\\
\xymatrix{&U'^{\perp}\ar@{-}[dr] \ar@{-}[dl]&  \\
C \ar@{-}[d]& & U^{\perp} \ar@{-}[dll] \ar@{-}[d]   \\
\ov{V}_{r} \ar@{-}[d] & & M \ar@{-}[dll] \ar@{-}[d] \\
U \ar@{-}[dr]   & & \ov{V}_{s} \ar@{-}[dl]&  \\
 &U' \ar@{-}[d] &  \\
 & \left\langle \mathbf{1} \right\rangle&
}
\end{minipage}}

\normalsize
Here $y:=\dim(\ov{V}_{s}/U')=\dfrac{(q^{m}-1)(q^{m})-q}{2(q+1)}$, $d:=\dim(\ov{V}_{r}/U)= \dfrac{(q^{m}+1)(q^{m}+q)}{2(q+1)}-1$, and $x:=\dim(U')=\dfrac{q^{2m}-q^{2}}{q^{2}-1}$,  $\dim(C/\ov{V}_{r})=1$, $\dim(M/ \ov{V}_{s})=1$, and $\dim(U/U')=1$.
\end{theorem}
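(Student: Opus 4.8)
The plan is to deduce Theorem \ref{sto} from the three cited results of \cite{ST}, which together describe the full $\bb{F}_{2}\mathrm{O}(2m+1,q)$-submodule lattice of the rank-$3$ permutation module $\bb{F}_{2}\Po$, and then to locate each of the named modules $M$, $\ov{V}_{s}$, $C$, $\ov{V}_{r}$, $U^{\perp}$, $U'$, $U$ inside that abstract lattice. First I would record the complete lattice from Theorem $1.1$ of \cite{ST}. Because the module has only a few composition factors and the relevant interval $(U')^{\perp}$ is nearly multiplicity-free, an inclusion between two of our submodules is forced once one knows their dimensions together with the containment of composition series; so the bulk of the work is an identification-plus-dimension count rather than a fresh lattice computation.

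Next I would identify the geometrically or linear-algebraically defined modules with nodes of the lattice. The modules $C$ and $C'$ are the spans of the vectors $[W]$ and of the differences $[W]-[W']$ over maximal totally isotropic $W$; the modules $U$ and $U'$ are $\Ima(J-\ov{A})$ and its ``difference'' analogue; and $\ov{V}_{r}$, $\ov{V}_{s}$ are the mod-$2$ reductions of the pure $\bb{Z}_{\lp}$-eigenlattices, so their dimensions are the eigenvalue multiplicities $f$ and $g$. The key computations linking these are already available: the relation $A([W])={m-1\brack 1}_{q}\mathbf{1}+r[W]$ shows $C$ is $A$-invariant and, upon reduction mod $2$, that $C$ collapses onto $\ov{V}_{r}$ with a one-dimensional overshoot from the $\mathbf{1}$-part, giving $\dim(C/\ov{V}_{r})=1$. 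Reducing $(A-rI)(A-sI)=\mu J$ modulo $2$, together with $AJ=kJ$, pins $U=\Ima(J-\ov{A})$ between $U'$ and $\ov{V}_{r}$, and a single-coset argument shows the quotients $U/U'$ and $M/\ov{V}_{s}$ are trivial one-dimensional. Corollary $7.5$ and Lemma $7.6$ of \cite{ST} are precisely what supply the remaining matchings, in particular the existence and position of the intermediate module $M$ with $\ov{V}_{s}\subsetneq M\subseteq (U')^{\perp}$.

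For the dimension formulas I would use two inputs. The eigenspace dimensions give $\dim\ov{V}_{r}=f$ and $\dim\ov{V}_{s}=g$; and the symmetric form on $\bb{F}_{2}\Po$ with $\Po$ orthonormal makes $\ov{A}$ self-adjoint, so that $\Ima(\ov{A}-\bar\alpha I)^{\perp}=\ker(\ov{A}-\bar\alpha I)$. This duality converts the ranks of $J-\ov{A}$ (and its restriction to differences) into the complementary kernel dimensions, yielding $\dim(U')=x$ and $\dim U=x+1$. Chaining the one-dimensional overshoots then forces $d=\dim(\ov{V}_{r}/U)=f-x-1$ and $y=\dim(\ov{V}_{s}/U')=g-x$, which I would check agree with the closed forms stated in the theorem by a routine simplification of the expressions for $f$, $g$, and $x$.

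The main obstacle I anticipate is the parity split between $m$ odd and $m$ even, which is exactly the point where the two Hasse diagrams differ: whether the all-one vector $\langle\mathbf{1}\rangle$ sits below $U'$ as part of a minimal layer or instead appears as the bottom node beneath $U'$. Tracking where $\mathbf{1}$ lands requires determining whether $\mathbf{1}\in U'$ and how $\mathbf{1}$ meets the socle and radical, and this is sensitive to the $2$-adic valuations of ${m\brack 1}_{q}$ and ${m-1\brack 1}_{q}$, i.e.\ to the parity of $m$. I expect that once $M$ is correctly placed via Lemma $7.6$ of \cite{ST}, the only genuinely delicate step is verifying the covering relations incident to $\langle\mathbf{1}\rangle$ and $U'$ in each parity, everything else following from the dimension count and the self-adjointness duality above.
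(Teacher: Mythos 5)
The paper gives no independent proof of Theorem \ref{sto}: it is quoted directly from Theorem 1.1, Corollary 7.5 and Lemma 7.6 of \cite{ST} (with Theorem 3.1 of \cite{LST} covering $m=2$), and your plan --- read off the lattice from those results, identify $C$, $U$, $U'$, $\ov{V}_{r}$, $\ov{V}_{s}$ by the same geometric and linear-algebraic descriptions, and chain the one-dimensional quotients to recover the dimension formulas (indeed $d=f-x-1$ and $y=g-x$ check out against the stated closed forms) --- is essentially the same route. One small caution: self-adjointness of $\ov{A}$ only yields $\dim U+\dim U^{\perp}=v$, so the value $\dim(U')=x$ is not a consequence of that duality alone but is precisely the $2$-rank information supplied by the cited results of \cite{ST}, which you do invoke for the remaining identifications.
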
  
\begin{remark}\cite{ST} proves the above for $m \geq 3$. For $m=2$, we refer to Theorem $3.1$ of \cite{LST}.  We would like to address a typographical error present in Section $3$ of \cite{LST}. The definition of the submodule $\mathcal C$ of $k^{\mathcal{L}_2}$  should be changed from $\langle M \ | M \in \mathcal{L}_{2} \rangle _{k}$ to  $\langle \eta_{1,2}(M) \ | M \in \mathcal{L}_{1} \rangle_{k}$, and the related definition of $\mathcal C^+$ should be similarly corrected.
\end{remark}

\subsection{$2$-elementary divisors when $m$ is even}
\paragraph{Elementary divisors of $S$.}
We identify $\ov{M}_{i}(A)$ with $\ov{M}_{i}$ and $\ov{A}_{2}$ with $\ov{A}$.
Since ${m-1 \brack 1}_{q}$ is odd, we have $v_{2}(q-1)=v_{2}(q^{m-1}-1)$. Setting $v_{2}(q-1)=w$, $v_{2}(1+q^{m-1})=d$, we have $v_{2}(k)=d=v_{2}(\mu)$, $v_{2}(r)=w,$ $v_{2}(s)=d$, and $v_{2}(|S|)=d+dg+wf$. 
\subparagraph*{Case 1: Assume that $w=1$.}
In this case, as $v_{2}(q^{m-1}-1)=w=1$, we have $d=v_{2}(q^{m-1}+1)=v_{2}(q^{m-1}-1+2)>1$.
As $r,s$ are integer eigenvalues with non-zero $2$-valuations, Lemma \ref{subair1} gives us $\ov{V_{r}} \subset \ov{M_{1}}$ and $\ov{V_{s}}\subset \ov{M}_{1}$. Now by Theorem \ref{sto}, we have $\ov{V}_{r}+\ov{V}_{s}=U^{\perp}\subset \ov{M}_{1}$ and hence 
$\dim(\ov{M_{1}})\geq \dim(U^{\perp})=f+g+1-\dim(U)=f+g-x$.

By Lemma \ref{subair1}, $U \subset \ov{{M}_{d+1}}$. By Theorem \ref{sto} we have $\dim{U}=x+1$, and thus 

$\dim(\ov{M_{d+1}})\geq x+1$.

Now $s$ is an integer eigenvalue of geometric multiplicity $g$ and $2$-valuation $d$. Lemma \ref{subair1} gives us $\ov{V_{s}}\subset \ov{M_{d}}$. Also $U \subset \ov{M_{d+1}} \subset \ov{M_{d}}$. So by Theorem \ref{sto}, we have $\ov{V}_{s}+U=M \subset \ov{M}_{d}$ and hence $\dim(\ov{M_{d}})\geq \dim(M)=g+1$.
We have,
\begin{align*}
1(f+g-x-g-1)+d(g+1-(x+1))+(d+1)(x+1)=\\
f+d(g+1)=v_{2}(|S|).
\end{align*}
So by Lemma \ref{main}, setting $j=3$, $s_{1}=f+g-x$, $s_{2}=g+1$, $s_{3}=x+1$, $s_{4}=\dim(\ov{\ker(A)})=0$, $t_{1}=1$, $t_{2}=d$, and $t_{3}=d+1$, we may conclude that 
$e_{0}=x+1$, $e_{1}=f-x-1$, $e_{d}=g-x$, $e_{d+1}=x+1$, and $e_{i}=0$ for all other $i>0$.
\subparagraph*{Case 2: Assume that $w>1$.}
In this case, $d=1$.
As $r,s$ are integer eigenvalues with non-zero $2$-valuations, we have by Lemma \ref{subair1}, $\ov{V_{r}} \subset \ov{M_{1}}$ and $\ov{V_{s}}\subset \ov{M}_{1}$. Thus by Theorem \ref{sto}, $U^{\perp}\subset \ov{M}_{1}$. Hence $\dim(\ov{M_{1}})\geq f+g+1-x-1$. 

By Lemma \ref{subair1}, we have $U \subset \ov{{M}_{w+1}}$. Thus $\dim(\ov{M_{w+1}})\geq x+1$ and. We have
\begin{align*}
1(f+g-x-f)+w(f-(x+1))+(w+1)(x+1)=\\
wf+g+1=v_{2}(|S|).
\end{align*}
So by Lemma \ref{main}, setting $j=3$, $s_{1}=f+g-x$, $s_{2}=f$, $s_{3}=x+1$, $s_{4}=\dim(\ov{\ker(A)})=0$, $t_{1}=1$, $t_{2}=w$, and $t_{3}=w+1$, we may conclude that 
$e_{0}=x+1$, $e_{1}=g-x$, $e_{w}=f-x-1$, $e_{w+1}=x+1$, and $e_{i}=0$ for all other $i>0$.

\paragraph{Elementary divisors of $K$.}
We identify $\ov{M}_{i}(L)$ with $\ov{M}_{i}$ and $\ov{L}_{2}$ with $\ov{L}$.
In this case, $m$ is even. Since ${m \brack 1}_{q}$ is even, we have $v_{2}(q^{m}-1)>1$ and $v_{2}(q^{m}+1)=1$. We set $v_{2}({m \brack 1}_{q})=b$ and $v_{2}(s)=v_{2}(q^{m-1}+1)=d$. So we have $v_{2}(t)=1$, $v_{2}(u)=d+b$, 
$v_{2}(k)=v_{2}(\mu)=d$, and $v_{2}(|K|)=f+(b+d)g-(b+1)$.

As $A \equiv L \pmod 2$, from Smith group computation above we conclude that $\dim(\ov{M_{1}})\geq f+g-x$.
As $u$ is an eigenvalue of valuation $d+b$, Lemma \ref{subair1} implies $\ov{V_{s}} \subset \ov{M_{d+b}}$, and thus $\dim(\ov{M}_{d+b})\geq g$. 
By Lemma \ref{subair1}, we have $U' \subset \ov{M}_{d+b+1}$ and $U \subset \ov{M}_{d+1}$. Therefore $\dim(\ov{M}_{d+b+1})\geq x$, by Theorem \ref{sto}.

Since $\ov{V_{s}} \subset \ov{M}_{d+b} \subset \ov{M}_{d+1}$ and $U \subset \ov{M}_{d+1}$, by Theorem \ref{sto} we have $\ov{M}_{d+1} \subset M$. We may now conclude that $\dim(\ov{M}_{d+1})\geq g+1$.

Now,
\begin{align*}
1(f+g-x-g-1)+(d+1)(g+1-g)+(d+b)(g-x)+(d+b+1)(x-1)=\\
f+(d+b)g-(b+1)=v_{2}(|K|).
\end{align*}
We apply Lemma \ref{main} to conclude the following.
\begin{itemize}
\item If $b>1$, set $j=4$, $s_{1}=f+g-x$, $s_{2}=g+1$, $s_{3}=g$, $s_{4}=x$ $s_{5}=\dim(\ov{\ker(L)})=1$, $t_{1}=1$, $t_{2}=d+1$, $t_{3}=d+b$, $t_{4}=d+b+1$, then by Lemma \ref{main}, we have 
$e_{0}=x+1$, $e_{1}=f-x-1$, $e_{d+1}=1$, $e_{d+b}=g-x$, $e_{d+b+1}=x-1$, and $e_{i}=0$ for all other $i$.
\item If $b=1$, set $j=3$, $s_{1}=f+g-x$, $s_{2}=g+1$, $s_{3}=x$, $s_{4}=\dim(\ov{\ker(L)})=1$, $t_{1}=1$, $t_{2}=d+1$, $t_{3}=d+b+1$, then by Lemma \ref{main}, we have
$e_{0}=x+1$, $e_{1}=f-x-1$,  $e_{1+d}=g+1-x$, $e_{d+b+1}=x-1$, and $e_{i}=0$ for all other $i$.
\end{itemize}

\subsection{$2$-elementary divisors of $S$ and $K$, when $m$ is odd.}
\paragraph{Elementary divisors of $S$.} 
We identify $\ov{M}_{i}(A)$ with $\ov{M}_{i}$ and $\ov{A}_{2}$ with $\ov{A}$.
In this case $m$ is odd.
As $m$ is odd, ${m-1 \brack 1}_{q}$ is even and thus $v_{2}({m-1 \brack 1}_{q})>0$. Set $v_{2}({m-1 \brack 1}_{q})=a$ and $v_{2}(q-1)=w$. So we have $v_{2}(r)=a+w$, $v_{2}(s)=1$, $v_{2}(k)=v_{2}(\mu)=a+1$, and $v_{2}(|S|)=(a+w)f+g+(a+1)$.
By Lemma \ref{subair1}, $\dim(\overline{{M}_{a}}) \geq f+1$ (since $\dim(C)=f+1$ by Theorem \ref{sto}).
As $s$ is an integer eigenvalue of multiplicity $g$ with $2$-valuation $1$, Lemma \ref{subair1} implies $\ov{V_{s}} \subset \overline{M_{1}} $. Since $C \subset\ov{M_{a}} \subset \ov{M_{1}}$ aswell , Theorem \ref{sto} implies $\ov{M_{1}} \supset \ov{V}_{s}+C= (U')^{\perp} $. So $\dim(\ov{M_{1}}) \geq \dim(U'^{\perp})=f+g+1-\dim(U')=f+g+1-x$.
As $r$ is an integer eigenvalue of multiplicity $f$ with $2$-valuation $a+w$, Lemma \ref{subair1} implies $\dim(\overline{M_{a+w}})\geq f$.
By Lemma \ref{subair1}, we also have $U \subset \ov{{M}_{a+w+1}}$. From Theorem \ref{sto}, we conclude $\dim(\ov{M_{a+w+1}})\geq x+1$.

We have
\begin{align*}
1(f+g+1-x-(f+1))+a(f+1-f)+(a+w)(f-(x+1))+(a+b+1)(x+1) \\
=g+a+((a+w)f)+1=v_{2}(|S|).
\end{align*}
We may conclude the following from Lemma \ref{main}.
\begin{itemize}
\item If $a>1$, setting $j=4$, $s_{1}=f+g+1-x$, $s_{2}=f+1$, $s_{3}=f$, $s_{4}=x+1$, $s_{5}=\dim(\ov{\ker(A)})=0$, $t_{1}=1$, $t_{2}=a$, $t_{3}=a+w$, and $t_{4}=a+w+1$, by Lemma \ref{main} we get $e_{0}=x$, $e_{1}=g-x$, $e_{a}=1$, $e_{a+w}= f-x-1$,  $e_{a+w+1}=x+1$ and $e_{i}=0$ for all other $i>0$.
\item If $a=1$, setting $j=3$, $s_{1}=f+g+1-x$, $s_{2}=f$, $s_{3}=x+1$, $s_{4}=\dim(\ov{\ker(A)})=0$, $t_{1}=1$, $t_{2}=a+w$, and $t_{3}=a+w+1$, by Lemma \ref{main}, $e_{0}=x$, $e_{1}=g+1-x$, $e_{a+w}= f-x-1$, $e_{a+w+1}=x+1$, and $e_{i}=0$ for all other $i>0$.
\end{itemize}

\paragraph{Elementary divisors of $K$.} 
We identify $\ov{M}_{i}(L)$ with $\ov{M}_{i}$ and $\ov{L}_{2}$ with $\ov{L}$.
In this case, $m$ is odd.
 As ${m-1 \brack 1}_{q}$ is even, we have $v_{2}(-1+q^{m-1})>1$ and thus $v_{2}(s)=v_{2}(1+q^{m-1})=1$. Set $v_{2}(q^{m}+1)=c$ and $v_{2}({m-1 \brack 1}_{q})=a$. We have $v_{2}(t)=a+c$, $v_{2}(u)=1$, $v_{2}(v)=c$, $v_{2}(k)=v_{2}(\mu)=a+1$, and $v_{2}(|K|)=(a+c)f+g-c$.
 
By Lemma \ref{subair1}, $C \subset \ov{M}_{a}$ and $U \subset \ov{M}_{a+c+1}$. Thus we have $\dim(\ov{M}_{a}) \geq f+1$ and $\dim(\ov{M}_{a+c+1})\geq x+1$, by Theorem \ref{sto}.
As $u$ is an integer eigenvalue of geometric multiplicity $g$ with $2$-valuation $1$, Lemma \ref{eigenval} implies $\overline{M_{1}}\supset \ov{V_{s}}$. Since $C \subset\ov{M_{a}} \subset \ov{M_{1}}$, Theorem \ref{sto} implies $\ov{M_{1}}\supset (U')^{\perp}$. Thus $\dim(\ov{M_{1}}) \geq f+g+1-x$.
As $t$ is an integer eigenvalue of multiplicity $g$ with $2$-valuation $a+b$, Lemma \ref{subair1} implies $\overline{M_{a+c}}\supset \ov{V_{r}}$. So $\dim(\ov{M_{a+c}}) \geq f$.
 
We have
\begin{align*}
1(f+g+1-x-(f+1))+a(f+1-f)
+(a+c)(f-(x+1))\\+(a+c+1)(x+1-1) \\
=g+(a+c)f-b=v_{2}(|K|).
\end{align*}
Using Lemma \ref{main}, we conclude the following.
\begin{itemize}
\item If $a>1$, setting $j=4$, $s_{1}=f+g+1-x$, $s_{2}=f+1$, $s_{3}=f$, $s_{4}=x+1$, $s_{5}=\dim(\ov{\ker(L)})=1$, $t_{1}=1$, $t_{2}=1$, $t_{3}=a+c$, and $t_{4}=a+c+1$, by Lemma \ref{main} we have $e_{0}=x$, $e_{1}=g-x$, $e_{a}=1$, $e_{a+c}=f-x-1$, $e_{a+c+1}=x$, and $e_{i}=0$ for all other $i$.
\item If $a=1$, by similar arguments we can deduce that $e_{0}=x$, $e_{1}=g-x+1$, $e_{a+c}=f-x-1$, $e_{a+c+1}=x$, and $e_{i}=0$ for all other $i$.  
\end{itemize}
\section{$\lp$-elementary divisors of $S$ and $K$ when $\Gamma(V)=\Gamma_{o-}(q,m)$, and $\lp \mid q+1$.}\label{om}
Given the graph $\Gamma_{o-}(q,m)$ and a prime $\lp$, table \ref{nilconst} shows that $\ov{A}_{\lp}$ is nilpotent if and only if either i) $\lp=2$ and $q$ is odd; or ii) $\lp$ is odd with $\lp \mid q+1$ and $m$ is even. We also have $\ov{L}_{\lp}$ is nilpotent if and only if $\lp \mid q+1$.

In this section we compute the $\lp$-elementary divisors of $S$ and $K$ when $\Gamma(V)= \Gamma_{o-}(q,m)$ and $(\lp, q,m)$ satisfy the arithmetic conditions given above.  

From now on in this section, we denote $\Gamma_{o-}(q,m)$ by $\Gamma_{o-}$, and $\lp$ is a prime that meets the description in the previous paragraph.
We set $h=2$ and $z=m-1$ in Lemma \ref{rel} and Lemma \ref{par} to get the parameters for this graph. Thus we get that $\Gamma_{o-}(V)$ is an SRG with parameters 
$v={m-1 \brack 1}_{q}(q^{m}+1)$, $k=q{m-2 \brack 1}_{q}(q^{m-1}+1)$,
$\lambda=q{m-2 \brack 1}_{q}(q^{m-1}+1)-1-q^{2m-3}$, and
$\mu={m-2 \brack 1}_{q}(q^{m-1}+1)$.

The eigenvalues of the adjacency matrix $A$ are $(k,r,s)=(k,q^{m-2}-1,-(1+q^{m-1})),$
with multiplicities $(1,f,g)=\left(1,\ \frac{q^{2}(q^{m-1}-1)(q^{m-1}+1)}{(q^{2}-1)},\ \frac{q(q^{m}+1)(q^{m-2}-1)}{(q^{2}-1)}\right)$.

So the Laplacian $L$ has eigenvalues $(0,t,u)=\left(0,{m-2 \brack 1}_{q}(1+q^{m}), {m-1 \brack 1}_{q}(1+q^{m-1})\right)$
with multiplicities $(1,f,g)$.

\subsection{Submodule structure}
We now recall from \S\ref{act} the definitions of $C$, $C'$ $U$, $U'$, $\ov{V}_{r}$ and $\ov{V}_{s}$ in the context of the graph $\Gamma_{o-}$. In this case $\G=\mathrm{O^-}(2m,q)$.
By Corollary $8.5$, Lemma $8.7$, Lemma $8.8$, Lemma $8.9$, Corollary $8.10$, and Corollary $8.11$ of \cite{ST}, we have the following result.
\begin{theorem}\label{sto-}
Given a prime $\lp$ with $\lp\mid q+1$.
The relative positions of $C$, $\ov{V}_{r}$, $\ov{V}_{s}$, $U'$, $U$, and $\left\langle \mathbf{1} \right\rangle$ in the $\bb{F}_{\lp}\mathrm{O^-}(2m,q)$ submodule structure of $C$ are in the following diagrams. We have $\dim(C)=f+1$.

\small{\begin{minipage}[t][][c]{0.40\textwidth}
$\lp=2$ and $m$ is even

\xymatrix{
&C \ar@{-}[d]&\\
& \ov{V}_{r} \ar@{-}[d] &\\
& U \ar@{-}[dr] \ar@{-}[dl] &\\
\left\langle \mathbf{1} \right\rangle &  &U'=\ov{V}_{s}}
\end{minipage}
\begin{minipage}[t][][c]{0.40\textwidth}
$\lp=2$ and $m$ is odd

\xymatrix{
&C \ar@{-}[d]&\\
& \ov{V}_{r} \ar@{-}[d] &\\
& U \ar@{-}[d]  &\\
 &  U'=\ov{V}_{s} \ar@{-}[d] & \\
 & \left\langle \mathbf{1} \right\rangle & }
\end{minipage}

\begin{minipage}[t][][c]{0.40\textwidth}
$\lp\neq 2$, $\lp \mid q+1$, and $m$ is even. 

\xymatrix{
&C \ar@{-}[d] \ar@{-}[dr]&\\
& U \ar@{-}[d] \ar@{-}[dr] & \ov{V}_{r} \ar@{-}[d]\\
& \left\langle \mathbf{1} \right\rangle  & U'=\ov{V}_{s} 
 }
\end{minipage}
\begin{minipage}[t][][c]{0.40\textwidth}
$\lp\neq 2$, $\lp \mid q+1$, and $m$ is odd. 

\xymatrix{
& C \ar@{-}[dl] \ar@{-}[dr]&\\
 U  \ar@{-}[dr] &  & \ov{V}_{r} \ar@{-}[dl]\\
   & U'=\ov{V}_{s} \ar@{-}[d] &\\
& \left\langle \mathbf{1} \right\rangle &
 }
\end{minipage}}
\end{theorem}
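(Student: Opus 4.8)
The statement is a bookkeeping consequence of the submodule lattice of the rank-$3$ permutation module $\bb{F}_{\lp}\Po$ for $\mathrm{O}^{-}(2m,q)$, which is determined in cross characteristic in \cite{ST}. The plan is to locate each of the six geometrically defined modules $C$, $\ov{V}_{r}$, $\ov{V}_{s}$, $U$, $U'$, $\langle\mathbf{1}\rangle$ as a specific node of that lattice and then read off the interval lying below $C$. Because $\lp\mid q+1$ forces $q\equiv -1\pmod{\lp}$, the reductions $r=q^{m-2}-1\equiv(-1)^{m}-1$ and $s=-(q^{m-1}+1)\equiv -((-1)^{m-1}+1)\pmod{\lp}$ are governed by the parity of $m$: for $m$ even both vanish (and then so does $k$, so $\ov{A}_{\lp}$ is nilpotent, consistent with Table \ref{nilconst}), whereas for $m$ odd and $\lp$ odd neither vanishes and one works instead with the nilpotent operator $\ov{L}_{\lp}$. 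This dichotomy, together with the separate collapse of valuations at $\lp=2$, is exactly the fourfold split in the statement, and each of the four diagrams will be the corresponding case of the \cite{ST} lattice restricted to the submodules of $C$.

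First I would fix the dimensions. By the purity of $V_{r}\cap\bb{Z}_{\lp}\Po$ and $V_{s}\cap\bb{Z}_{\lp}\Po$ noted in \S\ref{act}, we have $\dim\ov{V}_{r}=f$ and $\dim\ov{V}_{s}=g$, while $\dim C=f+1$ is the $\lp$-rank of the incidence matrix between points and maximal totally isotropic subspaces, which is Corollary $8.5$ of \cite{ST}. The relation $A[W]={m-2 \brack 1}_{q}\mathbf{1}+r[W]$ established in the proof of Lemma \ref{subair1} gives $(A-rI)[W]={m-2 \brack 1}_{q}\mathbf{1}$, so $(A-rI)$ maps $C$ into $\langle\mathbf{1}\rangle$; over $\bb{Q}_{\lp}$ this yields $C\subseteq V_{r}\oplus\langle\mathbf{1}\rangle$, and equality by the dimension count. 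This identifies the composition factors of $C$ and, together with \cite{ST}, places $C$ at the top of the interval with $\ov{V}_{r}$ a maximal submodule and $\langle\mathbf{1}\rangle$ at the bottom.

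The crux is the identification $U'=\ov{V}_{s}$ and the exact location of $U$ (with $U$, $U'$ replaced by the images of $J-\ov{L}_{\lp}$ in the non-nilpotent case $\lp$ odd, $m$ odd, where the eigenspaces $V_{r}=\ker(L-tI)$ and $V_{s}=\ker(L-uI)$ are unchanged). These are the content of Lemmas $8.7$, $8.8$, $8.9$ of \cite{ST}, which match $U$ and $U'$ to the image and the ``difference'' submodule of the relevant incidence map; the eigenvalue relation $(A-rI)(A-sI)=\mu J$ of Lemma \ref{rel}, reduced modulo $\lp$ and combined with the valuation data above, then forces $U'=\ov{V}_{s}$ together with the covering relations among $U$, $\ov{V}_{s}=U'$, $\ov{V}_{r}$ and $\langle\mathbf{1}\rangle$. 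Corollaries $8.10$ and $8.11$ of \cite{ST} supply the remaining dimensions; for instance when $\lp$ is odd and $m$ is even one reads off $U+\ov{V}_{r}=C$ and $U\cap\ov{V}_{r}=\ov{V}_{s}$, whence $\dim U=g+1$ and $U=\langle\mathbf{1}\rangle\oplus\ov{V}_{s}$, giving precisely the diamond of that case. The self-duality of the lattice under the form $(\,,\,)$ of \S\ref{act} can be used to cut the casework roughly in half.

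The main obstacle is exactly this matching step. The composition multiplicities (equivalently the Brauer characters) do not by themselves decide whether two of these submodules are comparable or instead form a diamond, so one must genuinely invoke the fine submodule lattice of \cite{ST}, and one must verify that the abstractly described modules there coincide with the concrete $C$, $U$, $U'$ defined here by incidence and by $J-\ov{A}_{\lp}$ (resp.\ $J-\ov{L}_{\lp}$). The prime $\lp=2$, where several of the valuations $v_{\lp}(r),v_{\lp}(s),v_{\lp}(t),v_{\lp}(u)$ coincide, has to be handled on its own, and it is this careful identification together with the checking of covering relations in each of the four cases that forms the real work of the proof.
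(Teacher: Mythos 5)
Your proposal matches the paper's treatment of this statement: the paper gives no independent argument, deriving the theorem entirely by citing Corollary 8.5, Lemmas 8.7--8.9, and Corollaries 8.10--8.11 of \cite{ST} --- precisely the results you invoke --- so that identifying $C$, $\ov{V}_{r}$, $\ov{V}_{s}$, $U$, $U'$, $\left\langle \mathbf{1}\right\rangle$ with nodes of the lattice determined there is the whole content of the proof. Your supporting observations (the parity dichotomy forced by $q\equiv -1\pmod{\lp}$, the placement of $C$ via $(A-rI)[W]={m-2 \brack 1}_{q}\mathbf{1}$, and the dimension counts) are consistent with the paper's surrounding computations, so the approach is essentially the same.
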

\subsection{$2$-elementary divisors of $S$ and $K$ when $m$ is odd.}
\paragraph{Elementary divisors of $S$.}
We identify $\ov{M}_{i}(A)$ with $\ov{M}_{i}$ and $\ov{A}_{2}$ with $\ov{A}$.
Since $m$ is odd, ${m-2 \brack 1}_{q}$ is an odd number. So we have 
$v_{2}(r)=v_{2}(q^{m-2}-1)=v_{2}(q-1)$. As ${m-1 \brack 1}_{q}$ is an even number, $v_{2}(q^{m-1}-1) >1$, and thus $v_{2}(q^{m-1}+1)=1=v_{2}(s)$. We also have $v_{2}(k)=v_{2}(\mu)=1$. Setting $v_{2}(q-1)=w$, we have $v_{2}(r)=w$ and $v_{2}(|S|)=wf+g+1$.

By Lemma \ref{subair1}, we have $U \subset \ov{M}_{w+1}$. So by Theorem \ref{sto-}, we get $\dim(\ov{M}_{w+1}) \geq g+1$.
Since $r$ is an integer eigenvalue with valuation $w$, Lemma \ref{eigenval} implies $\dim(\ov{M}_{w}) \geq f$.

Now we have $(w)(f-(g+1))+(w+1)(g+1)=wf+g+1=v_{2}(|S|)$. So by Lemma \ref{main}, setting $j=2$, $t_{1}=w$, $t_{2}=w+1$, $s_{1}=f$, $s_{2}=g+1$, $s_{3}=\dim(\ov{\ker(A)})=0$, we conclude that $e_{0}=g+1$, $e_{w}=f-g-1$, $e_{w+1}=g+1$, and $e_{i}=0$ for all other $i$.
 
\paragraph{Elementary divisors of $K$.}
We identify $\ov{M}_{i}(L)$ with $\ov{M}_{i}$ and $\ov{L}_{2}$ with $\ov{L}$.
Set $v_{2}(q^{m}+1)=c$ and $v_{2}({m-1 \brack 1}_{q})=b$. Since $m$ is odd, we have $v_{2}(s)=v_{2}(q^{m-1}+1)=1$. So $v_{2}(t)=c$, $v_{2}(u)=b+1$, $v_{2}(v)=c+b$, $v_{2}(\mu)=1$, and $v_{2}(|K|)=cf+(b+1)g-(c+b)$.

As $t$ is an eigenvalue of valuation $f$, by Lemma \ref{eigenval}, we have 
$\dim(\ov{M_{c}}) \geq f$.

Lemma \ref{subair1} implies that $U \subset \ov{M}_{c+1}$ and $U' \subset \ov{M}_{b+c+1}$. Therefore $\dim(\ov{M_{b+c+1}}) \geq g$ and $\dim(\ov{M_{c+1}})\geq g+1$, by Theorem \ref{sto-}.

Now, $c(f-(g+1))+(c+1)(g+1-g)+(b+c+1)(g-1)=cf+(b+1)g-b-c$.
So by Lemma \ref{main}, setting $j=3$, $s_{1}=f$, $s_{2}=g+1$, $s_{3}=g$, $s_{4}=\dim(\ov{\ker(L)})=1$, $t_{1}=c$, $t_{2}=c+1$, and $t_{3}=b+c+1$, we may conclude that
$e_{0}=g+1$, $e_{c}=f-g-1$, $e_{c+1}=1$, $e_{b+c+1}=g-1$ and $e_{i}=0$ for all other $i$.

\subsection{$2$-elementary divisors of $S$ and $K$ when $m$ is even.}
\paragraph{Elementary divisors of $S$.}
We identify $\ov{M}_{i}(A)$ with $\ov{M}_{i}$ and $\ov{A}_{2}$ with $\ov{A}$.
As $m$ is even, ${m-2 \brack 1}_{q}$ is even and ${m-1 \brack 1}_{q}$ is odd. Set $v_{2}({m-2 \brack 1}_{q})=a$, $v_{2}(q^{m-1}-1)=v_{2}(q-1)=w$ and $v_{2}(q^{m-1}+1)=d$. Then $v_{2}(r)=a+w$, $v_{2}(s)=d$, $v_{2}(k)=v_{2}(\mu)=a+d$, and $v_{2}(|S|)=(a+w)f+dg+a+d$.

Lemma \ref{subair1}, implies that $C \subset \ov{M}_{a}$. Thus by Theorem \ref{sto-} we have $\dim(\ov{M}_{a})\geq f+1$.

As $r$ is an eigenvalue of valuation $a+w$, Lemma \ref{eigenval} implies  $\dim(\ov{M}_{a+w})\geq f$.
By Lemma \ref{subair1}, $U \subset  \ov{M_{a+d+w}}$. Thus by Theorem \ref{sto-}, $\dim(M_{a+d+w})\geq g+1$.

We have $a(f+1 -f)+(a+w)(f-(g+1))+(a+d+w)(g+1)=(a+w)f+dg+a+d$.
So by Lemma \ref{main}, setting $j=3$, $s_{1}=f+1$, $s_{2}=f$, $s_{3}=g+1$, $s_{4}=\dim(\ov{\ker(A)})=0$, $t_{1}=a$, $t_{2}=a+w$, and $t_{3}=a+d+w$, we have $e_{0}=g$, $e_{a}=1, e_{a+w}=f-g-1, e_{a+d+w}=g+1$ and $e_{i}=0$ for all other $i$.

\paragraph{Elementary divisors of $K$.}
We identify $\ov{M}_{i}(L)$ with $\ov{M}_{i}$ and $\ov{L}_{2}$ with $\ov{L}$.
As $m$ is even, ${m-2 \brack 1}_{q}$ is even and ${m-1 \brack 1}_{q}$ is odd.
Set $v_{2}({m-2 \brack 1}_{q})=a$ and $v_{2}(s)=v_{2}(q^{m-1}+1)=d$. As ${m \brack 1}_{q}$ is even, we have $v_{2}(q^{m}-1)>1$ and $v_{2}(q^{m}+1)=1$. Since ${m-1 \brack 1}_{q}$ is odd, we have $v_{2}(v)=1$. We have $v_{2}(t)=a+1$, $v_{2}(u)=d$, and $v_{2}(\mu)=a+d$, and $v_{2}(|K|)=(a+1)f+dg-1$.

By Lemma \ref{subair1}, we have $C \subset \ov{M}_{a}$ and $U \subset \ov{M}_{a+d+1}$. Therefore by Theorem \ref{sto-}, we have $\dim(\ov{M}_{a}) \geq f+1$ and $\dim(\ov{M_{a+d+1}}) \geq g+1$.
As $t$ is an integer eigenvalue of $L$ with valuation $a+1$, Lemma \ref{eigenval} implies $\dim(\ov{M}_{a+1})\geq f$.

We have $a(f+1-f)+(a+1)(f-(g+1))+(a+d+1)(g+1-1)=(a+1)f+dg-1=v_{2}(|K|)$.
Therefore by Lemma \ref{main}, setting $j=3$, $s_{1}=f+1$, $s_{2}=f$, $s_{3}=g+1$, $s_{4}=\dim(\ov{\ker(L)})=1$, $t_{1}=a$, $t_{2}=a+1$, and $t_{3}=a+d+1$,
we have $e_{0}=g$, $e_{a}=1$, $e_{a+1}=f-g- 1$, $e_{a+d+1}=g$, and $e_{i}=0$ for all other $i\neq 0$.
\subsection{$\lp$-elementary divisors of $S$ and $K$ when $m$ is even, $\lp \neq 2$ and $\lp \mid q+1$.}
\paragraph{Elementary divisors of $S$.}
We identify $\ov{M}_{i}(A)$ with $\ov{M}_{i}$ and $\ov{A}_{\lp}$ with $\ov{A}$.
In this case $\lp$ is an odd prime dividing $q+1$ and $m$ is even. Thus $v_{\lp}({m-2 \brack 1}_{q})=v_{2}(r)$. 
Set
$v_{\lp}({m-2 \brack 1}_{q})=a$ and $v_{\lp}(s)=v_{\lp}(q^{m-1}+1)=d$. Then $v_{p}(r)=a$, $v_{\lp}(k)=a+d=v_{\lp}(\mu)$, and $v_{\lp}(|S|)=af+dg+a+d$.

By Lemma \ref{subair1}, we have $C \subset \ov{M_{a}}$ and  $U \subset \ov{M}_{a+d}$. Thus $\dim(\ov{M}_{a})\geq f+1$ and $\dim(\ov{M}_{a+d}) \geq g+1$, by Theorem \ref{sto-}.

We have $a(f+1-(g+1))+(a+d)(g+1)=v_{\lp}(|S|)$.
So by Lemma \ref{main}, setting $j=2$, $s_{1}=f+1$, $s_{2}=g+1$, $s_{3}=\dim(\ov{\ker(A)})=0$, $t_{1}=a$, $t_{2}=a+d$, we conclude $e_{0}=g$ $e_{a}=f-g$, $e_{a+d}=g+1$, and $e_{i}=0$ for all other $i$.

\paragraph{Elementary divisors of $K$.}
We identify $\ov{M}_{i}(L)$ with $\ov{M}_{i}$ and $\ov{L}_{\lp}$ with $\ov{L}$.
In this case $\lp$ is an odd prime with $m$ even. We set
$v_{\lp}({m-2 \brack 1}_{q})=a$, and $v_{\lp}(q^{m-1}+1)=d$. As  $q \equiv -1 \pmod \lp$, we have $v_{\lp}(v)=v_{\lp}\left(\left({m-1 \brack 1}_{q}(q^{m}+1)\right)\right)=0$. Thus $v_{\lp}(t)=a$, $v_{\lp}(u)=d$, and $v_{\lp}(|K|)=af+dg$. 

Lemma \ref{subair1} gives us $C \subset \ov{M_{a}}$ and $U \subset \ov{M}_{a+d}$. Now Theorem \ref{sto-} gives us $\dim(\ov{M_{a}}) \geq f+1$ and $\dim(\ov{M_{a+d}}) \geq g+1$.

We have $a(f+1-(g+1))+(a+d)(g+1-1)=af+dg=$.
So by Lemma \ref{main}, setting $j=2$, $s_{1}=f+1$, $s_{2}=g+1$, $s_{3}=\dim(\ov{\ker(A)})=1$, $t_{1}=a$, $t_{2}=a+d$, we have $e_{0}=g$,  $e_{a}=f-g$, $e_{a+d}=g$, and $e_{i}=0$ for all other $i\neq 0$.

\subsection{$\lp$-elementary divisors of $K$ when $m$ is odd, $\lp \neq 2$ and $\lp \mid q+1$.\\}
In this case we have $q \equiv -1 \pmod{\lp}$ and thus  $r \equiv s \equiv -2 \pmod{\lp}$ and thus $\lp \nmid |S|$. However we see that $\lp\mid t$ and $\lp \mid u$ and thus $\lp \mid |K|$. In this section we compute the $\lp$-elementary divisors of $K$.
\paragraph{Elementary divisors of $K$.}
We identify $\ov{M}_{i}(L)$ with $\ov{M}_{i}$ and $\ov{L}_{\lp}$ with $\ov{L}$.
Set $v_{\lp}(q^{m}+1)=c$ and $v_{\lp}({m-1 \brack 1}_{q})=b$. Since $m$ is odd, we have $v_{\lp}(s)=v_{\lp}(q^{m-1}+1)=0$. So $v_{\lp}(t)=c$, $v_{\lp}(u)=b$, $v_{\lp}(v)=c+b$, $v_{\lp}(\mu)=0$, and $v_{\lp}(|K|)=cf+bg-(c+b)$.

Lemma \ref{subair1} gives us $U' \subset \ov{M}_{b+c}$ and $\ov{V}_{r} \subset \ov{M_{c}}$. Therefore $\dim(\ov{M_{b+c}}) \geq g$ and $\dim(\ov{M_{c}})\geq f$, by Theorem \ref{sto-}.
Now we have $v_{\lp}(|K|)=cf+bg-c-b= c(f-g)+(b+c)(g-1)$.

So by Lemma \ref{main}, setting $j=2$, $s_{1}=f$, $s_{2}=g+1$, $s_{3}=\dim(\ov{\ker(L)})=1$, $t_{1}=c$, $t_{2}=b+c$, we may conclude that
$e_{0}=g+1$, $e_{c}=f-g$, $e_{b+c}=g-1$, and $e_{i}=0$ for all other $i$.

\section{$\lp$-elementary divisors of $S$ and $K$ when $\Gamma(V)=\Gamma_{o+}(q,m)$, and $\lp \mid q+1$}\label{op}
Given the graph $\Gamma_{o+}(q,m)$ and a prime $\lp$, table \ref{nilconst} shows that $\ov{A}_{\lp}$ is nilpotent if and only if either i) $\lp=2$ and $q$ is odd; ii) or $\lp$ is odd with $\lp \mid q+1$ and $m$ is odd. 
Also $\ov{L}_{\lp}$ is nilpotent if and only if $\lp \mid q+1$
In this section we compute the $\lp$-elementary divisors of $S$ and $K$ when $\Gamma(V)= \Gamma_{o+}(q,m)$ and $(\lp, q,m)$ satisfy the arithmetic conditions given above.

From now on in this section, we denote $\Gamma_{o+}(q,m)$ by $\Gamma_{o+}$, and $\lp$ is a prime that meets the description in the previous paragraph. We set $h=0$, and $z=m$ in Lemma \ref{rel} and Lemma \ref{par} to get parameters for this graph. Thus $\Gamma_{o+}(V)$ is an SRG with parameters $v={m \brack 1}_{q}(q^{m-1}+1)$, $k = q{m-1 \brack 1}_{q}(q^{m-2}+1)$, $\lambda=q{m-1 \brack 1}_{q}(q^{m-2}+1)-1-q^{2m-3}$, and $\mu= {m-1 \brack 1}_{q}(q^{m-2}+1)$.

So the eigenvalues of the adjacency matrix $A$ are $(k,r,s)=(k,q^{m-1}-1,-(1+q^{m-2}))$,
with multiplicities $(1,f,g)=\left(1,\ \frac{q(q^{m}-1)(q^{m-2}+1)}{(q^{2}-1)},\ \frac{q^{2}(q^{m-1}+1)(q^{m-1}-1)}{(q^{2}-1)}\right)$.

So $L$ has eigenvalues $(0,t,u)=(0,k-r,k-s)=\left(0,{m-1 \brack 1}_{q}(1+q^{m-1}), {m \brack 1}_{q}(1+q^{m-2})\right)$
with multiplicities $(1,f,g)$. 
\subsection{Submodule Structure.}
We now recall from \S\ref{act} the definitions of $C$, $C'$ $U$, $U'$, $\ov{V}_{r}$ and $\ov{V}_{s}$ in the context of the graph $\Gamma_{o+}$. In this case $\G=\mathrm{O^+}(2m,q)$.
By Corollary 2.10, 6.5, Lemma 6.6 of \cite{ST} we have the following result.
\begin{theorem}\label{sto+}
Let $\lp$ be a prime with $\lp \mid q+1$.
Then the relative positions of $U^{\perp}$, $C$, $\ov{V}_{s}$, $\ov{V}_{r}$, $U$, $U'$, and $\left\langle \mathbf{1} \right\rangle$ in the $\bb{F}_{\lp}\mathrm{O^+}(2m,q)$ submodule structure of $(U')^{\perp}$ are given in the following diagrams. We have $\dim((U')^{\perp})=g+2$.

\small{
\begin{minipage}[t][][c]{0.40\textwidth}
$\lp=2$ and $m$ is even

\xymatrix{
 (U')^{\perp} \ar@{-}[d]  \ar@{-}[dr]& \\
C \ar@{-}[d]& U^{\perp} \ar@{-}[dl]  \ar@{-}[d]\\
U=\ov{V}_{r} \ar@{-}[dr] &\ov{V}_{s}\ar@{-}[d]&\\
&U' \ar@{-}[d]  \\
& \left\langle \mathbf{1} \right\rangle  
 }
\end{minipage}
\begin{minipage}[t][][c]{0.40\textwidth}
$\lp= 2$ and $m$ is odd 

\xymatrix{
 (U')^{\perp} \ar@{-}[d]  \ar@{-}[dr] &&\\
C \ar@{-}[d]& U^{\perp} \ar@{-}[dl]  \ar@{-}[d]&\\
 U=\ov{V}_{r} \ar@{-}[d] \ar@{-}[dr] & \ov{V}_{s} \ar@{-}[d]&\\
\left\langle \mathbf{1} \right\rangle &U' & 
}
\end{minipage}

\begin{minipage}[t][][c]{0.40\textwidth}
$\lp\neq 2$, $m$ is odd and $\lp \mid q+1$

\xymatrix{
& (U')^{\perp} \ar@{-}[d] &\\
& U^{\perp} \ar@{-}[dl]  \ar@{-}[dr] & \\
C \ar@{-}[d] \ar@{-}[dr] && \ov{V}_{s} \ar@{-}[ddl]\\
U \ar@{-}[d] \ar@{-}[dr] & \ov{V}_{r} \ar@{-}[d] & \\
 \left\langle \mathbf{1} \right\rangle & U' &
 }
\end{minipage}
\begin{minipage}[t][][c]{0.40\textwidth}
$\lp\neq 2$, $m$ is odd and $\lp \mid q+1$

\xymatrix{
& (U')^{\perp} \ar@{-}[d] &\\
& U^{\perp} \ar@{-}[dl]  \ar@{-}[dr] & \\
C \ar@{-}[d] \ar@{-}[dr] && \ov{V}_{s} \ar@{-}[ddl]\\
U  \ar@{-}[dr] & \ov{V}_{r} \ar@{-}[d] & \\
  &U' \ar@{-}[d] &\\
  &\left\langle \mathbf{1} \right\rangle
 }
\end{minipage}
}

\normalsize Here, $\dim(U')=f-1$, $\dim(U)=f$, $\dim(C)=f+1$, $\dim(U^{\perp})=f+g+1-\dim(U)=g+1$.
\end{theorem}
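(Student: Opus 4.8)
The plan is to obtain the four stated diagrams by restricting the complete submodule structure of the rank-$3$ permutation module $\bb{F}_{\lp}\Po$ for $\G=\mathrm{O}^{+}(2m,q)$, which is determined in \cite{ST} in the cross-characteristic regime. Note that $\lp\mid q+1$ forces $\lp\neq p$, since $p\mid q$, so we are genuinely in the cross-characteristic case covered by \cite{ST}. The statement is then an assembly: a restriction of the global lattice to the interval below $(U')^{\perp}$, an identification of the geometrically defined modules $C$, $U$, $U'$, $\ov{V}_{r}$, $\ov{V}_{s}$ with the abstractly named submodules appearing in \cite{ST}, and a dimension count. I would organize the argument in three stages: self-duality and dimensions, identification of the modules, and the case distinction.

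First I would exploit self-duality. The form $(\ ,\ )$ with $\Po$ as an orthonormal basis is non-degenerate and $\G$-invariant, so $Z\mapsto Z^{\perp}$ is an inclusion-reversing anti-automorphism of the submodule lattice of $\bb{F}_{\lp}\Po$. Since the multiplicities $1,f,g$ of the eigenvalues $k,r,s$ of $A$ sum to the number of vertices, we have $v=1+f+g$ by Lemma \ref{par} and Lemma \ref{rel}. Granting $\dim(U)=f$ and $\dim(U')=f-1$ (read off from the definitions $U=(J-\ov{A}_{\lp})(\bb{F}_{\lp}\Po)$ and $U'=\langle (J-\ov{A}_{\lp})(\mathbf{v})-(J-\ov{A}_{\lp})(\mathbf{u})\rangle$ together with the rank of $\ov A_{\lp}$ in this characteristic), I get at once $\dim(U^{\perp})=v-\dim(U)=g+1$ and $\dim((U')^{\perp})=v-\dim(U')=g+2$, which are exactly the stated values.

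Next I would pin down the remaining modules. From the computation $A([W])={m-1 \brack 1}_{q}\mathbf{1}+r[W]$ in the proof of Lemma \ref{subair1} (here $\tilde q=q$, $z=m$), every generator $[W]$ of $C$ satisfies $(\ov A_{\lp}-\ov r I)[W]\in\langle\mathbf 1\rangle$, which places $C$ just above the reduced $r$-eigenspace $\ov V_{r}$ (of dimension $f$) with $\dim(C/\ov V_{r})=1$, giving $\dim(C)=f+1$. Matching $U$, $U'$, $\ov V_{r}$, $\ov V_{s}$ to their counterparts in \cite{ST} then identifies each geometric module with a node of the lattice; the values $\dim(\ov V_{r})=f$ and $\dim(\ov V_{s})=g$ are those recorded in \S\ref{act}. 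The only feature varying across the four pictures is whether $\langle\mathbf 1\rangle$ lies in $U$ or in $U'$ and whether the extension relating $\ov V_{r}$ and $U$ splits; these toggles are governed by the parity of $m$ and by whether $\lp=2$ through the valuations $v_{\lp}({m \brack 1}_{q})$ and $v_{\lp}(q^{m-1}\pm1)$, and the four combinations are precisely those treated in Corollary 2.10, 6.5 and Lemma 6.6 of \cite{ST}.

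The main obstacle is the identification stage, specifically verifying the non-chain (join and meet) relations rather than the mere dimensions: for instance that $C\vee\ov V_{s}=(U')^{\perp}$ and that in the odd-$m$ cases $C=U+\ov V_{r}$ with $U\cap\ov V_{r}=U'$ (consistent with $\dim C=f+\nobreak f-(f-1)=f+1$). Once the dictionary between the geometric description and the abstract modules of \cite{ST} is fixed, all four Hasse diagrams and every dimension follow formally from self-duality and the identity $v=1+f+g$.
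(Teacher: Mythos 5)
Your proposal is essentially the paper's own proof: the paper establishes Theorem \ref{sto+} simply by citing Corollary 2.10, Corollary 6.5 and Lemma 6.6 of \cite{ST}, and your argument likewise defers the submodule lattice and the identification of $C$, $U$, $U'$, $\ov{V}_{r}$, $\ov{V}_{s}$ to those results, adding only correct dimension bookkeeping via self-duality and $v=1+f+g$. One small caution: the computation $(\ov{A}-\ov{r}I)[W]\in\langle\mathbf{1}\rangle$ by itself does not give $\ov{V}_{r}\subseteq C$ or $\dim(C/\ov{V}_{r})=1$ (when $\lp\mid{m-1 \brack 1}_{q}$ it only shows $C\subseteq\ker(\ov{A}-\ov{r}I)$, which can strictly contain $\ov{V}_{r}$), so that identification must also be taken from \cite{ST}.
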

\subsection{$2$-elementary divisors of $S$ and $K$ when $m$ is even.}
\paragraph{Elementary divisors of $S$.}
We identify $\ov{M}_{i}(A)$ with $\ov{M}_{i}$ and $\ov{A}_{2}$ with $\ov{A}$.
In this case $m$ is even. Therefore ${m-1 \brack 1}_{q}$ is odd. So we have $v_{2}(r)=v_{2}(q^{m-1}-1)=v_{2}(q-1)$, $v_{2}(s)=v_{2}(q^{m-2}+1)=1$, and $v_{2}(k)=v_{2}(\mu)=1$. Setting $v_{2}(q-1)=w$, we have $v_{2}(|S|)=wf+g+1$.

Lemma \ref{eigenval} implies  $\ov{V}_{r} \subset \ov{M}_{1}$ and  $\ov{V}_{s} \subset \ov{M}_{1}$. Thus by Theorem \ref{sto+}, we see that $\ov{M}_{1} \supset U^{\perp}$, and hence $\dim(\ov{M}_{1}) \geq g+1$.
 Lemma \ref{subair1} gives us $U \subset \ov{M}_{w+1}$. Thus by  Theorem \ref{sto+},  we get $\dim(\ov{M}_{w+1}) \geq f$.
 
We have $1(g+1-f)+(w+1)f=v_{2}(|S|)$.
So by Lemma \ref{main}, setting $j=2$, $s_{1}=g+1$, $s_{2}=f$, $s_{3}=\dim(\ov{\ker(A)})=0$, $t_{1}=1$, and $t_{2}=w+1$, we conclude $e_{0}=f$, $e_{1}=g+1-f$, $e_{w+1}=f$, and $e_{i}=0$ for all $i$.
 
\paragraph{Elementary divisors of $K$.}
We identify $\ov{M}_{i}(L)$ with $\ov{M}_{i}$ and $\ov{L}_{2}$ with $\ov{L}$.
In this case $m$ is even, so we have
$v_{2}(s)=v_{2}(q^{m-2}+1)=1$ and $v_{2}(k)=v_{2}(\mu)=1$. Set $v_{2}(q^{m-1}+1)=c$ and $v_{2}({m \brack 1}_{q})=b$. We now have $v_{2}(t)=c$, $v_{2}(u)=b+1$, $v_{2}(v)=c+b$, and $v_{2}(|K|)=cf+(b+1)g-b-c$.

Lemma \ref{subair1} gives us $U'\subset \ov{M_{b+c+1}}$, $U \subset \ov{M_{c+1}}$, and $\ov{V}_{s} \subset \ov{M}_{b+1}$.
We use Lemma \ref{main} to conclude the following.
\begin{enumerate}
\item If $c < b$,  we have $\ov{M}_{c+1} \supset \ov{M}_{b+1}$ and thus $\ov{V}_{s} \subset \ov{M_{c+1}}$. Also since $U \subset \ov{M_{c+1}}$, Theorem \ref{sto+} implies  $U^{\perp} \subset \ov{M}_{c+1}$. Hence $\dim(\ov{M}_{c+1})\geq g+1$.
Again by Theorem \ref{sto+} $\dim(\ov{M}_{b+1}) \geq \dim(\ov{V}_{s}) \geq g$, and $\dim(\ov{M}_{b+c+1}) \geq \dim(U') \geq f-1$. 

So $(c+1)(g+1-g)+(b+1)(g-(f-1))+(b+c+1)(f-1-1)=v_{2}(|K|)$. 
Now by Lemma \ref{main}, setting $j=3$, $s_{1}=g+1$, $s_{2}=g$, $s_{3}=f-1$, $s_{4}=\dim(\ov{\ker(L)})=1$, $t_{1}=c+1$, $t_{2}=b+1$, and $t_{3}=b+c+1$, we have $e_{0}=f$, $e_{c+1}=1$, $e_{b+1}=g-f+1$, $e_{b+c+1}=f-2$, and $e_{i}=0$ for all other $i$.
\item If $c>b$, By arguments similar to those above we can show $\dim(\ov{M}_{b+1})\geq g+1$. We also have $\dim(\ov{M}_{c+1}) \geq f$, and $\dim(\ov{M}_{b+c+1})\geq f-1$.

So
$(b+1)(g+1-f)+(c+1)(f-(f-1))+(b+c+1)(f-1-1)=v_{2}(|K|)$. Applying Lemma \ref{main} as above, we get  $e_{0}=f$, $e_{c+1}=1$, $e_{b+1}=g-f+1$, $e_{b+c+1}=f-2$, and $e_{i}=0$ for all other $i\neq 0$.
\item If $b=c$, by similar arguments, we can show  $e_{0}=f$, $e_{c+1}=g-f+2$, $e_{2c+1}=f-2$, and $e_{i}=0$ for all other $i$.  
\end{enumerate} 
\subsection{$2$-elementary divisors of $S$ and $K$ when $m$ is odd.}
\paragraph{Elementary divisors of $S$.}
We identify $\ov{M}_{i}(A)$ with $\ov{M}_{i}$ and $\ov{A}_{2}$ with $\ov{A}$.
As $m$ is odd, we have $v_{2}(q^{m-1}+1)=1$ and $v_{2}({m-1 \brack 1}_{q})>1$. 
We set $v_{2}({m-1 \brack 1}_{q})=a$, $v_{2}(q-1)=w$ and $v_{2}(q^{m-2}+1)=d$. So $v_{2}(r)=a+w$, $v_{2}(s)=d$, $v_{2}(k)=v_{2}(\mu)=a+d$, and $v_{2}(|S|)=(a+w)f+dg+a+d$.

By Lemma \ref{subair1}, we have $C \subset \ov{M}_{a}$, $U \subset \ov{M}_{a+w+d}$, and 
$\ov{V}_{s} \subset \ov{M}_{d}$. Using Lemma \ref{main} we arrive at the following conclusions.
\begin{enumerate}
\item Assume that $d<a$, then $\ov{M}_{d} \supset \ov{M}_{a} \supset C$. Now since $\ov{V}_{s}$ and $C$ are subsets of $\ov{M}_{d}$,Theorem \ref{sto+} gives us $(U)'^{\perp} \subset \ov{M}_{d}$, and thus $\dim(\ov{M}_{d}) \geq g+2$. Since $C \subset \ov{M_{a}}$, we have $\dim(\ov{M}_{a}) \geq f+1$. Again by Theorem \ref{sto+} we get $\dim(\ov{M}_{a+w+d})\geq f=\dim(U)$. 

Now,
$d(g+2-(f+1))+a(f+1-f)+(a+w+d)(f)=v_{2}(|S|)$.
So by Lemma \ref{main}, setting $j=3$, $s_{1}=g+2$, $s_{2}=f+1$, $s_{3}=f$, $s_{4}=\dim(\ov{\ker(A)})$, $t_{1}=d$, $t_{2}=a$, and $t_{3}=a+w+d$, we have 
$e_{0}=f-1$, $e_{d}=g-f+1$, $e_{a}=1$, $e_{a+w+d}=f$, and $e_{i}=0$ for all other $i$.

\item If $a<d$, by arguments similar to the ones above, we can show $\dim(\ov{M}_{a})\geq g+2$. As $U \subset \ov{M}_{a+w+d} \subset \ov{M}_{d}$ and $\ov{V}_{s} \subset \ov{M}_{d}$, Theorem \ref{sto+} implies $\dim(\ov{M_{d}}) \geq g+1$, and $\dim(\ov{M}_{a+w+d}) \geq f$.

Now,
$a(g+2-(g+1))+d(g+1-f)+(a+w+d)(f)=v_{2}(|S|)$.
Applying Lemma \ref{main} as above, we have 
$e_{0}=f-1$, $e_{d}=g-f+1$, $e_{a}=1$, $e_{a+w+d}=f$, and $e_{i}=0$ for all other $i$. 
\item If $a=d$, by similar arguments we can show that $e_{0}=f-1$, $e_{a}=g+2-f$, $e_{a+d+w}=f$, and $e_{i}=0$ for all other $i$.
\end{enumerate}   
\paragraph{Elementary divisors of $K$.}
We identify $\ov{M}_{i}(L)$ with $\ov{M}_{i}$ and $\ov{L}_{2}$ with $\ov{L}$.
As $m$ is odd, we have $v_{2}(q^{m-1}+1)=1$ and $v_{2}({m-1 \brack 1}_{q})>1$. Set $v_{2}({m-1 \brack 1}_{q})=a$ and $v_{2}(s)=v_{2}(q^{m-2}+1)=d$.  So $v_{2}(t)=a+1$, $v_{2}(u)=d$, $v_{2}(v)=1$, and $v_{2}(|K|)=(a+1)f+dg-1$.

By Lemma \ref{subair1}, we have $C \subset \ov{M}_{a}$ and $U \subset \ov{M}_{a+d+1}$.
Again by Lemma \ref{subair1} we have $\ov{V}_{s}\subset \ov{M}_{d}$.
Using Lemma \ref{main} we arrive at the following conclusions.
\begin{enumerate}
\item Assume $d<a$, then $C\subset \ov{M}_{a} \subset \ov{M}_{d}$. As $\ov{V}_{s} \subset \ov{M}_{d}$, Theorem \ref{sto+} implies $(U')^{\perp} \subset \ov{M}_{d}$, and hence $\dim(\ov{M}_{d}) \geq \dim(U'^{\perp})=g+2$. Also $\dim(\ov{M}_{a})\geq \dim(C)=f+1$, and $\dim(\ov{M}_{a+d+1})\geq \dim(U)=f$.

Now,
$d(g+2-(f+1))+a(f+1-f)+(a+d+1)(f-1)=dg+(a+1)f-a-1=v_{2}(|K|)$.
So by Lemma \ref{main}, setting $j=3$, $s_{1}=g+2$, $s_{2}=g+1$, $s_{3}=f$, $s_{4}=\dim(\ov{\ker(L)})=1$, $t_{1}=a$, $t_{2}=d$, and $t_{3}=a+d+1$, we have
$e_{0}=f-1$, $e_{a}=1$, $e_{d}=g+1-f$, $e_{a+d+1}=f-1$, and $e_{i}=0$ for all $i$.
\item If $a<d$, by arguments similar to those above, $\dim(\ov{M}_{a}) \geq g+2$. As $\mathbf{1} \in \ov{M}_{d}$, and $\ov{V}_{s} \subset \ov{M}_{d}$, by Theorem \ref{sto+}, it follows that $\dim(\ov{M}_{d}) \geq g+1$. We also have $\dim(\ov{M}_{a+d+1}) \geq f$.

Now,
$a(g+2-(g+1))+d(g+1-f)+(a+d+1)(f-1)=dg+(a+1)f-a-1=v_{2}(|K|)$.
By applying Lemma \ref{main} as above, we have $e_{0}=f-1$, $e_{a}=1$, $e_{d}=g+1-f$, $e_{a+d+1}=f-1$, and $e_{i}=0$ for all other $i$.
\item If $a=d$, by arguments similar to those above we may show that $e_{0}=f-1$, $e_{a}=g+2-f$, $e_{2a+1}=f-1$, and $e_{i}=0$ for all other $i$.  
\end{enumerate}

\subsection{$\lp$-elementary divisors of $S$ and $K$ when $m$ is odd, and $\lp \mid q+1$.}
\paragraph{Elementary divisors of $S$.}
We identify $\ov{M}_{i}(A)$ with $\ov{M}_{i}$ and $\ov{A}_{\lp}$ with $\ov{A}$.
In this case $\lp\mid q+1$ is an odd prime with $m$ odd. So we have $v_{\lp}({m-1 \brack 1}_{q})=v_{\lp}(r)$. We set
$v_{\lp}({m-1 \brack 1}_{q})=v_{\lp}(r)=a$, and $v_{\lp}(s)=v_{\lp}(q^{m-2}+1)=d$. Then $v_{\lp}(k)=a+d=v_{\lp}(\mu)$ and $v_{\lp}(|S|)=af+dg+a+d$.

By Lemma \ref{subair1}, we have $U \subset \ov{M}_{a+d}$, $\ov{V}_{s} \subset \ov{M}_{d}$, and $\ov{V}_{r} \subset \ov{M}_{a}$. We now apply Theorem \ref{sto+} and Lemma \ref{main} to conclude the following.

\begin{enumerate}
\item Assume $a<d$, then  $\ov{V}_{s}\subset \ov{M}_{a} \subset \ov{M}_{d}$. Since 
$\ov{V}_{r} \subset \ov{M}_{a}$, Theorem \ref{sto+} implies $\dim(\ov{M}_{a})\geq \dim(U^{\perp})=g+1$. As  $U \subset \ov{M}_{a+d} \subset \ov{M}_{a}$, Theorem \ref{sto+} implies $\dim(\ov{M}_{a}) \geq \dim(U^{\perp}) \geq g+1$. From above, we have $\dim(\ov{M}_{a+d}) \geq \dim(U)=f$.

Now,
$a(g+2-(g+1))+d(g+1-f)+(a+d)f=v_{\lp}(|S|)$.
By Lemma \ref{main}, setting $j=3$, $s_{1}=g+2$, $s_{2}=g+1$, $s_{3}=f$, $s_{4}=\dim(\ov{\ker(A)})=1$, $t_{1}=a$, $t_{2}=d$, and $t_{3}=a+d$, we have
$e_{0}=f-1$, $e_{a}=1$, $e_{d}=g-f+1$, $e_{a+d}=f$, and $e_{i}=0$ for all other $i$.
\item If $d<a$, by arguments similar to those above, we can show that $\dim(\ov{M}_{d}) \geq g+1$. Since $U \subset \ov{M}_{a+d} \subset \ov{M}_{a}$ and $\ov{V}_{r} \subset \ov{M}_{a}$, Theorem \ref{sto+} implies $\dim(\ov{M}_{a}) \geq \dim(C)\geq f+1$. From above, we have $\dim(\ov{M}_{a+d}) \geq \dim(U)=f$. 

Now,
$b(g+2-(f+1))+d(f+1-f)+(a+d)f=v_{\lp}(|S|)$. 
By applying Lemma \ref{main}, we get $e_{0}=f-1$, $e_{a}=1$, $e_{d}=g-f+1$, $e_{a+d}=f$, and $e_{i}=0$ for all other $i$.
\item If $a=d$, by arguments similar to those above, we get $e_{0}=f-1$,  $e_{a}=g-f+2$, $e_{a+d}=f$, and  $e_{i}=0$ for all other $i$. 
\end{enumerate} 
\paragraph{Elementary divisors of $K$.}
We identify $\ov{M}_{i}(L)$ with $\ov{M}_{i}$ and $\ov{L}_{\lp}$ with $\ov{L}$.
In this case $\lp\mid q+1$ is an odd prime with $m$ odd, we have $v_{\lp}({m-1 \brack 1}_{q})>0$, $v_{\lp}(q^{m-1}+1)=0$, and $v_{\lp}({m \brack 1}_{q})=0$. Setting $v_{\lp}({m-1 \brack 1}_{q})=a$ $v_{2}(s)=v_{\lp}(q^{m-2}+1)=d$, we have $v_{\lp}(t)=a$, $v_{\lp}(u)=c$, $v_{\lp}(v)=0$, and $v_{\lp}(|K|)=af+dg$. 
As $L \equiv -A \pmod {2^{a+d}}$, we have $\ov{M}_{i}(L)=\ov{M}_{i}(A)$ for all $i\leq a+c$. So we have $U \subset \ov{M}_{a+d}$, $\ov{V}_{s} \subset \ov{M}_{d}$, and $\ov{V}_{r} \subset \ov{M}_{a}$

 Using this fact and Lemma \ref{main} we conclude the following. 
\begin{enumerate}
\item If $a<d$, then $\ov{M}_{a}\supset \ov{M}_{d} \supset \ov{V}_{s}$. Since $\ov{M}_{a} \supset \ov{V}_{r}$ as well, by Theorem \ref{sto+} we have $\dim(\ov{M}_{a}) \geq g+2$, $\dim(\ov{M}_{d}) \geq g+1$, and $\dim(\ov{M}_{a+d}) \geq f$. 

Now
$a(g+2-(g+1))+d(g+1-f)+(a+b)(f-1)=v_{\lp}(|K|)$.
So by Lemma \ref{main}, setting $j=3$, $s_{1}=g+2$, $s_{2}=g+1$, $s_{3}=f$, $s_{4}=\dim(\ov{\ker(L)})=1$, $t_{1}=a$, $t_{2}=d$, and $t_{3}=a+d$, we have $e_{0}=f-1$, $e_{a}=1$, $e_{d}=g-f+1$, and $e_{a+d}=f-1$, and $e_{i}=0$ for all other $i$. 
\item If $a>d$, we have $M_{d}\supset M_{a}$. So by similar arguments $\dim(\ov{M}_{d}) \geq g+2$. And by the above we have $\dim(\ov{M}_{a}) \geq g+1$, and $\dim(\ov{M}_{a+d}) \geq f$. 

Now
$a(g+2-(f+1))+d(f+1-f)+(a+d)(f-1)=v_{\lp}(|K|)$. By Lemma \ref{main}, we have $e_{0}=f-1$, $e_{a}=1$, $e_{d}=g-f+1$, and $e_{a+d}=f-1$, and $e_{i}=0$ for all other $i$. 
\item If $a=d$, by arguments similar to those above, we have $e_{0}=f-1$, $e_{a}=g-f+2$, $e_{2a}=f-1$, and $e_{i}=0$ for all other $i$.   
\end{enumerate}
\subsection{$\lp$-elementary divisors of $K$ when $m$ is even and $\lp \mid q+1$.\\}

In this case we have $q \equiv -1 \pmod{\lp}$ and thus  $r \equiv s \equiv -2 \pmod{\lp}$ and thus $\lp \nmid |S|$. However we see that $\lp\mid t$ and $\lp \mid u$ and thus $\lp \mid |K|$. In this section we compute the $\lp$-elementary divisors of $K$.
\paragraph{Elementary divisors of $K$.}
We identify $\ov{M}_{i}(L)$ with $\ov{M}_{i}$ and $\ov{L}_{\lp}$ with $\ov{L}$.
Set $v_{\lp}(q^{m-1}+1)=c$ and $v_{\lp}({m \brack 1}_{q})=b$. Since $m$ is odd, we have $v_{\lp}(s)=v_{\lp}(q^{m-2}+1)=0$. So $v_{\lp}(t)=c$, $v_{\lp}(u)=b$, $v_{\lp}(v)=c+b$, $v_{\lp}(\mu)=0$, and $v_{\lp}(|K|)=cf+bg-(c+b)$.

Lemma \ref{subair1} gives us
$\ov{V}_{r} \subset \ov{M_{c}}$, $\ov{V}_{s} \subset \ov{M}_{b}$ and $U' \subset \ov{M}_{b+c}$. We now use Lemma \ref{main} to conclude the following.
\begin{enumerate}
\item If $c < b$,  we have $\ov{M}_{c} \supset \ov{M}_{b}$ and thus $\ov{V}_{s} \subset \ov{M_{c}}$. Also since $\ov{V}_{r} \subset \ov{M_{c}}$, Theorem \ref{sto+} implies  $U^{\perp} \subset \ov{M}_{c}$. Hence $\dim(\ov{M}_{c})\geq g+1$.
Again by Theorem \ref{sto+} $\dim(\ov{M}_{b}) \geq \dim(\ov{V}_{s}) \geq g$, and $\dim(\ov{M}_{b+c}) \geq \dim(U') \geq f-1$. 

So $(c)(g+1-g)+(b)(g-(f-1))+(b+c)(f-1-1)=v_{2}(|K|)$. 
Now by Lemma \ref{main}, setting $j=3$, $s_{1}=g+1$, $s_{2}=g$, $s_{3}=f-1$, $s_{4}=\dim(\ov{\ker(L)})=1$, $t_{1}=c$, $t_{2}=b$, and $t_{3}=b+c$, we have $e_{0}=f$, $e_{c}=1$, $e_{b}=g-f+1$, $e_{b+c}=f-2$, and $e_{i}=0$ for all other $i$.
\item If $c>b$, By arguments similar to those above we can show $\dim(\ov{M}_{b})\geq g+1$. We also have $\dim(\ov{M}_{c}) \geq f$, and $\dim(\ov{M}_{b+c})\geq f-1$.

So
$(b)(g+1-f)+(c)(f-(f-1))+(b+c)(f-1-1)=v_{2}(|K|)$. Applying Lemma \ref{main} as above, we get  $e_{0}=f$, $e_{c}=1$, $e_{b}=g-f+1$, $e_{b+c}=f-2$, and $e_{i}=0$ for all other $i\neq 0$.
\item If $b=c$, by similar arguments, we can show  $e_{0}=f$, $e_{c}=g-f+2$, $e_{2c}=f-2$, and $e_{i}=0$ for all other $i$.  
\end{enumerate} 

\section{$\lp$-elementary divisors of $S$ and $K$ when $\Gamma(V)=\Gamma_{ue}(q,m)$, and $\lp \mid q+1$.}
\label{ue}
Given the graph $\Gamma_{ue}(q,m)$ and a prime $\lp$, table \ref{nilconst} shows that $\ov{A}_{\lp}$ (equivalently $\ov{L}_{\lp}$) is nilpotent if and only if $\lp \mid q+1$. In this section we compute the $\lp$-elementary divisors of $S$ and $K$ when $\Gamma(V)= \Gamma_{ue}(q,m)$ and $\lp \mid q+1$. 

 We set $h=\frac{1}{2}$, and $z=m$ in Lemma \ref{rel} and Lemma \ref{par} to get parameters for this graph. Thus $\Gamma_{ue}(q,m)$ is an SRG with parameters 
$v={m \brack 1}_{q^{2}}(q^{2m-1}+1)$, $k= q^{2}{m-1 \brack 1}_{q^{2}}(q^{2m-3}+1)$ ,$\ \lambda =
(q^{2}-1)+q^{4}((q)^{2m-5}+1){m-2 \brack 1}_{q^{2}}$ , and $\mu ={m-1 \brack 1}_{q^{2}}(q^{2m-3}+1)$.

The adjacency matrix $A$ has eigenvalues $(k,r,s)=(k,q^{2m-2}-1,-(1+q^{2m-3}))$,
with multiplicities $(1,f,g)=\left(1, \dfrac{q^{2}{m \brack 1}_{q^{2}}(q^{2m-3}+1)}{q+1}, \dfrac{q^{3}{m-1 \brack 1}_{q^{2}}(q^{2m-1}+1)}{q-1} \right)$.
So $L$ has eigenvalues $(0,t,u)=\left(0,{m-1 \brack 1}_{q^{2}}(1+q^{2m-1}), {m \brack 1}_{q^{2}}(1+q^{2m-3})\right)$
with multiplicities $(1,f,g)$.
\subsection{Submodule Structure}We now recall from \S\ref{act} the definitions of $C$, $C'$ $U$, $U'$, $\ov{V}_{r}$ and $\ov{V}_{s}$ in the context of the graph $\Gamma_{ue}$. In this case $\G=\mathrm{U}(2m,q^2)$.
By Corollary $2.10$, Corollary $4.5$, and Lemma 4.6 of \cite{ST}, we have the following result.
\begin{theorem}\label{stoue}
When $\lp \mid q+1$, the module $U'^{\perp}$ has a submodule $M$ containing $\ov{V}_{s}$ such that $\dim(M/\ov{V}_{s})=1$.
The relative positions of $M$,$U^{\perp}$, $C$, $\ov{V}_{s}$, $\ov{V}_{r}$, $U$, $U'$, and $\left\langle \mathbf{1} \right\rangle$ in the $\bb{F}_{\lp}\mathrm{U}(2m,q^2)$ submodule structure of $(U')^{\perp}$ are given in the following  diagrams.\\
\small{\begin{minipage}[t][][c]{0.48 \textwidth}
$\lp \nmid m$\\
\xymatrix{
& (U')^{\perp} \ar@{-}[d] \ar@{-}[ddl] \ar@{-}[ddr] &\\
& U^{\perp} \ar@{-}[ddl] \ar@{-}[ddr]&\\
C \ar@{-}[d] & & M \ar@{-}[d] \ar@{-}[ddll]  \\
\ov{V}_{r}\ar@{-}[d] & & \ov{V}_{s} \ar@{-}[ddl] \\
 U \ar@{-}[d]  \ar@{-}[dr] & &\\
\left\langle \mathbf{1} \right\rangle & U' & 
}
\end{minipage}
\begin{minipage}[t][][c]{0.45 \textwidth}
$\lp \mid m$ \\
\xymatrix{
& U'^{\perp}  \ar@{-}[ddl] \ar@{-}[d]& \\
& U^{\perp} \ar@{-}[dr]  \ar@{-}[ddl]& \\
C \ar@{-}[d]& & M \ar@{-}[ddll] \ar@{-}[d]\\
\ov{V}_{r}\ar@{-}[d] & &\ov{V}_{s} \ar@{-}[ddl] \\
U \ar@{-}[dr] & & \\
& U' \ar@{-}[d]&\\
& \left\langle \mathbf{1} \right\rangle &
}
\end{minipage}}

\normalsize Here $\delta:=\dim(\ov{V}_{r}/U)=\dfrac{q^{2m}-1}{q+1}$, $y:=\dim(\ov{V}_{s}/U')=\dfrac{(q^{2m}-1)(q^{2m-1}-q)}{(q+1)^{2}}$, $x:=\dim(U')= \dfrac{(q^{2m}-1)(q^{2m-1}+1)}{(q^{2}-1)(q-1)}$, and $\dim(M/\ov{V}_{s})=\dim(U/U')=\dim(C/ \ov{V}_{r})=1$.
\end{theorem}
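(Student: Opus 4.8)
The plan is to obtain the two Hasse diagrams by combining the general determination of the submodule structure of the rank-$3$ permutation module $\bb{F}_{\lp}\Po$ for $\mathrm{U}(2m,q^2)$ carried out in \cite{ST} (resting on \cite{LST}, \cite{Lie1}, \cite{Lie2}) with explicit identifications of the concretely defined modules $C$, $U$, $U'$, $\ov{V}_{r}$, $\ov{V}_{s}$, $M$ and $\langle\mathbf{1}\rangle$. The organising principle is self-duality: because the symmetric form $(\ ,\ )$ has $\Po$ as an orthonormal basis, $\bb{F}_{\lp}\Po$ is a self-dual $\bb{F}_{\lp}\G$-module and $Z\mapsto Z^{\perp}$ is a $\G$-equivariant anti-automorphism of its submodule lattice. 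Hence the lattice of $(U')^{\perp}$ is anti-isomorphic to that of the quotient $\bb{F}_{\lp}\Po/U'$, and the large modules $U^{\perp}$ and $(U')^{\perp}$ are forced once the small modules $U'\subseteq U$ are located; so it suffices to pin down the foot of the lattice and dualise.

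The identifications rest on two clean computations in $\bb{F}_{\lp}\Po$ for a maximal totally isotropic $W$. Since $\lp\mid q+1$ we have $q^{2}\equiv 1$, so $\ov{r}\equiv 0$ and the relation $A([W])={m-1 \brack 1}_{q^2}\mathbf{1}+r[W]$ from the proof of Lemma \ref{subair1}(2) reduces to $\ov{A}[W]\equiv (m-1)\mathbf{1}$, while $J[W]={m \brack 1}_{q^2}\mathbf{1}$ yields $(J-\ov{A})[W]=\mathbf{1}$. The second identity shows $\mathbf{1}\in U$ and, since $J-\ov{A}$ is symmetric and $U'$ is the image of $J-\ov{A}$ on sum-zero vectors, that $([W],(J-\ov{A})w)=(\mathbf{1},w)=0$ for sum-zero $w$; thus $C\subseteq (U')^{\perp}$ while $C\not\subseteq U^{\perp}$. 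The first identity exhibits $C$ as a module having $\ov{V}_{r}$ as a codimension-one submodule, the class of $[W]$ spanning $C/\ov{V}_{r}$. The remaining dimensions are then read off from $f$ and $g$ of Lemma \ref{rel} and the $\lp$-rank of the point--maximal incidence matrix, giving $\dim U'=x$, $\dim U=x+1$, $\dim\ov{V}_{s}=x+y=g$, $\dim\ov{V}_{r}=(x+1)+\delta=f$, and $\dim C=f+1$; checking $x+y=g$ and $\delta+x+1=f$ against the displayed closed forms is a routine Gaussian-binomial manipulation.

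The submodule $M$ is then realised as $M=\ov{V}_{s}+U$ with $\ov{V}_{s}\cap U=U'$, so that $\dim M=g+(x+1)-x=g+1$ and $\dim(M/\ov{V}_{s})=1$, its position between $U^{\perp}$ and $\ov{V}_{s}$ following by duality. The genuine obstacle, and the source of the two separate diagrams, is whether $\mathbf{1}\in U'$. Because $q\equiv -1\pmod{\lp}$ forces ${m \brack 1}_{q^2}\equiv m\pmod{\lp}$, the coordinate sum $([W],\mathbf{1})={m \brack 1}_{q^2}$ vanishes modulo $\lp$ exactly when $\lp\mid m$; tracing this through $(J-\ov{A})[W]=\mathbf{1}$ shows $\mathbf{1}\in U'$ iff $\lp\mid m$. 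When $\lp\mid m$ one obtains the chain $\langle\mathbf{1}\rangle\subseteq U'\subseteq U$, whereas when $\lp\nmid m$ the vector $\mathbf{1}$ escapes $U'$ and $U=\langle\mathbf{1}\rangle\oplus U'$ splits, so $\langle\mathbf{1}\rangle$ sits as a second maximal submodule of $U$ incomparable to $U'$. Making this dichotomy rigorous, by matching $\mathbf{1}$, $U'$ and $M$ against the constituent multiplicities in \cite{ST} and confirming every covering relation (including the mutual incomparability of $C$, $U^{\perp}$ and $M$ below $(U')^{\perp}$), is where the real work lies; once it is settled the two diagrams assemble directly.
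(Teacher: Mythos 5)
The paper gives no argument for this theorem at all: it is imported wholesale from \cite{ST} (Corollary 2.10, Corollary 4.5 and Lemma 4.6 there), so the real comparison is between your sketch and the source. Your elementary computations are correct and do recover genuine features of the picture: the identity $(J-\ov{A})[W]=\mathbf{1}$ (valid because ${m\brack 1}_{q^{2}}\equiv m$ and ${m-1\brack 1}_{q^{2}}\equiv m-1 \pmod{\lp}$) really does give $\mathbf{1}\in U$ and $C\subseteq (U')^{\perp}$, $C\not\subseteq U^{\perp}$; and the observation that $[W]$ is a sum-zero vector exactly when $\lp\mid m$, whence $\mathbf{1}\in U'$ in that case, is precisely the mechanism that separates the two diagrams. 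What you cannot get this way, and what your sketch quietly presents as if it were routine, are the two facts that carry all the weight. First, $\dim U'=x$ is a cross-characteristic rank theorem and one of the main results of \cite{LST}/\cite{ST}; it is not ``read off'' from $f$, $g$ and an incidence matrix, and without it none of the codimension-one claims ($\dim(U/U')=\dim(C/\ov{V}_{r})=\dim(M/\ov{V}_{s})=1$) follow. Likewise ``the first identity exhibits $C$ as a module having $\ov{V}_{r}$ as a codimension-one submodule'' overstates what $A[W]={m-1\brack 1}_{q^{2}}\mathbf{1}+r[W]$ gives you: that relation places $C$ inside $\ov{M}_{v_{\lp}(r)}(A)$ but does not by itself show $\ov{V}_{r}\subseteq C$. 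Second, a Hasse diagram is an assertion of completeness --- that these are \emph{all} the submodules of $(U')^{\perp}$ and the displayed edges are the only covers --- and that requires the full composition-series and multiplicity-one analysis of the permutation module, which only \cite{ST} supplies. You flag this honestly as ``where the real work lies,'' and since the paper itself discharges that work with a bare citation, your proposal is best read as the same proof-by-citation augmented with correct and genuinely illuminating verifications of the containments and of the $\lp\mid m$ dichotomy; just be aware that the two items above are the theorem, not bookkeeping.
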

\subsection{$\lp$-elementary divisors of $S$ and $K$ when $\lp \mid q+1$ and $\lp \nmid m$.}
\paragraph{Elementary divisors of $S$.}
We identify $\ov{M}_{i}(A)$ with $\ov{M}_{i}$ and $\ov{A}_{\lp}$ with $\ov{A}$.
In this case, $\lp \mid q+1$ and $\lp \nmid m$.
Set $v_{\lp}(q^{2}-1)=w$, $v_{\lp}({m-1 \brack 1}_{q^{2}})=a$, and $v_{\lp}(q^{2m-3}+1)=d$. Then $v_{\lp}(r)=w+a$, $v_{\lp}(s)=d$, $v_{\lp}(k)=v_{\lp}(\mu)=a+d$, and $v_{\lp}(|S|)=(w+a)f+dg+a+d$.

Now $s$ is an eigenvalue of valuation $d$ and $k$ is an eigenvalue of valuation $a+d$. 
So Lemma \ref{eigenval} implies $\ov{V}_{s} \subset \ov{M}_{d}$ and $\left\langle \mathbf{1} \right\rangle \subset \ov{M}_{a+d} \subset \ov{M}_{d}$. So by Theorem \ref{stoue}, we have $\ov{M}_{d} \supset \ov{V}_{s}\oplus \left\langle \mathbf{1} \right\rangle$.

Lemma \ref{subair1} implies $C \subset \ov{M}_{a}$, $U \subset \ov{M}_{w+a+d}$, and $\ov{V}_{r} \subset \ov{M_{w+a}}$.

For any positive integer $n$, we have $q^{2n+1}+1=(q+1)\left({2n+1 \brack 1}_{-q}\right)$. Therefore if $\lp \mid q+1$, the following are true.

1) $v_{\lp}(q^{2n+1}+1)=v_{\lp}(q+1)$ if and only if $\lp \mid 2n+1$.

2) $v_{\lp}({m-1 \brack 1}_{q^{2}})=0$ if and only if $\lp \mid m-1$. 

3) $v_{\lp}(q^{2}-1)=v_{\lp}(q+1)$ if and only if $\lp\neq 2$.

\subparagraph*{Subcase 1:When $\lp \nmid m-1$.}

In this case, $a=0$, since $v_{\lp}({m-1 \brack 1}_{q^{2}})=0$. We have $v_{\lp}(|S|)=wf+dg+d$. We apply Lemma \ref{main} and Theorem \ref{stoue} to arrive at the following results.
\begin{enumerate}
\item Assume $w< d $, then we have as $\ov{M}_{d} \supset \ov{V}_{s}\oplus \left\langle \mathbf{1} \right\rangle $, and $\ov{V}_{r} \subset \ov{M}_{d} \subset \ov{M}_{w} \supset \ov{V}_{r}$. So by Theorem \ref{stoue}, $U^{\perp}= \ov{V}_{r}+ \ov{V}_{s} \supset \ov{M}_{w}$. We saw that $\ov{M}_{w+d} \supset U$.
Again by Theorem \ref{stoue}, we have $\dim(\ov{M}_{w})\geq \dim(U^{\perp})=f+g+1-\dim(U)=f+g-x$, $\dim(\ov{M}_{d}) \geq g+1$, and $\dim(\ov{M}_{w+d}) \geq \dim(U)=x+1$.

Now $a(f+g-x-(g+1))+d((g+1)-(x+1))+(w+d)(x+1)=wf+dg+d=v_{\lp}(|S|)$. 
So by Lemma \ref{main}, setting $j=3$, $s_{1}=f+g-x$, $s_{2}=g+1$, $s_{3}=x+1$, $s_{4}=\dim(\ov{\ker(A)})=0$, $t_{1}=w$, $t_{2}=d$, and $t_{3}=w+d$, we have $e_{0}=x+1$, $e_{w}=f-x-1$, $e_{d}=g-x$, $e_{w+d}=x+1$, and $e_{i}=0$ for all other $i$.  
\item If $w >d$, by arguments similar to those above we can show that $\ov{M}_{d} \supset U^{\perp}$, $\ov{M}_{w} \supset \ov{V}_{r}$, and $M_{w+d} \supset U$. Applying Lemma \ref{main} as above, we can conclude that $e_{0}=x+1$, $e_{w}=f-x-1$, $e_{d}=g-x$, $e_{w+d}=x+1$, and $e_{i}=0$ for all other $i$.  
\item If $w=d$, again by arguments similar to those above, we can show that $e_{0}=x+1$, $e_{w}=f+g-2x-1$, $e_{w+d}=x+1$, and $e_{i}=0$ for all other $i$.   
\end{enumerate}
\subparagraph*{Subcase 2: When $\lp \mid m-1$.}
In this case, $a \neq 0$, but $\lp \nmid 2m-3$. So $d=v_{\lp}(q^{2m-3}+1) =v_{\lp}(q+1)\leq v_{\lp}(q^{2}-1)=w$, with the equality holding if and only if $\lp \neq 2$.
So we have either $a \leq d < w+a < w+a+d$, or $d<a<w+a<w+a+d$.
\begin{enumerate}
\item If $a < d < w+a<w+a+d$, we have $\ov{M}_{d}\subset \ov{M}_{a}$. So by Theorem \ref{stoue} $\ov{M}_{a}\supset \ov{M}_{d}\supset C+\ov{V}_{s}=(U')^{\perp}$. Since $\ov{M}_{w+a} \supset \ov{V}_{r}$ and $d <w+a$, Theorem \ref{stoue} implies $\ov{M}_{d} \supset \ov{V}_{s}+\ov{V}_{r}=U^{\perp}$. We also have $\ov{M}_{w+a} \supset \ov{V}_{r}$ and $\ov{M}_{w+a+d}\supset U$. Thus we have $\dim(\ov{M}_{a})\geq \dim(U'^{\perp})=f+g+1-x$, $\dim(\ov{M}_{w})\geq \dim(U^{\perp})=f+g-x$, $\dim(\ov{M}_{w+a})\geq  f$, and $\dim(\ov{M}_{w+a+d}) \geq x+1$. 

Now,
\begin{align*}
a(f+g+1-x-(f+g-x))+d(f+g-x-f)+(w+d)(f-x-1)\\
+(w+a+d)(x+1) \\
= (w+a)f+dg  + d+a
=v_{\lp}(|S|).
\end{align*}
Thus by Lemma \ref{main}, setting $j=4$, $s_{1}=f+g+1-x$, $s_{2}=f+g-x$, $s_{3}=f$, $s_{4}=x+1$, $s_{5}=\dim(\ov{\ker(A)})=0$, $t_{1}=a$, $t_{2}=d$, $t_{3}=w+a$, and $t_{4}=w+a+d$, we conclude that $e_{0}=x$, $e_{a}=1$, $e_{d}=g-x$, $e_{w+a}=f-x-1$, $e_{w+a+d}=x+1$, and $e_{i}=0$ for all other $i$.
\item If $d<a<w+a<w+a+d$, by arguments similar to those above, we can show 
$\ov{M}_{d} \supset (U')^{\perp}$,  $\ov{M}_{a} \supset C$ $\ov{M}_{w+d} \supset \ov{V}_{r}$, and $\ov{M}_{w+a+d} \supset U$. Now by applying Lemma \ref{main} like above, we have $e_{0}=x$, $e_{a}=1$, $e_{d}=g-x$, $e_{w+a}=f-x-1$, $e_{w+a+d}=x+1$, and $e_{i}=0$ for all other $i$.
\item If $d=a<w+a<w+a+d$, by similar arguments, 
$\ov{M}_{a} \supset (U')^{\perp}$, $\ov{M}_{w+a} \supset \ov{V}_{r}$, and $\ov{M}_{w+a+d} \supset U$.
Now by applying Lemma \ref{main} like above, we can show that $e_{0}=x$, $e_{a}=e_{d}=g+1-x$, $e_{w+a}=f-x-1$, $e_{w+a+d}=x+1$, and $e_{i}=0$ for all other $i$.
\end{enumerate}

\paragraph{Elementary divisors of $K$.}
We identify $\ov{M}_{i}(L)$ with $\ov{M}_{i}$ and $\ov{L}_{\lp}$ with $\ov{L}$.

We set $v_{\lp}({m-1 \brack 1}_{q^{2}})=a$, $v_{\lp}(q^{2m-3}+1)=d$, and $v_{\lp}(q^{2m-1}+1)=c$. Then $v_{\lp}(k)=v_{\lp}(\mu)=a+d$, $v_{\lp}(v)=v_{\lp} \left({m \brack 1}_{q^{2}} (q^{2m-1}+1)\right)=c$. So $v_{\lp}(t)=a+c$, $v_{\lp}(u)=d$, and $v_{\lp}(|K|)=(a+c)f+dg-c$.

As $L(\mathbf{1})=0$, we have $\left\langle \mathbf{1} \right\rangle \subset \ov{M}_{i}$ for all $i$. 
Since $s$ is an eigenvalue of valuation $d$, Lemma \ref{eigenval} implies $\ov{M}_{d} \supset \ov{V}_{r}$. So by Theorem \ref{stoue}, we see that  $\ov{M}_{d} \supset \ov{V}_{s} \oplus \left\langle \mathbf{1} \right\rangle$.
Since $t$ is an eigenvalue of valuation $a+c$, Lemma \ref{eigenval} implies $\ov{V}_{r} \subset \ov{M}_{a+c}$.
Lemma \ref{subair1} gives us $C \subset \ov{M}_{a}$ and $U' \subset \ov{M}_{a+c+d}$. Thus by Theorem \ref{stoue}, $U'\oplus \left\langle \mathbf{1} \right\rangle = U\subset \ov{M}_{a+c+d}$.
\subparagraph*{Subcase 1: When $\lp \nmid m-1$.}
In this case, $a=0$, as $v_{\lp}({m-1 \brack 1}_{q^{2}})=0$. Thus $v_{\lp}(|K|)=cf+dg-c$. We apply Lemma \ref{eigenval}, and Theorem \ref{stoue} to conclude the following.
\begin{enumerate}
\item If $c<d<d+c$,
From the information we gathered above, we have 
$\ov{M}_{c} \supset \ov{V}_{r}$ and $\ov{M}_{c}\supset \ov{M}_{d} \supset \ov{V}_{s} \oplus \left\langle \mathbf{1} \right\rangle$. Applying Theorem \ref{stoue} gives us $\ov{M}_{c} \supset U^{\perp}$ and hence $\dim(\ov{M}_{c}) \geq f+g-x$.
We also have by Theorem \ref{stoue}, $\dim(\ov{M}_{d}) \geq g+1$ and $\dim(\ov{M}_{d+c}) \geq x+1$.

Now we have, $c(f+g-x-(g+1))+d(g+1-(x+1))+(c+d)(x+1-1)=cf+dg-c=v_{\lp}(|K|)$.
So by Lemma \ref{main}, setting $j=3$, $s_{1}=f+g-x$, $s_{2}=g+1$, $s_{3}=x+1$, $s_{4}=\dim(\ov{\ker(L)})=1$, $t_{1}=c$, $t_{2}=d$, and $t_{3}=c+d$, we have $e_{0}=x+1$, $e_{c}=f-x-1$, $e_{d}=g-x$, $e_{c+d}=x$, and $e_{i}=0$ for all other $i$.
\item If $d<c<d+c$,
By arguments similar to those above, we can show 
$\ov{M}_{d} \supset U^{\perp}$, and $\dim(\ov{M}_{d}) \geq f+g-x$;
$\ov{M}_{c} \supset \ov{V}_{r}$, and $\dim(\ov{M}_{c}) \geq f+1$; and
$\ov{M}_{d+c} \supset U$, and $\dim(\ov{M}_{d+c}) \geq x+1$.
By applying Lemma \ref{main} as above, we can show that $e_{0}=x+1$, $e_{c}=f-x-1$, $e_{d}=g-x$, $e_{c+d}=x$, and $e_{i}=0$ for all other non-zero $i$.
\item If $c=d<d+c$, by arguments similar to those above we can show $e_{0}=x+1$, $e_{c}=f+g-2x-1$, $e_{c+d}=x$, and $e_{i}=0$ for all other $i$.  
\end{enumerate}
\subparagraph*{Subcase 2: When $\lp \mid m-1$.}
 As $q \equiv -1 \pmod \lp$ and $\lp \mid m-1$, by the observations at the beginning of the subsection, we have $c=d$. We apply Lemma \ref{main} and Theorem \ref{stoue} to conclude the following.
  
\begin{enumerate}
\item Assume that $a< c=d < a+d < a+c+d$. As $C \subset \ov{M}_{a}$, and $\ov{V}_{s} \subset \ov{M}_{c} \subset \ov{M}_{d}$, by Theorem \ref{stoue} $\ov{M}_{d} \supset (U')^{\perp}$, and thus $\dim(\ov{M}_{d})\geq f+g+1-x$. Also by Theorem \ref{stoue}, since $\ov{V}_{r}\subset \ov{M}_{a+d} \subset \ov{M}_{c}$,  $\ov{M}_{c} \supset U^{\perp}$ and thus $\dim(\ov{M}_{c})\geq f+g-x$. We also have $U \subset \ov{M}_{a+c+d}, $ $\ov{V}_{r} \subset \ov{M}_{a+d}$, and thus $\dim(\ov{M}_{d+a}) \geq f$, and $\dim(\ov{M}_{a+c+d})\geq x+1$.

We have
$a(f+g+1-x-(f+g-x))+c(f+g-x-(f))+(a+d)(f-(x+1))+(a+c+d)(x+1-1)=(a+d)f+cg-(a+d)=v_{\lp}(|K|)$.
So by Lemma \ref{main}, setting $j=4$, $s_{1}=f+g+1-x$, $s_{2}=f+g-x$, $s_{3}=f$, $s_{4}=x+1$ $s_{5}=\dim(\ov{\ker(L)})=1$, $t_{1}=a$, $t_{2}=c$, $t_{3}=a+d$, and $t_{4}=a+c+d$,  we have $e_{0}=x$, $e_{a}=1$, $e_{c}=g-x$, $e_{a+d}=f-x-1$, $e_{a+c+d}=x$, and $e_{i}=0$ for all other non-zero $i$.  
\item Assume that $c=d<a < a+d < b+c+d$. Then by arguments similar to those above, $\ov{M}_{c} \supset C+\ov{V}_{s}=(U')^{\perp}$, $\ov{M}_{a} \supset C$, $\ov{M}_{a+d} \supset \ov{V}_{r}$, and $\ov{M}_{a+c+d} \supset U$. Now applying Lemma \ref{main} as above, we have $e_{0}=x$, $e_{a}=1$, $e_{c}=g-x$, $e_{a+d}=f-x-1$, $e_{a+c+d}=x$, and $e_{i}=0$ for all other non-zero $i$.  
\item If $c=d=a < a+d < a+c+d$, then by arguments similar to those above, we can show $e_{0}=x$, $e_{c}=g+1-x$, $e_{a+d}=f-x-1$, $e_{a+c+d}=x$, and $e_{i}=0$ for all other $i$.      
\end{enumerate}

\subsection{$\lp$-elementary divisors of $S$ and $K$ when $\lp \mid q+1$, and $\lp \mid m$.} 
\paragraph{Elementary divisors of $S$.}
We identify $\ov{M}_{i}(A)$ with $\ov{M}_{i}$ and $\ov{A}_{\lp}$ with $\ov{A}$.
In this case $\lp \nmid m-1$ and thus $\lp \nmid {m-1 \brack 1}_{q^{2}}$ and $v_{\lp}(q^{2m-2}-1)=v_{\lp}(q^{2}-1)$. As $\lp \nmid 2m-3$ and $\lp \nmid 2m-1$, we have $v_{\lp}(s)=v_{\lp}(q^{2m-3}+1)=v_{\lp}(q^{2m-1}+1)=v_{\lp}(q+1)\leq v_{\lp}(q^{2}-1)$.
 Set $w=v_{\lp}(q^{2}-1)=v_{\lp}(r)$ ,$v_{\lp}(q^{2m-3}+1)=v_{\lp}(s)=d$. 
 
We have $v_{\lp}(|S|)=wf+dg+d$, and $v_{\lp}(k)=v_{\lp}(\mu)=d$

Observe that $d \leq w < d+w$.
As $r$ is an eigenvalue of valuation $w \geq d$ and $s,k$ are eigenvalues of valuation $d$, by Lemma \ref{eigenval} and Theorem \ref{stoue}, \\
$\ov{M}_{d} \supset \ov{V}_{r}+\ov{V}_{s}=U^{\perp}$, and $\ov{M}_{w} \supset \ov{V}_{r} $. By Lemma \ref{subair1},
$U \subset \ov{M}_{w+d}$. Thus by Theorem \ref{stoue}, $\dim(\ov{M}_{d})\geq f+g-x$, $\dim(\ov{M}_{w}) \geq f$ and $\dim(\ov{M}_{w+d}) \geq x+1$.

We have $d(f+g-x-f)+w(f-(x+1))+(d+w)(x+1)=v_{\lp}(|S|)$. 
So by Lemma \ref{main}, setting $j=3$, $s_{1}=f+g-x$, $s_{2}=f$, $s_{3}=x+1$, $s_{4}=\dim(\ov{\ker(A)})=0$, $t_{1}=d$, $t_{2}=w$, and $t_{3}=w+d$, we have $e_{0}=x+1$, $e_{d}=g-x$, $e_{w}=f-x-1$, $e_{w+d}=x+1$, and $e_{i}=0$ for all other $i$.
\paragraph{Elementary divisors of $K$.}
We identify $\ov{M}_{i}(L)$ with $\ov{M}_{i}$ and $\ov{L}_{\lp}$ with $\ov{L}$.
Since $\lp \nmid m-1$, we have $\lp \nmid {m-1 \brack 1}_{q^{2}}$ and thus $v_{\lp}(q^{2m-2}-1)=v_{\lp}(q^{2}-1)$.  
As $\lp \nmid 2m-3$ and $\lp \nmid 2m-1$, we have $v_{\lp}(s)=v_{\lp}(q^{2m-3}+1)=v_{\lp}(q^{2m-1}+1)=v_{\lp}(q+1)\leq v_{\lp}(q^{2}-1)$. Set $v_{\lp}(q^{2m-1}+1)=d$ and $v_{\lp}\left({m \brack 1}_{q^{2}}\right)=b$.

We have $v_{\lp}(k)=v_{\lp}(\mu)=d$, $v_{\lp}(v)=b+d$, $v_{\lp}(t)=d$, $v_{\lp}(u)=d+b$, and $v_{\lp}(|K|)=df+(d+b)g-(b+d)$.

As $L(\mathbf{1})=0$, we have $\left\langle \mathbf{1} \right\rangle \subset \ov{M}_{i}$ for all $i$.
Since $t$ is an eigenvalue of valuation $d$, and $u$ is an eigenvalue of valuation $b+d$, by Lemma \ref{eigenval} and Theorem \ref{stoue}, we see that 
$\ov{M}_{d} \supset \ov{V}_{r}+\ov{V}_{s}=U^{\perp}$ and $\ov{M}_{d+b}\supset \ov{V}_{s}$.

By Lemma \ref{subair1}, we have $U \subset \ov{M}_{2d}$ and $U'\subset \ov{M}_{b+2d}$. 

We apply Lemma \ref{main} and Theorem \ref{stoue} we arrive at the following conclusions.

\begin{enumerate}
\item If $b<d$, $2d >b+d$, we have $\ov{M}_{b+d}\supset \ov{M}_{2d}+\ov{V}_{s} \supset U+\ov{V}_{s}$. Thus by Theorem \ref{stoue}, we see that $\ov{M}_{b+d} \supset M$ and thus $\dim(\ov{M}_{b+d}) \geq g+1$. Since $\ov{M}_{d} \supset U^{\perp}$, $\ov{M}_{2d} \supset \ov{U}$ and $\ov{M}_{b+2d}\supset U'$, Theorem \ref{stoue} implies $\dim(M)_{d}\geq f+g-x$ $\dim(\ov{M}_{2d}) \geq x+1$, and $\dim(\ov{M}_{b+2d})\geq x$.

Now, $d(f+g-x-(g+1))+(b+d)(g+1-(x+1))+2d(x+1-x)+(b+2d)(x-1)=v_{\lp}(|K|)$.
So by Lemma \ref{main}, setting $j=4$, $s_{1}=f+g-x$, $s_{2}=g+1$, $s_{3}=x+1$, $s_{4}=x$, $s_{5}=\dim(\ov{\ker(L)})=1$, $t_{1}=d$, $t_{2}=b+d$, $t_{3}=2d$, and $t_{4}=b+2d$, we have $e_{0}=x+1$, $e_{d}=f-x-1$, $e_{b+d}=g-x$, $e_{2d}=1$, $e_{b+2d}=x-1$, and $e_{i}=0$ for all other $i$.
\item If $b> d$, by similar arguments, $\ov{M}_{2d} \supset M$, $\ov{M}_{b+d} \supset \ov{V}_{s}$, $\ov{M}_{d} \supset U^{\perp}$, $\ov{M}_{2d} \supset \ov{U}$ and $\ov{M}_{b+2d}\supset U'$. Applying Lemma \ref{main} like in the above case,  we have $e_{0}=x+1$, $e_{d}=f-x-1$, $e_{b+d}=g-x$, $e_{2d}=1$, $e_{b+2d}=x-1$, and $e_{i}=0$ for all other $i$.
\item If $b=d$, by arguments similar to those above, we have $e_{0}=x+1$, $e_{b}=f-x-1$, $e_{b+d}=g-x+1$, $e_{d+2b}=x-1$, and $e_{i}=0$ for all other $i$.   
\end{enumerate}

\section{$\lp$-elementary divisors of $S$ and $K$ when $\Gamma(V)=\Gamma_{uo}(q,m)$, and $\lp \mid q+1$.}\label{last}
Given the graph $\Gamma_{uo}(q,m)$ and a prime $\lp$, table \ref{nilconst} shows that $\ov{A}_{\lp}$ (equivalently $\ov{L}_{\lp}$) is nilpotent if and only if $\lp \mid q+1$. In this section we compute the $\lp$-elementary divisors of $S$ and $K$ when $\Gamma(V)= \Gamma_{uo}(q,m)$ and $\lp \mid q+1$.

From now on in this section, we denote $\Gamma_{uo}(q,m)$ by $\Gamma_{uo}$, and $\lp$ is a prime that meets the description in the previous paragraph.
 We set $h=\frac{3}{2}$, and $z=m$ in Lemma \ref{rel} and Lemma \ref{par} to get parameters for this graph. Thus $\Gamma_{uo}(q,m)$ is an SRG with parameters 
$v={m \brack 1}_{q^{2}}(q^{2m+1}+1)$, $k= q^{2}{m-1 \brack 1}_{q^{2}}(q^{2m-1}+1)$, $\lambda=
(q^{2}-1)+q^{4}((q)^{2m-3}+1){m-2 \brack 1}_{q^{2}}$, and $\mu={m-1 \brack 1}_{q^{2}}(q^{2m-1}+1)$. 

The adjacency matrix $A$ has eigenvalues $(k,r,s)=(k,q^{2m-2}-1,-(1+q^{2m-1}))$
with multiplicities $(1,f,g)=\left(1, \dfrac{q^{3}{m \brack 1}_{q^{2}}(q^{2m-1}+1)}{q+1}, \dfrac{q^{2}{m-1 \brack 1}_{q^{2}}(q^{2m-2}-1)}{q-1} \right)$. 
So the Laplacian $L$ has eigenvalues $(0,t,u)=(0,k-r,k-s)=\left(0,{m-1 \brack 1}_{q^2}(1+q^{2m+1}), {m \brack 1}_{q^2}(1+q^{2m-1})\right)$
with multiplicities $(1,f,g)$. 
\subsection{Submodule structure}
We now recall from \S\ref{act} the definitions of $C$, $C'$ $U$, $U'$, $\ov{V}_{r}$ and $\ov{V}_{s}$ in the context of the graph $\Gamma_{uo}$. In this case $\G=\mathrm{U}(2m+1,q^2)$.
By Corollary $2.10$, Corollary $5.6$, and Proposition $5.14$ of \cite{ST}, we have the following result.
\begin{theorem}\label{stouo}
If $\lp \mid q+1$, the following are true.
\begin{enumerate}
\item If $\lp \nmid m$, then
$C \supset \ov{V}_{r} \supset U=\left\langle \mathbf{1} \right\rangle \oplus \ov{V}_{s} \supset \ov{V}_{s}=U'$.
\item If $\lp \mid m$, then
$C \supset \ov{V}_{r} \supset U \supset \ov{V}_{s}=U' \supset \left\langle \mathbf{1} \right\rangle$.
\item We have $\dim(C)=f+1$ and $\dim(U)=g+1$.
\end{enumerate}
\end{theorem}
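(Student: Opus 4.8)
The plan is to deduce the two chains and the dimension formulas directly from the submodule lattice of $\bb{F}_{\lp}\Po$ as an $\bb{F}_{\lp}\mathrm{U}(2m+1,q^{2})$-module, which is supplied by Corollary $2.10$, Corollary $5.6$ and Proposition $5.14$ of \cite{ST}; the real work is to match the intrinsically defined modules $C$, $\ov{V}_{r}$, $\ov{V}_{s}$, $U$, $U'$, and $\left\langle \mathbf{1}\right\rangle$ against the terms of that lattice. First I would record the arithmetic forced by $\lp\mid q+1$. Since $q\equiv -1\pmod{\lp}$ we have $q^{2}\equiv 1$, so $r=q^{2m-2}-1\equiv 0$ and $s=-(q^{2m-1}+1)\equiv 0\pmod{\lp}$; hence $\ov{A}_{\lp}$ is nilpotent, the spaces $\ov{V}_{r}$ and $\ov{V}_{s}$ are genuine $\bb{F}_{\lp}\G$-submodules, and by the purity observation in \S\ref{act} they have dimensions $f$ and $g$ respectively. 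The same congruence gives ${m \brack 1}_{q^{2}}\equiv m$ and ${m-1 \brack 1}_{q^{2}}\equiv m-1\pmod{\lp}$; the first of these is what makes $\lp\mid m$ versus $\lp\nmid m$ the only dichotomy visible in the lattice.

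Next I would carry out the identifications. Using $A[W]={m-1 \brack 1}_{q^{2}}\mathbf{1}+r[W]$ from the proof of Lemma \ref{subair1}, reduction modulo $\lp$ shows $\ov{A}(C)\subseteq\left\langle\mathbf{1}\right\rangle$ (whether or not $\lp\mid m-1$), which locates the span $C$ of the $[W]$ at the top of the \cite{ST} chain; combined with $\dim\ov{V}_{r}=f$ and the inclusion $C\supset\ov{V}_{r}$ this forces $\dim(C/\ov{V}_{r})=1$, hence $\dim C=f+1$. For $U=(J-\ov{A}_{\lp})(\bb{F}_{\lp}\Po)$ I would invoke the relation $A(A-\tfrac1q J-(r+s)I)=-rsI$ from the proof of Lemma \ref{subair1}(3); since $r+s\equiv rs\equiv 0\pmod{\lp}$ this confines $U$ to the lowest nontrivial layers and lets me identify $U$ and its difference version $U'$ with the correctly indexed terms of the chain, yielding $U'=\ov{V}_{s}$ and $\dim U=g+1$. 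The inclusions $C\supset\ov{V}_{r}\supset U\supset\ov{V}_{s}=U'$ are then read off from the cited lattice. Finally, the placement of $\left\langle\mathbf{1}\right\rangle$ — split off as $U=\left\langle\mathbf{1}\right\rangle\oplus\ov{V}_{s}$ when $\lp\nmid m$, versus sitting at the bottom $\ov{V}_{s}=U'\supset\left\langle\mathbf{1}\right\rangle$ when $\lp\mid m$ — is decided by whether $\mathbf{1}\in\ov{V}_{s}$, which by the trivial-constituent analysis of \cite{ST} is controlled precisely by $v_{\lp}\!\left({m \brack 1}_{q^{2}}\right)=v_{\lp}(m)$.

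I expect the main obstacle to be the bookkeeping of these identifications rather than any new idea: one must verify that the intrinsic modules $C$, $U$, $U'$ coincide with the correctly indexed terms of the \cite{ST} lattice and not with neighbouring terms of equal dimension, and in particular that $\left\langle\mathbf{1}\right\rangle$ splits off as a direct summand of $U$ exactly when $\lp\nmid m$. Settling this splitting versus non-splitting — an extension-class question disguised as the divisibility $\lp\mid m$ — is the delicate point; once the matching is fixed, the inclusions and the counts $\dim C=f+1$, $\dim U=g+1$ are immediate.
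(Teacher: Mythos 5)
Your proposal follows essentially the same route as the paper: Theorem \ref{stouo} is obtained by quoting Corollary 2.10, Corollary 5.6 and Proposition 5.14 of \cite{ST}, where the modules $C$, $U$, $U'$, $\ov{V}_{r}$, $\ov{V}_{s}$ are defined exactly as in \S\ref{act}, so the lattice, the identifications, and the dimension counts are all read off from the cited results. The additional matching steps you sketch (the computation of $\ov{A}$ on $C$ and on $U$, and the $\lp\mid m$ criterion for $\mathbf{1}\in\ov{V}_{s}$) are consistent with, and already contained in, those citations.
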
  
\subsection{Elementary divisors of $S$ and $K$, when $\lp \nmid m$, and $\lp \mid q+1$}
\paragraph{Elementary divisors of $S$.}
We identify $\ov{M}_{i}(A)$ with $\ov{M}_{i}$ and $\ov{A}_{\lp}$ with $\ov{A}$.
We set $v_{\lp}(q^{2}-1)=w$, $v_{\lp}({m-1 \brack 1}_{q^{2}})=a$ and $v_{\lp}(q^{2m-1}+1)=d$. So we have $v_{\lp}(r)=w+a$, $v_{\lp}(s)=d$, $v_{\lp}(k)=v_{\lp}(\mu)=a+d$, and $v_{\lp}(|S|)=(w+a)f+dg+a+d$.

We have $a<w+a<w+a+d$.
As $r$ is an eigenvalue of valuation $w+a$, by Lemma \ref{eigenval}, we have
$\ov{M}_{a+w} \supset \ov{V}_{r}$.
By Lemma \ref{subair1}, $\ov{M}_{a} \supset C$, and $\ov{M}_{w+a+d} \supset U$. Thus by Theorem \ref{stouo}, $\dim(\ov{M}_{a})\geq f+1$, $\dim(\ov{M}_{w+a}) \geq f$, and $\dim(\ov{M}_{w+a+d})\geq g+1$.

Now, $a(f+1-f)+(w+a)(f-(g+1))+(w+a+d)(g+1)=(w+a)f+dg+a+w=v_{\lp}(|S|)$.
So by Lemma \ref{main}, setting $j=3$, $s_{1}=f+1$, $s_{2}=f$, $s_{3}=g+1$, $s_{4}=\dim(\ov{\ker(A)})=0$, $t_{1}=a$, $t_{2}=w+a$, and $t_{3}=w+a+d$, we have $e_{0}=g$, $e_{a}=1$, $e_{w+a}=f-g-1$, $e_{w+a+d}=g+1$, and $e_{i}=0$ for all other $i$.
 
\paragraph{Elementary divisors of $K$.}
We identify $\ov{M}_{i}(L)$ with $\ov{M}_{i}$ and $\ov{L}_{\lp}$ with $\ov{L}$.
We set  $v_{\lp}({m-1 \brack 1}_{q^{2}})=a$, $v_{\lp}(q^{2m-1}+1)=c$, and $v_{\lp}(s)=v_{\lp}(q^{2m+1}+1)=d$ So we have $v_{\lp}(t)=a+d$, $v_{\lp}(u)=c$, $v_{\lp}(v)=d$, and $v_{\lp}(|K|)=(a+d)f+cg-d$.

By Lemma \ref{subair1} we have $C \subset\ov{M}_{a}$ and $U \subset \ov{M}_{a+d+c}$. As $t$ is an eigenvalue of valuation $a+d$ by Lemma \ref{eigenval}, we have
$\ov{M}_{a+d} \supset \ov{V}_{t}=\ov{V}_{r}$.
By Theorem \ref{stouo}, we have $\dim(\ov{M}_{a})\geq f+1$, $\dim(\ov{M}_{a+d})\geq f$, and $\dim(\ov{M}_{a+c+d})\geq g+1$.

Now $a(f+1-f)+(a+d)(f-(g+1))+(a+c+d)(g+1-1)=v_{\lp}(|K|)$.
So by Lemma \ref{main}, setting $j=3$, $s_{1}=f+1$, $s_{2}=f$, $s_{3}=g+1$, $s_{4}=\dim(\ov{\ker(L)})=1$, $t_{1}=a$, $t_{2}=a+b$, and $t_{3}=a+b+c$, we have $e_{0}=g$, $e_{a}=1$, $e_{d+a}=f-g-1$, $e_{d+a+c}=g$, and $e_{i}=0$ for all other $i$.

\subsection{Elementary divisors of $S$ and $K$, when $\lp \mid m$, and $\lp \mid q+1$}
\paragraph{Elementary divisors of $S$.}
We identify $\ov{M}_{i}(A)$ with $\ov{M}_{i}$ and $\ov{A}_{\lp}$ with $\ov{A}$.
As $\lp \mid m$ and $q \equiv -1 \pmod \lp$, we have 
$v_{\lp}(q^{2}-1)=v_{\lp}(r)$, and $v_{\lp}(s)=v_{\lp}(k)=$. Set $v_{\lp}(q^{2}-1)=w$ and $v_{\lp}(q^{2m-1}+1)=d$. We have $v_{\lp}(|S|)=wf+dg+d$.

As $r$ is an eigenvalue of valuation $w$, Lemma \ref{eigenval} implies $\ov{V}_{r} \subset \ov{M}_{w}$.
By Theorem \ref{subair1}, we have $\ov{M}_{w+d} \supset U$.
By Theorem \ref{stouo}, we have $\dim(\ov{M}_{w}) \geq f$, and $\dim(\ov{M}_{w+d})\geq g+1$.

Now, $w(f-(g+1))+(w+d)(g+1) = v_{\lp}(|S|)$.
So by Lemma \ref{main}, setting $j=2$, $s_{1}=f$, $s_{2}=g+1$, $s_{3}=\dim(\ov{\ker(A)})=0$, $t_{1}=w$, and $t_{2}=w+d$, we have $e_{0}=g+1$, $e_{w}=f-g-1$,  $e_{w+d}=g+1$, and $e_{i}=0$ for all other $i$.

\paragraph{Elementary divisors of $K$.}
We identify $\ov{M}_{i}(L)$ with $\ov{M}_{i}$ and $\ov{L}_{\lp}$ with $\ov{L}$.
As $\lp \mid m$, we have $v_{\lp}(q^{2m+1}+1)=v_{\lp}(q^{2m-1}+1)$. We set $v_{\lp}({m \brack 1}_{q^{2}})=b$ and $v_{\lp}(q^{2m+1}+1)=d$. We have $v_{\lp}(t)=d$, $v_{\lp}(u)=b+d$, $v_{\lp}(v)=b+d$, and $v_{\lp}(|K|)=df+(b+d)g-(b+d)$.

As $t$ is an eigenvalue of valuation $d$, we have $\ov{M}_{d} \supset \ov{V}_{r}$.
Lemma \ref{subair1} gives us $\ov{M}_{b+2d} \supset U'$ and $U \subset \ov{M}_{2d}$.
By Theorem \ref{stouo}, we have $\dim(\ov{M}_{d})\geq f$, $\dim(\ov{M}_{2d}) \geq g+1$, and $\dim(\ov{M}_{b+2d})\geq g$.

Now,
$d(f-(g+1))+(2d)(g+1-g)+(b+2d)(g-1)=v_{\lp}(|K|)$.
So by Lemma \ref{main}, setting $j=3$, $s_{1}=f$, $s_{2}=g+1$, $s_{3}=g$, $s_{4}=\dim(\ov{\ker(L)})=1$, $t_{1}=d$, $t_{2}=2d$, and $t_{3}=b+2d$, we have $e_{0}=g+1$, $e_{d}=f-g-1$, $e_{2d}=1$, $e_{b+2d}=g-1$, and $e_{i}=0$ for all other $i$. 

\section*{Acknowledgements} This work was partially supported by a grant from the Simons Foundation (\#204181 to Peter Sin). We thank the anonymous referees for valuable suggestions and comments.
\bibliographystyle{plain}
\bibliography{Polargraph} 
\end{document}